\newtheorem{theorem}{Theorem}[section]
\newtheorem{lemma}[theorem]{Lemma}
\newtheorem{corollary}[theorem]{Corollary}
\newtheorem{proposition}[theorem]{Proposition}
\newtheorem{remark}[theorem]{Remark}
\newtheorem{definition}[theorem]{Definition}
\numberwithin{equation}{section}
\newcommand{\cz}{{\mathbb C}}
\newcommand{\gz}{{\mathbb Z}}
\newcommand{\nz}{{\mathbb N}}
\newcommand{\rz}{{\mathbb R}}
\newcommand{\sz}{{\mathbb S}}
\newcommand{\bfA}{\mathbf{A}}
\newcommand{\bfB}{\mathbf{B}}
\newcommand{\bfT}{\mathbf{T}}
\newcommand{\bfGa}{\mathbf{\Gamma}}
\newcommand{\wtbfG}{\mathbf{\widetilde{\Gamma}}}
\newcommand{\wtbfS}{\mathbf{\widetilde{S}}}
\newcommand{\calH}{\mathcal{H}}
\newcommand{\scrL}{\mathscr{L}}
\newcommand{\scrS}{\mathscr{S}}
\newcommand{\dbar}{d\hspace*{-0.08em}\bar{}\hspace*{0.1em}}
\newcommand{\eps}{\varepsilon}
\newcommand{\forget}[1]{}
\newcommand{\Hom}{{\mathrm{hom}}}
\newcommand{\lra}{\longrightarrow}
\newcommand{\rpbar}{\overline{\rz}_+}
\newcommand{\spk}[1]{\langle#1\rangle}
\newcommand{\wh}{\widehat}
\newcommand{\wt}{\widetilde}
\begin{document}
%%%%%%%%%%%%%%%%%%%%%%%%%%%%%%%%%%%%%%%%
%%%%%%%%%%%%%%%%%%%%%%%%%%%%%%%%%%%%%%%%
\title[Resolvent trace asymptotics for operators in the Shubin class]{
Resolvent trace asymptotics for\\ operators in the Shubin class}

\author{J\"org Seiler}
\address{Dipartimento di Matematica, Universit\`{a} di Torino, Italy}
\email{joerg.seiler@unito.it}

\begin{abstract}
A new pseudodifferential calculus of Shubin type is introduced. 
The calculus contains operators depending on a non negative real parameter as well as operators independent of the parameter. 
%Elements in the calculus intrinsically possess a Grubb-Seeley type expansion. 
Resolvents of Shubin type pseudodifferential operators are constructed  and their 
trace expansion is obtained. 

\vspace*{5mm}

\noindent
\textbf{MSC (2020):} 47G30, 47L80, 47A10 

\noindent
\textbf{Keywords:} Pseudodifferential operators; parameter-ellipticity; resolvent; 
trace expansion; operators of Toeplitz type

%\vspace*{5mm}

%\textit{Dedicated to Professor Robert Denk in occasion of his 60th birthday.}

\forget{
\vspace*{5mm}

\noindent
Dedicated to Professor Robert Denk in occasion of his 60th birthday.
\begin{itemize}
\item[$\square$] his 70th birthday.
\item[$\square$] his liberation from the job as Editor-in-Chief for Mathematische Nachrichten.
%\item[$\square$] the completion of the last Ueblis for Mathematische Nachrichten.
\item[$\square$] his conquering of all mountain peaks up to 250m in the Himalaya region. 
\item[$\square$] his 1000th paid working day consisting in a lunch break with prior morning coffee break 
 and following afternoon coffee break. 
\item[$\square$] the publication of his 50th paper entirely written by his co-authors. 
\end{itemize}
Please tick (multiple choices permitted).
} 
\end{abstract}

\vspace*{-20mm}

\maketitle

\tableofcontents

%%%%%%%%%%%%%%%%%%%%%%%%%%%%%%%%%%%%%%%%
%%%%%%%%%%%%%%%%%%%%%%%%%%%%%%%%%%%%%%%%
\section{Introduction} 

In \cite{Shub71}, Shubin introduced a calculus of pseudodifferential operators on $\rz^n$ which permits to construct the 
parametrix of the harmonic oscillator $A=|x|^2-\Delta$, where $\Delta$ is the Laplacian on $\rz^n$. The calculus is build  
upon symbol-classes $\Gamma^d_{1,0}(\rz^{2n})$, $d\in\rz$, consisting of smooth functions $a(x,\xi)$ satisfying  
 $$|\partial^\alpha_z a(z)|\lesssim (1+|z|^2)^{(d-|\alpha|)/2},\qquad z:=(x,\xi),$$
for derivatives of arbitrary order, and corresponding subclasses $\Gamma^d(\rz^{2n})$ of poly-homogeneous symbols which, in addition, admit an asymptotic expansion into homogeneous components of (one-step) decreasing orders. Ellipticity means invertibility of the homogeneous principal symbol on the unit-sphere in $\rz^{2n}_z$; the harmonic oscillator is elliptic, since its homogeneous principal symbol is constant $1$ on the $z$ unit-sphere. 

There exists a significant amount of literature on Shubin type operators and their generalizations and applications. This includes spectral theory \cite{CGPR,Helffer,NicolaRodino,Shub78}, wave front sets and propagation of singularities \cite{MPT,RodinoWahlberg,SchulzWahlberg}, Fourier integral operators \cite{CSW,CGRV,Helffer}, and other arguments \cite{CdGN,deGo,EwSc}, just to name a very few. An extension of the Shubin calculus from Euclidean space to asymptotically conic manifolds has been developed in \cite{Krai}. 

Resolvents of \emph{differential} operators can be handled by means of  \emph{strongly} parameter-dependent symbols $a(z,\mu)$ satisfying 
 $$|\partial^\alpha_z \partial^j_\mu a(z,\mu)|\lesssim (1+|z|^2+\mu^2)^{(d-|\alpha|-j)/2},\qquad 
    (z,\mu)\in\rz^{2n}\times\rpbar,$$
and corresponding poly-homogeneous subclasses. Homogeneous components are in one-to-one correspondence with smooth functions on the unit-sphere in $\rz^{2n}\times\rpbar$. For example, $\mu^2+A$ with the harmonic oscillator $A$ is poly-homogeneous of order $d=2$ and $(\mu^2+A)^{-1}$ is strongly parameter-dependent of order $-2$.  

The use of strongly parameter-dependent symbols limits the resolvent construction to differential operators; however, it can be adapted to pseudodifferential operators, as has already been observed in the classical work \cite{Seel67} of Seeley on the complex powers of pseudodifferential operators on closed manifolds, see also Helffer's book  \cite{Helffer} for operators in the Shubin class. A systematic formulation in terms of a pseudodifferential calculus has been developed by Grubb in \cite{Grub} in the framework of H\"ormander classes, introducing the concept of \emph{regularity number}. The regularity number associated with a symbol measures how far or near it is from being strongly parameter-dependent. In \cite{Seil22} the author observed that the regularity number has an  interpretation in terms of a singularity structure of the involved homogeneous components. In the present paper we show that this interpretation extends to the Shubin class as well and allows us to construct a calculus of Shubin type with the following key properties: 
\begin{itemize}
 \item It contains both parameter-independent and strongly parameter-dependent Shubin classes; 
 \item ellipticity is characterized by the invertibility of two associated principal symbols (the homogeneous principal symbol 
  and the so-called \emph{principal limit symbol/operator}); 
 \item the parametrices to arbitrary elliptic elements (both differential and pseudodifferential) belong to the calculus and yield inverses 
 for large values of the parameter. 
\end{itemize}
The central idea is to work with classes of homogeneous symbols that are defined only on the punctured unit-sphere in $\rz^{2n}\times\rpbar$ with the ``north-pole'' $(z,\mu)=(0,1)$ removed; these symbols are singular in the north-pole in a controlled way, namely, when written in polar-coordinates centered in the north-pole, they admit a \emph{weighted Taylor expansion}. These classes yield a unified approach to both parameter-independent and parameter-dependent operators. 

As it turns out, homogeneous symbols in the new classes naturally  admit a second kind of symbol expansion which is closely related to the expansion used by Grubb and Seeley in their celebrated work \cite{GrSe} on resolvent trace expansions for boundary value problems with Atiyah-Patodi-Singer type boundary conditions. While the calculus of \cite{GrSe} has implemented an expansion of symbols in powers of the parameter $\mu$, in our approach arises an expansion in powers of $|(z,\mu)|$. This implies the existence of an expansion in the sense of Grubb-Seeley, cf. Theorem \ref{prop:first-expansion}, and yields another key feature of the calculus: 
\begin{itemize}
 \item Any element of order less than $-2n$ is $\mu$-wise of trace class in $L^2(\rz^n)$ and, for $\mu\to+\infty$, its trace admits 
  an expansion in decreasing powers of $\mu$, and decreasing powers of $\mu$ multiplied with $\log\mu$;   
\end{itemize}
the precise statement is given in Theorem \ref{thm:trace-expansion-01}. The resolvent trace expansion for elliptic pseudodifferential operators in the Shubin class is a special case, see Theorem \ref{thm:trace-expansion-02}. Recall that the resolvent trace expansion determines the pole structure of the $\zeta$-function and the short-time heat kernel asymptotics. Let us mention that the $\eta$-function of Shubin operators has been studied in \cite{Lope}. 

For simplicity of exposition, throughout the paper we shall focus on scalar-valued symbols; the extension to matrix-valued symbols does not cause essential new difficulties.  The matrix-valued calculus then allows to consider resolvents of Toeplitz-type operators, i.e., operators of the form $\bfA(\mu)=P_1A(\mu)P_0$ where $P_0$ and $P_1$ are zero-order idempotents in the Shubin class and $A(\mu)$ is from the new calculus. There is a natural notion of ellipticity for such Toeplitz operators and it holds:   
\begin{itemize}
 \item If $\bfA(\mu)$ is elliptic, there exists a parametrix of the form $\bfB(\mu)=P_0B(\mu)P_1$ such that 
  $\bfB(\mu)\bfA(\mu)=P_0$ and $\bfA(\mu)\bfB(\mu)=P_1$ for large values of $\mu$. 
\end{itemize}
This means that $\bfB(\mu)$ induces the inverse of the operator 
 $$P_1A(\mu): P_0(\calH^s_p)\lra P_1(\calH^s_p),\qquad s\in\rz,\;1<p<+\infty,$$
where $\calH^s_p$ represents here $L_p$-Sobolev spaces of Shubin-type, cf. \eqref{eq:spaces}, and $P_j(\calH^s_p)$ is the closed subspace  determined by the projection $P_j$. In order to keep the paper short, we will not discuss Toeplitz-operators in more detail, but refer the reader to \cite{Seil22}, where this was done in the context of operators from H\"ormander's class, see also \cite{Seil12,Seil15}. The adaption to the Shubin class is straight-forward. 

%%%%%%%%%%%%%%%%%%%%%%%%%%%%%%%%%%%%%%%%
%%%%%%%%%%%%%%%%%%%%%%%%%%%%%%%%%%%%%%%%
\section{Various classes of pseudodifferential symbols} 
\label{sec:02}

Let us recall a number of pseudodifferential symbol classes; essentially these classes 
have been introduced in \cite{Seil22,Seil24}, though we discuss here some additional 
aspects, in particular, in Sections \ref{sec:02.4} and \ref{sec:03}.  
Throughout the whole section $d$ and $\nu$ will denote real numbers. 

%%%%%%%%%%%%%%%%%%%%%%%%%%%%%%%%%%%%%%%%
\subsection{Hörmander classes} \label{sec:02.1}

We shall write $S^d_{1,0}(\rz^{m})$ for the vector space of all smooth  
$a(z):\rz^{m}\to\cz$ satisfying the uniform estimates
  $$|D^\alpha_z a(z)|\lesssim \spk{z}^{d-|\alpha|},\qquad \spk{z}:=(1+|z|^2)^{1/2},$$
for every multi-indices $\alpha\in\nz_0^{m}$. 
A symbol $a(z)\in S^d_{1,0}(\rz^{m})$ is called \emph{poly-homogeneous} 
if there exist smooth functions $a^{(d-\ell)}(z)$ defined for $z\not=0$ 
which are homogeneous of degree $d-\ell$ in $z$, i.e. 
  $$a^{(d-\ell)}(tz)=t^{d-\ell}\,a^{(d-\ell)}(z)\qquad\forall\;t>0,\;z\not=0,$$
such that 
 $$r_{a,N}(z):=a(z)-\chi(z)\sum_{\ell=0}^{N-1}a^{(d-\ell)}(z)\in 
   S^{d-N}_{1,0}(\rz^{m})$$
for every $N$, where $\chi(z)$ is an arbitrary fixed zero-excision function. 
$a^{(d)}(z)$ is the so-called homogeneous principal symbol of $a$. 
The space of poly-homogeneous symbols is denoted by $S^d(\rz^{m})$,  
the space of homogeneous symbols of degree $d$ by $S^d_\Hom(\rz^{m})$.

\begin{remark}\label{rem:topolgy}
$S^d_{1,0}(\rz^m)$ is a Fréchet space with topology induced by the semi-norms $\|a\|^{(\alpha)}_d=\sup_{z\in\rz^m}|\partial^\alpha a(z)|\spk{z}^{|\alpha|-d}$, $\alpha\in\nz_0^m$. Similarly for $S^d_\Hom(\rz^m)$, where in the supremum $z\not=0$ and $\spk{z}^{|\alpha|-d}$ is replaced by $|z|^{|\alpha|-d}$. The poly-homogeneous symbols of order $d$ then carry the projective topology induced by the maps $a\mapsto a^{(d-\ell)}: S^d(\rz^m)\to S^{d-\ell}_\Hom(\rz^m)$ and $a\mapsto r_{a,N}:S^d(\rz^m)\to S^{d-N}_{1,0}(\rz^m)$ with $\ell,N\in\nz_0$.  
\end{remark}

In an analogous way all symbol classes introduced below will be Frèchet spaces. 
We shall use this fact freely throughout the paper. 
Note that 
 $$S^{-\infty}(\rz^{m}):=\cap_{d\in\rz}S^d_{1,0}(\rz^{m})=\scrS(\rz^m)$$
coincides with the Schwartz space of rapidly decreasing functions on $\rz^m$. 

In the same way, replacing above $\rz^m$ by $\rz^{m}\times\rpbar$ and $z$ by $(z,\mu)$, we obtain 
the parameter-dependent Hörmander classes $S^d_{1,0}(\rz^{m}\times\rpbar)$, $S^d(\rz^{m}\times\rpbar)$
and $S^r_\Hom(\rz^{m}\times\rpbar)$, respectively.  Alternatively, 
 $$S^d_{1,0}(\rz^{m}\times\rpbar)=\Big\{a|_{\rz^{m}\times\rpbar}\mid a\in S^d_{1,0}(\rz^{m+1})\Big\}$$
and similar for the spaces of homogeneous and poly-homogeneous symbols. 
 
%%%%%%%%%%%%%%%%%%%%%%%%%%%%%%%%%%%%%%%%
\subsection{Symbols with regularity number} \label{sec:02.2}

The following definitions are motivated by \cite{Grub}. 
Let $\wt S^{d,\nu}_{1,0}(\rz^{m}\times\rpbar)$ denote the vector space of all smooth functions $a(z,\mu):\rz^{m}\times\rpbar\to\cz$ satisfying the uniform estimates
  $$|D^\alpha_z D^j_\mu a(z,\mu)|\lesssim \spk{z}^{\nu-|\alpha|}\spk{z,\mu}^{d-\nu-j}$$
for every multi-indices $\alpha\in\nz_0^{m}$ and every $j\in\nz_0$.  
%Note that 
% $$\wt S^{d-1,\nu-1}_{1,0}(\rz^{m}\times\rpbar)\cup\wt S^{d,\nu+1}_{1,0}(\rz^{m}\times\rpbar)
%     \subset \wt S^{d,\nu}_{1,0}(\rz^{m}\times\rpbar).$$
A function $a(z,\mu)$ belongs, by definition, to $\wt S^{d,\nu}_\Hom(\rz^{m}\times\rpbar)$ if it is defined 
and smooth for $z\not=0$,  
  $$a(tz,t\mu)=t^{d}\,a(z,\mu)\qquad\forall\;t>0,\;\mu\ge0,\;z\not=0,$$
and satisfies the uniform estimates  
  $$|D^\alpha_z D^j_\mu a(z,\mu)|\lesssim |z|^{\nu-|\alpha|}|(z,\mu)|^{d-\nu-j}$$
for derivatives of any order. 
A symbol $a(z,\mu)\in \wt S^{d,\nu}_{1,0}(\rz^{m})$ is called \emph{poly-homogeneous} 
if there exist $a^{(d-\ell,\nu-\ell)}(z,\mu)\in \wt S^{d-\ell,\nu-\ell}_\Hom(\rz^{m}\times\rpbar)$ such that 
 $$r_{a,N}(z):=a(z,\mu)-\chi(z)\sum_{\ell=0}^{N-1}a^{(d-\ell,\nu-\ell)}(z,\mu)\in 
    \wt S^{d-N,\nu-N}_{1,0}(\rz^{m}\times\rpbar)$$
for every $N$, where $\chi(z)$ is an arbitrary fixed zero-excision function (note that for the parameter-dependent H\"ormander classes an excision function $\chi(z,\mu)$ is used). The leading component $a^{(d,\nu)}$ is the so-called homogeneous principal symbol of $a$. The class of such poly-homogeneous symbols is denoted by $\wt S^{d,\nu}(\rz^{m}\times\rpbar)$. The class of smoothing symbols is 
 $$\wt S^{d-\infty,\nu-\infty}(\rz^{m}\times\rpbar):=\cap_{N\ge 0}\; \wt S^{d-N,\nu-N}_{1,0}(\rz^{m}\times\rpbar);$$
note that $r$ belongs to this class if and only if 
  $$|D^\alpha_z D^j_\mu r(z,\mu)|\lesssim \spk{z}^{-L}\spk{\mu}^{d-\nu-j}$$
for all $L\ge0$ and derivatives of any order, i.e., $r$ behaves in $z$ as a rapidly decreasing function while in $\mu$ as a Hörmander symbol of order $d-\nu$. For this reason we shall also write 
\begin{equation}\label{eq:remainder01}
 \wt S^{d-\infty,\nu-\infty}(\rz^{m}\times\rpbar)=S^{d-\nu}_{1,0}(\rpbar,S^{-\infty}(\rz^{m})).
\end{equation}
Let us also observe that 
 $$\wt S^{d-1,\nu-1}_{1,0}(\rz^{m}\times\rpbar)\cup
     \wt S_{1,0}^{d,\nu+1}(\rz^{m}\times\rpbar)
     \subset \wt S_{1,0}^{d,\nu}(\rz^{m}\times\rpbar)$$
and analogously for the subclasses of poly-homogeneous symbols. 
%%%%%%%%%%%%%%%%%%%%%%%%%%%%%%%%%%%%%%%%
\subsection{Symbols with expansion at infinity} \label{sec:02.3}

Let $\wtbfS^{d,\nu}_{1,0}(\rz^{m}\times\rpbar)$ denote the subspace of 
$\wt S^{d,\nu}_{1,0}(\rz^{m}\times\rpbar)$ consisting of those symbols $a$ for which esists a 
sequence of symbols $a^\infty_{[d,\nu+j]}\in S^{\nu+j}_{1,0}(\rz^{m})$ such that 
\begin{equation}\label{eq:expansion01}
 a(z,\mu)-\sum_{j=0}^{N-1} a^\infty_{[d,\nu+j]}(z)\,[z,\mu]^{d-\nu-j}\in 
 \wt S^{d,\nu+N}_{1,0}(\rz^{m}\times\rpbar)
\end{equation}
for every choice of $N$. In \eqref{eq:expansion01}, $[z,\mu]$ denotes a smooth positive function in $(z,\mu)$ which coincides with 
$|(z,\mu)|$ whenever $|(z,\mu)|\ge1$. 
In particular, $[z,\mu]^\rho\in S^\rho(\rz^m\times\rpbar)$ for arbitrary $\rho$. 
The symbols $a^\infty_{[d,\nu+j]}$ are uniquely determined by $a$, in particular, 
\begin{equation*}
 a^\infty_{[d,\nu]}(z)=\lim_{\mu\to+\infty}[z,\mu]^{\nu-d}a(z,\mu)
\end{equation*}
with convergence in $S^{\nu+1}_{1,0}(\rz^m)$. We shall call $a^\infty_{[d,\nu]}$ the \emph{principal limit symbol} of $a$. 

As a simple example, observe that if $a(z)\in S^d_{1,0}(\rz^m)$ is independent of the parameter $\mu$, then $a\in \wtbfS^{d,d}_{1,0}(\rz^{m}\times\rpbar)$ and $a^\infty_{[d,d]}=a$, i.e., the expansion in \eqref{eq:expansion01} has only one term.

Note that if $a\in\wtbfS^{d,\nu}_{1,0}(\rz^{m}\times\rpbar)$ then 
$\partial^k_\mu\partial^\alpha_z a\in \wtbfS^{d-k-|\alpha|,\nu-|\alpha|}_{1,0}(\rz^{m}\times\rpbar)$ 
with 
 $$(\partial^k_\mu\partial^\alpha_z a)^\infty_{[d-k-|\alpha|,\nu-|\alpha|]}=
     (d-\nu)(d-\nu-1)\cdots(d-\nu-k+1)D^\alpha_z a^\infty_{[d,\nu]}.$$
Moreover, if $a_\ell\in\wtbfS^{d_\ell,\nu_\ell}_{1,0}(\rz^{m}\times\rpbar)$ for $\ell=0,1$ then 
$a_1a_0\in \wtbfS^{d_0+d_1,\nu_0+\nu_1}_{1,0}(\rz^{m}\times\rpbar)$ with 
 $$(a_1a_0)^\infty_{[d_0+d_1,\nu_0+\nu_1]}=a^\infty_{1,[d_1,\nu_1]}a^\infty_{0,[d_0,\nu_0]}.$$  

\begin{lemma}\label{lem:lower-order}
\begin{itemize}
 \item[a$)$] Let $\ell\in\nz$ and $a\in \wtbfS^{d,\nu}_{1,0}(\rz^{m}\times\rpbar)$. Then 
   $$a\in \wtbfS^{d,\nu+\ell}_{1,0}(\rz^{m}\times\rpbar)\iff a_{[d,\nu]}^\infty=a_{[d,\nu+1]}^\infty=\ldots=a_{[d,\nu+\ell-1]}^\infty=0.$$ 
 \item[b$)$] Let $a\in \wtbfS^{d-\ell,\nu-\ell}_{1,0}(\rz^{m}\times\rpbar)$ for some $\ell\in\nz$. 
   Then $a\in \wtbfS^{d,\nu}_{1,0}(\rz^{m}\times\rpbar)$ and  
     $$a^\infty_{[d,\nu+j]}=a^\infty_{[d-\ell,\nu-\ell+j]}\in S^{\nu-\ell+j}(\rz^m)\qquad\forall\;j\in\nz_0.$$
\end{itemize}
\end{lemma}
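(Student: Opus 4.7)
The plan is to reduce both parts to uniqueness of the expansion in \eqref{eq:expansion01}, combined with the elementary fact that for $\ell \in \nz$ one has the inclusions $\wt S^{d-\ell,\nu-\ell+N}_{1,0}(\rz^m\times\rpbar)\subset \wt S^{d,\nu+N}_{1,0}(\rz^m\times\rpbar)$ and $S^{\nu-\ell+j}_{1,0}(\rz^m)\subset S^{\nu+j}_{1,0}(\rz^m)$, both of which are read off directly from $\spk{z}\ge 1$ and the exponent identity $(d-\ell)-(\nu-\ell)-j = d-\nu-j$.

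I would begin by establishing uniqueness of the coefficients in \eqref{eq:expansion01}. Suppose $a$ admits two expansions with coefficients $\{a^\infty_{[d,\nu+j]}\}$ and $\{\tilde a^\infty_{[d,\nu+j]}\}$ in $S^{\nu+j}_{1,0}(\rz^m)$. Subtracting, the difference $\sum_{j=0}^{N-1} c_j(z)[z,\mu]^{d-\nu-j}$ with $c_j:=\tilde a^\infty_{[d,\nu+j]}-a^\infty_{[d,\nu+j]}$ lies in $\wt S^{d,\nu+N}_{1,0}$ for every $N$. Evaluating at fixed $z$ and letting $\mu\to+\infty$, the leading term forces $|c_0(z)|\,\mu^{d-\nu}\lesssim_z \mu^{d-\nu-1}$, hence $c_0\equiv 0$; iterating on $j$ gives $c_j\equiv 0$ for all $j$, as in the pointwise-limit characterisation of $a^\infty_{[d,\nu]}$ recalled in Section~\ref{sec:02.3}.

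For part (a), forward direction, assume $a\in\wtbfS^{d,\nu+\ell}_{1,0}$ with coefficients $b^\infty_{[d,\nu+\ell+k]}\in S^{\nu+\ell+k}_{1,0}$. For $N\ge\ell$, re-indexing yields
\[
a(z,\mu)-\sum_{k=0}^{N-\ell-1} b^\infty_{[d,\nu+\ell+k]}(z)\,[z,\mu]^{d-\nu-\ell-k}\in \wt S^{d,\nu+N}_{1,0}(\rz^m\times\rpbar),
\]
which is a $(d,\nu)$-expansion with first $\ell$ coefficients set to zero; since $a\in\wtbfS^{d,\nu}_{1,0}$ also has a genuine $(d,\nu)$-expansion, the uniqueness step forces $a^\infty_{[d,\nu]}=\ldots=a^\infty_{[d,\nu+\ell-1]}=0$ (and $a^\infty_{[d,\nu+\ell+k]}=b^\infty_{[d,\nu+\ell+k]}$). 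Conversely, if those first $\ell$ coefficients vanish, then for every $N'\in\nz$ the defining property \eqref{eq:expansion01} with $N=N'+\ell$ reads, after shifting the summation index,
\[
a(z,\mu)-\sum_{k=0}^{N'-1} a^\infty_{[d,\nu+\ell+k]}(z)\,[z,\mu]^{d-(\nu+\ell)-k}\in \wt S^{d,(\nu+\ell)+N'}_{1,0}(\rz^m\times\rpbar),
\]
with coefficients in $S^{(\nu+\ell)+k}_{1,0}$, which is exactly membership in $\wtbfS^{d,\nu+\ell}_{1,0}$.

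For part (b), the expansion of $a\in\wtbfS^{d-\ell,\nu-\ell}_{1,0}$ reads
\[
a(z,\mu)-\sum_{j=0}^{N-1} a^\infty_{[d-\ell,\nu-\ell+j]}(z)\,[z,\mu]^{d-\nu-j}\in \wt S^{d-\ell,\nu-\ell+N}_{1,0}(\rz^m\times\rpbar),
\]
using $(d-\ell)-(\nu-\ell)-j=d-\nu-j$. By the two embeddings noted at the start, the remainder lies in $\wt S^{d,\nu+N}_{1,0}$ and each coefficient lies in $S^{\nu+j}_{1,0}$, so this is itself a valid $(d,\nu)$-expansion; uniqueness then gives $a^\infty_{[d,\nu+j]}=a^\infty_{[d-\ell,\nu-\ell+j]}$. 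The only real obstacle is setting up uniqueness cleanly; once that is done, both statements are purely bookkeeping of the exponents $d-\nu-j$.
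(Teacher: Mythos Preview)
Your proof is correct and follows the same approach as the paper, which dismisses part a) as ``obvious by definition of the symbol classes'' and for b) simply notes the inclusion $\wt S^{d-\ell,\nu-\ell+N}_{1,0}\subset \wt S^{d,\nu+N}_{1,0}$. You have merely made explicit the uniqueness of the coefficients in \eqref{eq:expansion01}, which the paper states separately (just after that equation) and then uses without further comment.
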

\begin{proof}
a) is obvious by definition of the symbol classes. b) is valid since  
 $$a(z,\mu)-\sum_{j=0}^{N-1} a^\infty_{[d-\ell,\nu-\ell+j]}(z)[z,\mu]^{d-\nu-j}\in \wt S^{d-\ell,\nu-\ell+N}_{1,0}(\rz^{m}\times\rpbar)$$
%    \subset \wt S^{d,\nu+N}_{1,0}(\rz^{m}\times\rpbar)$$
and $\wt S^{d-\ell,\nu-\ell+N}_{1,0}(\rz^{m}\times\rpbar)    \subset \wt S^{d,\nu+N}_{1,0}(\rz^{m}\times\rpbar)$ for every $N$. 
\end{proof}

The following result on asymptotic summation is \cite[Theorem 5.5]{Seil22} (in the special case of $x$-independent symbols). 

\begin{theorem}\label{thm:asymptotic-summation}
Let $a_\ell\in\wtbfS^{d-\ell,\nu-\ell}_{1,0}(\rz^m\times\rpbar)$, $\ell\in\nz_0$. 
Then there exists a symbol $a\in\wtbfS^{d,\nu}_{1,0}(\rz^m\times\rpbar)$ such that, for every $N$, 
 $$a-\sum_{\ell=0}^{N-1}a_\ell\in \wtbfS^{d-N,\nu-N}_{1,0}(\rz^m\times\rpbar).$$ 
Moreover, 
$a_{[d,\nu+j]}^\infty\sim\sum\limits_{\ell=0}^{+\infty}a_{\ell,[d-\ell,\nu-\ell+j]}^\infty$
asymptotically in $S^{\nu+j}_{1,0}(\rz^m)$ for every $j$. The symbol $a$ is unique 
modulo $\wtbfS^{d-\infty,\nu-\infty}(\rz^m\times\rpbar)$. 
\end{theorem}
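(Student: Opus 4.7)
The plan is to perform a Borel-type asymptotic summation in the Fréchet space $\wtbfS^{d,\nu}_{1,0}(\rz^m\times\rpbar)$, then derive the limit-symbol expansion directly from Lemma \ref{lem:lower-order}, and finally deduce uniqueness from the definition of the smoothing class. Fix a smooth cutoff $\chi\colon\rpbar\to[0,1]$ with $\chi(r)=1$ for $r\le 1$ and $\chi(r)=0$ for $r\ge2$. For a sequence $c_\ell\to+\infty$ to be chosen, set $\widetilde a_\ell(z,\mu):=(1-\chi(|z|/c_\ell))\,a_\ell(z,\mu)$. The difference $a_\ell-\widetilde a_\ell=\chi(|z|/c_\ell)\,a_\ell$ is compactly supported in $z$ and, for fixed $z$, behaves in $\mu$ like a Hörmander symbol of order $d-\nu$; hence it lies in $\wt S^{d-\infty,\nu-\infty}(\rz^m\times\rpbar)$.

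The key semi-norm estimate is that, for every $\ell\ge N$ and every $\alpha,j$,
\begin{equation*}
 |D^\alpha_z D^j_\mu \widetilde a_\ell(z,\mu)|\lesssim c_\ell^{\,N-\ell}\,\spk{z}^{\nu-N-|\alpha|}\spk{z,\mu}^{d-\nu-j},
\end{equation*}
which is exactly the basic semi-norm bound of $\wtbfS^{d-N,\nu-N}_{1,0}$ with a prefactor $c_\ell^{\,N-\ell}$. To see this, note that $\widetilde a_\ell$ is supported in $|z|\ge c_\ell$, where $\spk{z}\ge c_\ell/\sqrt{2}$, so the excess factor $\spk{z}^{N-\ell}$ one gains in passing from the $\wtbfS^{d-\ell,\nu-\ell}_{1,0}$ bound on $a_\ell$ to the target bound can be absorbed into $c_\ell^{\,N-\ell}$; terms involving derivatives of $1-\chi(|z|/c_\ell)$ produce factors $c_\ell^{-|\alpha_1|}$ on the shell $c_\ell\le|z|\le 2c_\ell$, and a Leibniz count shows these contributions are of the same form. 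Choosing $c_\ell\nearrow+\infty$ fast enough so that, for each $N$ and each of the countably many basic semi-norms of $\wtbfS^{d-N,\nu-N}_{1,0}$, the series $\sum_{\ell\ge N}c_\ell^{\,N-\ell}\cdot(\text{semi-norm of }a_\ell)$ converges, the tails $\sum_{\ell\ge N}\widetilde a_\ell$ converge in $\wtbfS^{d-N,\nu-N}_{1,0}$ and one sets $a:=\sum_{\ell}\widetilde a_\ell\in\wtbfS^{d,\nu}_{1,0}$. Combining with $a_\ell-\widetilde a_\ell\in\wt S^{d-\infty,\nu-\infty}$ yields $a-\sum_{\ell=0}^{N-1}a_\ell\in\wtbfS^{d-N,\nu-N}_{1,0}$ for every $N$.

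For the expansion of the principal limit symbols, set $r_N:=a-\sum_{\ell=0}^{N-1}a_\ell\in\wtbfS^{d-N,\nu-N}_{1,0}$. By Lemma \ref{lem:lower-order}b) applied to $r_N$ and to each $a_\ell$, we have $r_{N,[d,\nu+j]}^\infty=r_{N,[d-N,\nu-N+j]}^\infty\in S^{\nu-N+j}_{1,0}(\rz^m)$ and $a_{\ell,[d,\nu+j]}^\infty=a_{\ell,[d-\ell,\nu-\ell+j]}^\infty$. Taking the principal limit at index $[d,\nu+j]$ of the identity above yields
\begin{equation*}
 a^\infty_{[d,\nu+j]}-\sum_{\ell=0}^{N-1}a^\infty_{\ell,[d-\ell,\nu-\ell+j]}=r^\infty_{N,[d-N,\nu-N+j]}\in S^{\nu-N+j}_{1,0}(\rz^m),
\end{equation*}
which is precisely the asymptotic expansion in $S^{\nu+j}_{1,0}(\rz^m)$. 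Uniqueness is then immediate: if $a'$ is another symbol with the stated property, then $a-a'\in\wtbfS^{d-N,\nu-N}_{1,0}$ for every $N$, hence $a-a'\in\wtbfS^{d-\infty,\nu-\infty}(\rz^m\times\rpbar)$.

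The main obstacle I anticipate is isolating the correct form of the excision: it is not obvious a priori whether to cut in $|z|$, in $|\mu|$, or in $|(z,\mu)|$. The right choice is a cutoff in $|z|$, because the mismatch between the weights of $\wtbfS^{d-\ell,\nu-\ell}_{1,0}$ and $\wtbfS^{d-N,\nu-N}_{1,0}$ is a pure power of $\spk{z}$ (the $\spk{z,\mu}$-weight is unchanged). This is precisely what makes the shell estimate on derivatives of the cutoff absorb into the right Fréchet semi-norm; a cutoff in $|(z,\mu)|$ would lose this structure and fail to give summable bounds simultaneously in all orders.
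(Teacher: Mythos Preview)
The paper does not prove this theorem; it is quoted from \cite[Theorem 5.5]{Seil22}. Your Borel-type summation with a cutoff in $|z|$ is the natural approach, and your remark at the end correctly identifies why the excision must be in $|z|$ rather than in $|(z,\mu)|$: the weight discrepancy between $\wt S^{d-\ell,\nu-\ell}_{1,0}$ and $\wt S^{d-N,\nu-N}_{1,0}$ is a pure power of $\spk{z}$. The argument for the limit-symbol expansion and for uniqueness is clean and correct once the main construction is in place.

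There is, however, a genuine gap concerning the distinction between $\wt S$ and $\wtbfS$. You claim that $a_\ell-\widetilde a_\ell=\chi(|z|/c_\ell)a_\ell$ lies in $\wt S^{d-\infty,\nu-\infty}(\rz^m\times\rpbar)$ and then deduce $a-\sum_{\ell<N}a_\ell\in\wtbfS^{d-N,\nu-N}_{1,0}$. But $\wt S^{d-\infty,\nu-\infty}=S^{d-\nu}_{1,0}(\rpbar,S^{-\infty}(\rz^m))$ is \emph{not} contained in $\wtbfS^{d-N,\nu-N}_{1,0}$: a generic element has no reason to admit an expansion at infinity. What you actually need is $\chi(|z|/c_\ell)a_\ell\in\wtbfS^{d-\infty,\nu-\infty}=S^{d-\nu}(\rpbar,S^{-\infty}(\rz^m))$, cf.\ \eqref{eq:smoothing}, i.e.\ poly-homogeneity in $\mu$. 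This is true, but it requires the expansion-at-infinity of $a_\ell$: on the compact $z$-support one expands each $[z,\mu]^{d-\nu-j}$ in powers of $\mu$ and controls the remainder $\chi r_M\in S^{d-\nu-M}_{1,0}(\rpbar,S^{-\infty})$. A parallel issue occurs for the tail: the displayed ``key semi-norm estimate'' is only the $\wt S^{d-N,\nu-N}_{1,0}$ bound, whereas convergence in $\wtbfS^{d-N,\nu-N}_{1,0}$ also requires controlling the semi-norms on the expansion coefficients $(\widetilde a_\ell)^\infty_{[d-N,\nu-N+j]}=(1-\chi(|z|/c_\ell))\,a^\infty_{\ell,[d-\ell,\nu-\ell+j]}$ in $S^{\nu-N+j}_{1,0}$ and on the corresponding remainders in $\wt S^{d-N,\nu-N+M}_{1,0}$. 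The same $|z|$-support mechanism gives the $c_\ell^{\,N-\ell}$ gain for all of these, so the fix is routine, but as written the passage from $\wt S$ to $\wtbfS$ is not justified.
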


It can be shown, see \cite[Proposition 3.6]{Seil24}, that 
\begin{equation}\label{eq:smoothing}
 \wtbfS^{d-\infty,\nu-\infty}(\rz^m\times\rpbar)=
 \cap_{N\ge 0}\; \wtbfS^{d-N,\nu-M}_{1,0}(\rz^m\times\rpbar)=
 S^{d-\nu}(\rpbar,S^{-\infty}(\rz^m)),
\end{equation}
i.e., a smoothing symbol behaves as a rapidly decreasing function in $z$ and as a \textit{poly-homogeneous} symbol of order $d-\nu$ in $\mu$, cf. \eqref{eq:remainder01}.

%%%%%%%%%%%%%%%%%%%%%%%%%%%%%%%%%%%%%%%%
\subsection{Poly-homogeneous symbols with expansion at infinity} \label{sec:02.4}

Let  
\begin{equation*}
 \wh\sz^m_+=\sz^m_+\setminus\{(0,1)\}
    =\{(z,\mu)\in\rz^m\times\rpbar\mid |(z,\mu)|=1,\;z\not=0\},
\end{equation*}
denote the unit-sphere in $\rz^m\times\rpbar$ with the ``north-pole'' $(0,1)$ removed. 

\begin{definition}
Let $C^{\infty,\nu}_\bfT(\wh\sz^m_+)$, $\nu\in\rz$, denote the vector space of all $\wh a\in C^{\infty}(\wh\sz^m_+)$ such that 
 $$(r,\phi)\mapsto r^{-\nu}\,\wh a\big(r\phi,\sqrt{1-r^2}\big):(0,1)_r\times\sz^{m-1}_\phi\lra\cz$$
extends to a smooth function on $[0,1)\times\sz^{m-1}$ $($smoothness including $r=0)$; 
 $$\wh a_{\spk\nu}(\phi):=\lim_{r\to0+}r^{-\nu}\,\wh a\big(r\phi,\sqrt{1-r^2}\big),\qquad\phi\in\sz^{m-1},$$
is called the \emph{angular symbol} of $\wh a$. 
\end{definition}

In the previous definition, $\sz^{m-1}$ is the unit-sphere in $\rz^{m}_z$. In other words, the function $|z|^{-\nu}\wh a(z,\mu)$ 
written in polar-coordinates around the north-pole $(0,1)$ admits a Taylor expansion centered in $r=0$. 

Note that if $\wh a_p(z,\mu)= |z|^{\nu+j}p(z/|z|)$ with $p\in C^\infty(\sz^{m-1})$, then 
$\wh a_p\in C^{\infty,\nu}_\bfT(\wh\sz^m_+)$, since $\wh a_p\big(r\phi,\sqrt{1-r^2}\big)=r^{\nu+j}p(\phi)$. This yields the following: 

\begin{lemma}\label{lem:Taylor}
Let $\wh a\in C^{\infty,\nu}_\bfT(\wh\sz^m_+)$. Then there exist uniquely determined symbols $p_j\in C^\infty(\sz^{m-1})$ such that
 $$\wh a(z,\mu)=\sum_{j=0}^{N-1} |z|^{\nu+j}p_j\Big(\frac{z}{|z|}\Big)+\wh r_N(z,\mu),\qquad \wh r_N\in C^{\infty,\nu+N}_\bfT(\wh\sz^m_+),$$
for every $N\in\nz$. Clearly, $p_0=\wh a_{\spk\nu}$. 
\end{lemma}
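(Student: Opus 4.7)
The plan is to read off the Taylor expansion directly from the defining smoothness property, then translate back to the coordinates $(z,\mu)$.

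First I introduce the auxiliary function
\begin{equation*}
 F(r,\phi):=r^{-\nu}\wh a\big(r\phi,\sqrt{1-r^2}\big),\qquad (r,\phi)\in(0,1)\times\sz^{m-1},
\end{equation*}
which by the very definition of $C^{\infty,\nu}_\bfT(\wh\sz^m_+)$ extends to a smooth function on $[0,1)\times\sz^{m-1}$. I then apply the usual Taylor expansion in the variable $r$ around $r=0$ to obtain
\begin{equation*}
 F(r,\phi)=\sum_{j=0}^{N-1}\frac{1}{j!}(\partial_r^j F)(0,\phi)\,r^j+r^N G_N(r,\phi),
\end{equation*}
with $G_N\in C^\infty([0,1)\times\sz^{m-1})$ given by the standard integral remainder. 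Setting $p_j(\phi):=\frac{1}{j!}(\partial_r^j F)(0,\phi)\in C^\infty(\sz^{m-1})$ and multiplying through by $r^\nu$ yields
\begin{equation*}
 \wh a\big(r\phi,\sqrt{1-r^2}\big)=\sum_{j=0}^{N-1}p_j(\phi)\,r^{\nu+j}+r^{\nu+N}G_N(r,\phi).
\end{equation*}

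Next I translate back. Every point of $\wh\sz^m_+$ is of the form $(r\phi,\sqrt{1-r^2})$ with $r=|z|\in(0,1]$ and $\phi=z/|z|\in\sz^{m-1}$, so the expansion reads
\begin{equation*}
 \wh a(z,\mu)=\sum_{j=0}^{N-1}|z|^{\nu+j}p_j\!\Big(\frac{z}{|z|}\Big)+\wh r_N(z,\mu),\qquad \wh r_N(z,\mu):=|z|^{\nu+N}G_N\!\Big(|z|,\frac{z}{|z|}\Big).
\end{equation*}
To verify $\wh r_N\in C^{\infty,\nu+N}_\bfT(\wh\sz^m_+)$ I simply compute $r^{-(\nu+N)}\wh r_N(r\phi,\sqrt{1-r^2})=G_N(r,\phi)$, which is smooth on $[0,1)\times\sz^{m-1}$ by construction; smoothness of $\wh r_N$ on $\wh\sz^m_+$ itself is automatic (it equals $\wh a$ minus a smooth finite sum).

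For uniqueness, suppose $\{p_j\}$ and $\{q_j\}$ both yield such an expansion. Subtracting the two representations, multiplying by $|z|^{-\nu}$ and passing to polar coordinates gives
\begin{equation*}
 \sum_{j=0}^{N-1}(p_j-q_j)(\phi)\,r^j=H(r,\phi)
\end{equation*}
with $H$ smooth on $[0,1)\times\sz^{m-1}$ and vanishing to order $N$ at $r=0$; successively differentiating in $r$ at $r=0$ forces $p_j=q_j$ for all $j<N$. Since $N$ was arbitrary, all coefficients are uniquely determined. The identification $p_0=\wh a_{\spk\nu}$ follows at once from comparing with the definition of the angular symbol. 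No single step is a real obstacle here; the only thing to be careful about is recording that the smoothness of $F$ is only claimed on $[0,1)$ (the north-pole $r=1$ is excluded), which is exactly where Taylor's theorem at $r=0$ requires smoothness anyway.
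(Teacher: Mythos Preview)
Your argument is correct and follows exactly the route the paper intends: the paper does not spell out a proof but simply notes beforehand that $|z|^{\nu+j}p(z/|z|)$ pulls back to $r^{\nu+j}p(\phi)$ in polar coordinates, so the lemma is nothing but the ordinary Taylor expansion of $F(r,\phi)=r^{-\nu}\wh a(r\phi,\sqrt{1-r^2})$ at $r=0$. One tiny slip of wording: the point $r=1$ is the equator $\mu=0$, not the north-pole (the north-pole is $r=0$); your mathematical point---that smoothness on $[0,1)$ suffices for Taylor at $r=0$, while smoothness of $\wh r_N$ at $r=1$ follows because it is $\wh a$ minus a finite smooth sum---is nonetheless right.
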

 
Homogeneous extension of degree $d$ leads to the classes $\wtbfS^{d,\nu}_\Hom(\rz^m\times\rpbar)$, consisting of all functions $a$ of the form 
 $$a(z,\mu)=|(z,\mu)|^d \;\wh a\Big(\frac{(z,\mu)}{|(z,\mu)|}\Big),\qquad 
    \wh a\in C^{\infty,\nu}_\bfT(\wh\sz^m_+).$$
Note that any such homogeneous function is, in general, only defined for $z\not=0$. 
The \emph{angular symbol} of $a$ is, by definition,  
\begin{equation}\label{eq:principal-angular-symbol}
   a_{\spk\nu}(z)=|z|^\nu\; \wh a_{\spk\nu}\Big(\frac{z}{|z|}\Big)\in S^\nu_\Hom(\rz^m).
\end{equation}
It can be shown that $\wtbfS^{d,\nu}_\Hom(\rz^m\times\rpbar)\subset\wt S^{d,\nu}_\Hom(\rz^m\times\rpbar)$, cf. \cite[Theorem 4.4]{Seil22}. If $\wh a_p(z,\mu)= |z|^{\nu+j}p(z/|z|)$ as above, its $d$-homogeneous extension is 
 $$a(z,\mu)=|z|^{\nu+j}p(z/|z|)|(z,\mu)|^{d-\nu-j}.$$ 
Together with  Lemma \ref{lem:Taylor} we thus obtain: 

\begin{proposition}\label{prop:homogeneous-limit-expansion}
Let $a\in\wtbfS^{d,\nu}_\Hom(\rz^m\times\rpbar)$. Then there exist uniquely determined symbols 
$a^\infty_{[d,\nu+j]}\in S^{\nu+j}_\Hom(\rz^m)$ such that 
 $$a(z,\mu)=\sum_{j=0}^{N-1} a^\infty_{[d,\nu+j]}(z)|(z,\mu)|^{d-\nu-j}
     +r_N(z,\mu),\qquad r_N\in \wt S^{d,\nu+N}_\Hom(\rz^m\times\rpbar),$$
for every $N\in\nz$. In particular, $\wh a_{\spk\nu}=a^\infty_{[d,\nu]}$. 
\end{proposition}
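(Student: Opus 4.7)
The plan is to reduce the statement to Lemma \ref{lem:Taylor} by restricting $a$ to the punctured sphere $\whsz^m_+$ and then extending each Taylor summand by $d$-homogeneity. By hypothesis, $a(z,\mu)=|(z,\mu)|^d\,\wh a\big((z,\mu)/|(z,\mu)|\big)$ with $\wh a\in C^{\infty,\nu}_\bfT(\whsz^m_+)$, and Lemma \ref{lem:Taylor} yields uniquely determined $p_j\in C^\infty(\sz^{m-1})$ such that
$$\wh a(z,\mu)=\sum_{j=0}^{N-1}|z|^{\nu+j}p_j(z/|z|)+\wh r_N(z,\mu),\qquad \wh r_N\in C^{\infty,\nu+N}_\bfT(\whsz^m_+).$$

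As observed just before the proposition, the $d$-homogeneous extension of the summand $\wh a_p(z,\mu)=|z|^{\nu+j}p(z/|z|)$ equals $|z|^{\nu+j}p(z/|z|)\,|(z,\mu)|^{d-\nu-j}$ (simply because $|(z,\mu)|^d\cdot(|z|/|(z,\mu)|)^{\nu+j}p(z/|z|)$ collapses to this expression). Defining
$$a^\infty_{[d,\nu+j]}(z):=|z|^{\nu+j}p_j(z/|z|),$$
which is smooth for $z\ne0$ and $(\nu+j)$-homogeneous in $z$, hence an element of $S^{\nu+j}_\Hom(\rz^m)$, and taking the $d$-homogeneous extension of the displayed Taylor identity, gives exactly the expansion in the proposition, with $r_N$ equal to the $d$-homogeneous extension of $\wh r_N$.

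It remains to place $r_N$ in $\wt S^{d,\nu+N}_\Hom(\rz^m\times\rpbar)$ and to prove uniqueness. The first is immediate from the definition of $\wtbfS^{d,\nu+N}_\Hom(\rz^m\times\rpbar)$, combined with the inclusion $\wtbfS^{d,\nu+N}_\Hom\subset\wt S^{d,\nu+N}_\Hom$ that is quoted from \cite[Thm.~4.4]{Seil22} in Section \ref{sec:02.4}. Uniqueness of the $a^\infty_{[d,\nu+j]}$ reduces to that of the Taylor coefficients $p_j$ in Lemma \ref{lem:Taylor} by restriction to $\sz^{m-1}$. Finally, the identity $\wh a_{\spk\nu}=a^\infty_{[d,\nu]}$ is just the $j=0$ assertion of Lemma \ref{lem:Taylor} expressed via the angular-symbol convention \eqref{eq:principal-angular-symbol}.

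The argument is essentially bookkeeping; the only conceivable subtlety is that the passage from the controlled singularity encoded in $C^{\infty,\nu}_\bfT(\whsz^m_+)$ to the weighted derivative estimates defining $\wt S^{d,\nu}_\Hom(\rz^m\times\rpbar)$ is nontrivial. This, however, is exactly the content of the already-cited inclusion, so the proof entails no genuine obstacle.
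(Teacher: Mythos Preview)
Your proof is correct and follows exactly the route the paper takes: the paper simply notes that the $d$-homogeneous extension of $|z|^{\nu+j}p(z/|z|)$ is $|z|^{\nu+j}p(z/|z|)\,|(z,\mu)|^{d-\nu-j}$ and then states that the proposition follows ``together with Lemma~\ref{lem:Taylor}'', leaving the details you have written out to the reader. One small imprecision: for uniqueness you should restrict to the punctured sphere $\whsz^m_+$ (not $\sz^{m-1}$), and then use that $r_N\in\wt S^{d,\nu+N}_\Hom$ forces $\wh r_N=O(|z|^{\nu+N})$ near the north pole, which pins down the Taylor coefficients $p_0,\ldots,p_{N-1}$.
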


Since the spaces of homogeneous symbols from H\"ormander's class $S^d_\Hom(\rz^m\times\rpbar)$ 
are in one-to-one correspondence with the functions that are smooth on the entire unit-sphere $\sz^m_+\subset\rz^m\times\rpbar$, 
it is obvious that  
 $$S^d_\Hom(\rz^m\times\rpbar)\subset \wtbfS^{d,0}_\Hom(\rz^m\times\rpbar)$$
and that the angular symbol of $a\in S^d_\Hom(\rz^m\times\rpbar)$ 
is just its value in $(0,1)$, i.e., $a_{\spk 0}(z)\equiv a(0,1)$. 

\begin{corollary}\label{cor:classical}
Let $a\in \wtbfS^{d,\nu}_\Hom(\rz^m\times\rpbar)$ and $\chi(z)$ be a zero-excision function. Then $\chi a\in \wtbfS^{d,\nu}_{1,0}(\rz^m\times\rpbar)$ with limit symbol 
$(\chi a)^\infty_{[d,\nu]}=\chi a_{\spk\nu}$. 
\end{corollary}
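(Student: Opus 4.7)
The plan is to reduce the statement to the expansion at infinity provided by Proposition \ref{prop:homogeneous-limit-expansion} and then verify directly the defining conditions \eqref{eq:expansion01} for membership in $\wtbfS^{d,\nu}_{1,0}(\rz^m\times\rpbar)$, namely, that the coefficients lie in $S^{\nu+j}_{1,0}(\rz^m)$ and that the remainder after $N$ terms lies in $\wt S^{d,\nu+N}_{1,0}(\rz^m\times\rpbar)$. The identification of the principal limit symbol will then be read off the $j=0$ term using $a^\infty_{[d,\nu]}=a_{\spk\nu}$ from the same proposition.

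First I would fix $N\in\nz$ and apply Proposition \ref{prop:homogeneous-limit-expansion} to write
$$a(z,\mu)=\sum_{j=0}^{N-1} a^\infty_{[d,\nu+j]}(z)\,|(z,\mu)|^{d-\nu-j}+r_N(z,\mu),$$
with $a^\infty_{[d,\nu+j]}\in S^{\nu+j}_\Hom(\rz^m)$ and $r_N\in\wt S^{d,\nu+N}_\Hom(\rz^m\times\rpbar)$. Since $\chi$ vanishes near $z=0$, on its support $|z|$ is bounded below, hence $|z|\asymp\spk z$; the homogeneous bounds for $a^\infty_{[d,\nu+j]}$ then translate into the classical $\spk z$-estimates, giving $\chi a^\infty_{[d,\nu+j]}\in S^{\nu+j}_{1,0}(\rz^m)$.

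Next I would replace $|(z,\mu)|^{d-\nu-j}$ by $[z,\mu]^{d-\nu-j}$. The difference vanishes when $|(z,\mu)|\ge 1$, and since $\chi(z)$ vanishes for $|z|$ small, the product $\chi(z)a^\infty_{[d,\nu+j]}(z)\big(|(z,\mu)|^{d-\nu-j}-[z,\mu]^{d-\nu-j}\big)$ is smooth and compactly supported in $\rz^m\times\rpbar$, hence lies in $\wtbfS^{-\infty,-\infty}(\rz^m\times\rpbar)$ by \eqref{eq:smoothing}. Collecting terms,
$$\chi a=\sum_{j=0}^{N-1} \chi(z)\,a^\infty_{[d,\nu+j]}(z)\,[z,\mu]^{d-\nu-j}+\chi\, r_N+s_N,$$
with $s_N$ smoothing.

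The main technical point I expect is to check that $\chi r_N\in\wt S^{d,\nu+N}_{1,0}(\rz^m\times\rpbar)$. Using Leibniz, $D^\alpha_z D^k_\mu(\chi r_N)$ splits into the principal term $\chi(z)\,(D^\alpha_z D^k_\mu r_N)(z,\mu)$ and mixed terms involving derivatives of $\chi$. The latter are compactly supported in $z$ and have bounded factors from $r_N$ on that compact set, so they satisfy any desired estimate. For the principal term, the homogeneity estimate
$$|D^\alpha_z D^k_\mu r_N(z,\mu)|\lesssim |z|^{\nu+N-|\alpha|}\,|(z,\mu)|^{d-\nu-N-k}$$
holds for $z\neq 0$, and on $\mathrm{supp}\,\chi$ we have $|z|\asymp\spk z$ and $|(z,\mu)|\asymp\spk{z,\mu}$, yielding the required $\spk z^{\nu+N-|\alpha|}\spk{z,\mu}^{d-\nu-N-k}$ bound. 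This completes the membership $\chi a\in\wtbfS^{d,\nu}_{1,0}(\rz^m\times\rpbar)$, and by uniqueness of the coefficients in \eqref{eq:expansion01} the principal limit symbol is $(\chi a)^\infty_{[d,\nu]}=\chi\,a^\infty_{[d,\nu]}=\chi\,a_{\spk\nu}$.
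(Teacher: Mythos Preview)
Your proof is correct and follows essentially the same approach as the paper's: both invoke Proposition~\ref{prop:homogeneous-limit-expansion}, then replace $|(z,\mu)|^{d-\nu-j}$ by $[z,\mu]^{d-\nu-j}$ at the cost of a compactly supported (hence smoothing) error term. The paper packages the compact-support argument slightly differently, introducing an auxiliary zero-excision function $\kappa(z,\mu)$ with $\kappa\chi=\chi$ so that $\chi(z)\big(\kappa(z,\mu)|(z,\mu)|^{d-\nu-j}-[z,\mu]^{d-\nu-j}\big)$ is manifestly smooth and compactly supported, and leaves the verification of $\chi r_N\in\wt S^{d,\nu+N}_{1,0}$ implicit; you spell this last point out explicitly, which is fine.
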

\begin{proof}
Let $a$ have an expansion as in Proposition \ref{prop:homogeneous-limit-expansion}. 
Choose a zero-excision function $\kappa(z,\mu)$ such that 
$\kappa(z,\mu)\chi(z)=\chi(z)$ on $\rz^m\times\rpbar$. Then 
\begin{align*}
 \chi(z)|(z,\mu)|^{d-\nu-j}=\chi(z)[z,\mu]^{d-\nu-j}
    +\chi(z)\big(\kappa(z,\mu)|(z,\mu)|^{d-\nu-j}-[z,\mu]^{d-\nu-j}\big).
\end{align*}
The second term on the right-hand side is compactly supported in $(z,\mu)$. This immediately yields the claim. 
\end{proof}

\begin{definition}\label{def:poly-homogeneous}
$\wtbfS^{d,\nu}(\rz^m\times\rpbar)$ is the subspace of $\wtbfS^{d,\nu}_{1,0}(\rz^m\times\rpbar)$ consisting of all symbols $a$ which admit a sequence of symbols  $a^{(d-j,\nu-j)}\in \wtbfS^{d-j,\nu-j}_\Hom(\rz^m\times\rpbar)$ such that, for every $N$, 
 $$a-\chi\sum_{j=0}^{N-1}a^{(d-j,\nu-j)}\in \wtbfS^{d-N,\nu-N}_{1,0}(\rz^m\times\rpbar);$$
here $\chi(z)$ is an arbitrary zero-excision function. We call $a^{(d,\nu)}$ the homogeneous principal symbol of $a$, while $a_{\spk{\nu}}:=(a^{(d,\nu)})_{\spk{\nu}}$ is the \emph{principal angular symbol} of $a$. 
\end{definition}

\begin{proposition}[\text{\cite[Proposition 3.13]{Seil24}}]
If $a\in S^{d}(\rz^{m}\times\rpbar)$ then $a\in\wtbfS^{d,0}(\rz^{m}\times\rpbar)$ with $a_{[d,0]}(z)=a_{\spk 0}(z)=a^{(d)}(0,1)$, i.e., 
the value of the homogeneous principal symbol of $a$ in $(0,1)$. 
\end{proposition}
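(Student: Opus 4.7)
The plan is to verify the defining properties of $\wtbfS^{d,0}(\rz^m \times \rpbar)$ directly for $a \in S^d(\rz^m \times \rpbar)$, then read off the two leading symbols. The key observation motivating the whole approach is that a classical homogeneous H\"ormander symbol $b \in S^{d'}_\Hom(\rz^m \times \rpbar)$ lifts to a function $\wh b$ which is smooth on the \emph{entire} closed upper unit-sphere $\sz^m_+$, including the north pole $(0,1)$. In polar coordinates $(r,\phi)$ around $(0,1)$, the map $r \mapsto \wh b(r\phi, \sqrt{1-r^2})$ is smooth at $r=0$; multiplying by $r^{|\nu|}$ for any $\nu \leq 0$ only improves the order of vanishing, so $b \in \wtbfS^{d',\nu}_\Hom$ for every $\nu \leq 0$. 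Applied to the classical components $a^{(d-j)}$ of $a \sim \sum_j a^{(d-j)}$ with the choice $\nu = -j$, this places each $a^{(d-j)}$ in $\wtbfS^{d-j,-j}_\Hom$; the associated angular symbol equals $\wh a^{(d)}(0,1) = a^{(d)}(0,1)$ when $j=0$ and vanishes for $j \geq 1$ (the factor $r^j$ kills the Taylor tail at $r=0$).

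With this in hand, I would set $a^{(d-j,-j)} := a^{(d-j)}$, use Corollary \ref{cor:classical} to obtain $\chi a^{(d-j)} \in \wtbfS^{d-j,-j}_{1,0}$ for a $z$-zero-excision $\chi$, and then invoke Theorem \ref{thm:asymptotic-summation} to produce $\tilde a \in \wtbfS^{d,0}(\rz^m \times \rpbar)$ with $\tilde a - \chi \sum_{j<N} a^{(d-j)} \in \wtbfS^{d-N,-N}_{1,0}$ for every $N$. To conclude $a \in \wtbfS^{d,0}$ it remains to show $a - \tilde a$ is smoothing. I would match $\tilde a$ against the classical remainder $a - \chi_0 \sum_{j<N} a^{(d-j)} \in S^{d-N}_{1,0} \subset \wt S^{d-N,-N}_{1,0}$ (with $\chi_0$ a $(z,\mu)$-zero-excision) and analyze the cross term $(\chi_0 - \chi) \sum_{j<N} a^{(d-j)}$. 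This cross term is compactly supported in $z$; using $a^{(d-j)}(z,\mu) = \mu^{d-j} a^{(d-j)}(z/\mu, 1)$ together with Taylor expansion of $a^{(d-j)}(\cdot, 1)$ at the origin, it is poly-homogeneous in $\mu$ with Schwartz-in-$z$ coefficients, hence in $S^d(\rpbar, \scrS(\rz^m)) = \wtbfS^{d-\infty,-\infty}$ by \eqref{eq:smoothing}. So $a - \tilde a$ is $\wtbfS$-smoothing, and $a \in \wtbfS^{d,0}$ with the declared components.

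For the principal symbols I would close by direct computation. The principal angular symbol is $a_{\spk 0} = (a^{(d,0)})_{\spk 0} = a^{(d)}(0,1)$ by the discussion preceding Corollary \ref{cor:classical}. For the principal limit symbol, writing $a = \chi_0 \sum_{j<N} a^{(d-j)} + r_N$ with $r_N \in S^{d-N}_{1,0}$, one has for $|(z,\mu)| \geq 1$ that $[z,\mu]^{-d} a^{(d-j)}(z,\mu) = |(z,\mu)|^{-j}\, \wh a^{(d-j)}((z,\mu)/|(z,\mu)|)$, which as $\mu \to +\infty$ tends in $S^1_{1,0}(\rz^m)$ to $\wh a^{(d)}(0,1)$ when $j=0$ and to $0$ when $j \geq 1$, while $[z,\mu]^{-d} r_N$ is of order $-N$ and vanishes in the limit. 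Thus $a^\infty_{[d,0]} = a^{(d)}(0,1)$, matching the angular symbol. The hard part will be the bookkeeping around the cross term $(\chi_0 - \chi) \sum a^{(d-j)}$: one has to place it in the \emph{poly-homogeneous} smoothing class of \eqref{eq:smoothing} rather than the weaker class in \eqref{eq:remainder01}, and this requires tracing the $\mu$-poly-homogeneity from the classical $(z,\mu)$-poly-homogeneity of $a$ through the differing excision conventions.
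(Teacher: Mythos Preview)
The paper does not prove this proposition; it is cited from \cite{Seil24}. So there is no in-paper argument to compare against, and I can only assess your outline on its own terms.

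Your identification of the homogeneous components is correct: each $a^{(d-j)}\in S^{d-j}_\Hom(\rz^m\times\rpbar)$ restricts to a function smooth on the \emph{full} half-sphere $\sz^m_+$, hence lies in $C^{\infty,-j}_\bfT(\wh\sz^m_+)$ for every $j\ge 0$, so $a^{(d-j)}\in\wtbfS^{d-j,-j}_\Hom$ with angular symbol vanishing for $j\ge 1$ and equal to $a^{(d)}(0,1)$ for $j=0$. Your handling of the cross term $(\chi_0-\chi)\sum a^{(d-j)}$ is also correct: it is compactly supported in $z$, and the Taylor expansion of $a^{(d-j)}(\cdot,1)$ at the origin gives the $\mu$-poly-homogeneity, placing it in $S^d(\rpbar,\scrS(\rz^m))=\wtbfS^{d-\infty,-\infty}$.

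The gap is in the step ``so $a-\tilde a$ is $\wtbfS$-smoothing''. From your three ingredients you only get $a-\tilde a\in \wt S^{d-N,-N}_{1,0}$ for every $N$, hence $a-\tilde a\in S^d_{1,0}(\rpbar,\scrS)$ by \eqref{eq:remainder01}; this is the \emph{non}-poly-homogeneous smoothing class, and it is strictly weaker than $\wtbfS^{d-\infty,-\infty}=S^d(\rpbar,\scrS)$ of \eqref{eq:smoothing}. You recognise that an upgrade is needed, but you locate the difficulty in the cross term, which is in fact the one piece that is already poly-homogeneous. The obstruction comes from $r_N$ and from $\tilde a-\chi\sum_{j<N}a^{(d-j)}$: individually each is only in a bold class of finite order $N$, and intersecting over $N$ loses the boldface.

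The cleanest fix is to drop the asymptotic summation $\tilde a$ altogether and verify Definition~\ref{def:poly-homogeneous} directly for $a$. The point you are missing is the elementary inclusion $S^{d-N}_{1,0}(\rz^m\times\rpbar)\subset \wt S^{d,N}_{1,0}(\rz^m\times\rpbar)$ (check the estimates: $\spk{z,\mu}^{d-N-|\alpha|-j}\le \spk{z}^{N-|\alpha|}\spk{z,\mu}^{d-N-j}$ using $\spk{z,\mu}\ge\spk{z}$). Thus, for any $K$, choose $N\ge K$ and write $a=\chi\sum_{j<N}a^{(d-j)}+(\text{cross term})+r_N$; each $\chi a^{(d-j)}\in\wtbfS^{d-j,-j}_{1,0}\subset\wtbfS^{d,0}_{1,0}$ contributes its first $K$ limit-coefficients, the cross term is in $\wtbfS^{d-\infty,-\infty}$, and $r_N\in\wt S^{d,N}_{1,0}\subset\wt S^{d,K}_{1,0}$ is absorbed in the remainder. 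This gives $a\in\wtbfS^{d,0}_{1,0}$ directly, and the same decomposition (now with $K$ replaced by suitable indices) shows the remainders $a-\chi\sum_{j<N}a^{(d-j)}\in\wtbfS^{d-N,-N}_{1,0}$, since the cross term is already smoothing and the further components $a^{(d-N)},a^{(d-N-1)},\ldots$ of $r_N$ supply the required expansion at infinity for $r_N$. Your computation of the two principal symbols is then correct as written.
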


If $a\in \wtbfS^{d,\nu}(\rz^m\times\rpbar)$ is as in the previous definition then, by Corollary \ref{cor:classical} and Lemma \ref{lem:lower-order}.b), 
\begin{align}\label{eq:asymptotic-expansion}
 a_{[d,\nu]} \sim \sum_{j=0}^{+\infty}(\chi a^{(d-j,\nu-j)})_{[d-j,\nu-j]}
 \sim\chi \sum_{j=0}^{+\infty} (a^{(d-j,\nu-j)})_{\spk{\nu-j}}
\end{align}
asymptotically in $S^\nu_{1,0}(\rz^m)$. In particular, the principal limit symbol $a_{[d,\nu]}$ is poly-homogeneous and its 
homogeneous principal coincides with the principal angular symbol of $a$. 

\begin{lemma}\label{prop:lower-order}
Let $a\in\wtbfS^{d,\nu}(\rz^m\times\rpbar)$ such that $a^{(d,\nu)}=0$ and $a^\infty_{[d,\nu]}=0$, i.e., 
both homogeneous principal symbol and principal limit symbol vanish. Then $a\in\wtbfS^{d-1,\nu}(\rz^m\times\rpbar)$
\end{lemma}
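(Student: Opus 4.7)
My approach has three steps.

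\textbf{Step 1} uses only $a^{(d,\nu)}=0$. Re-indexing the homogeneous expansion of $a$ from Definition \ref{def:poly-homogeneous}, one immediately obtains, for every $N$,
$$a-\chi\sum_{k=0}^{N-1}a^{(d-1-k,\nu-1-k)}\in\wtbfS^{d-1-N,\nu-1-N}_{1,0}(\rz^m\times\rpbar),$$
so $a\in\wtbfS^{d-1,\nu-1}(\rz^m\times\rpbar)$ with homogeneous components $a^{(d-1-k,\nu-1-k)}\in\wtbfS^{d-1-k,\nu-1-k}_\Hom$.

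\textbf{Step 2} upgrades each homogeneous component to the regularity needed for $\wtbfS^{d-1,\nu}$. Inserting $a^{(d,\nu)}=0$ and $a^\infty_{[d,\nu]}=0$ into \eqref{eq:asymptotic-expansion} yields
$$\chi\sum_{j=1}^{N-1}(a^{(d-j,\nu-j)})_{\spk{\nu-j}}\in S^{\nu-N}_{1,0}(\rz^m),\qquad\forall\,N.$$
Because a nonzero symbol homogeneous of degree $\nu-j$ cannot, after zero-excision, lie in $S^{\nu-j-1}_{1,0}$, a straightforward induction on $j$ forces $(a^{(d-j,\nu-j)})_{\spk{\nu-j}}=0$ for every $j\geq 1$. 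Via the Taylor expansion in polar coordinates around the north pole (Lemma \ref{lem:Taylor}), vanishing of the $r^0$-term then promotes each $a^{(d-1-k,\nu-1-k)}$ from $\wtbfS^{d-1-k,\nu-1-k}_\Hom$ to $\wtbfS^{d-1-k,\nu-k}_\Hom$, giving exactly the bi-orders required by Definition \ref{def:poly-homogeneous}.

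\textbf{Step 3} handles the remainder. Setting $r_N:=a-\chi\sum_{k=0}^{N-1}a^{(d-1-k,\nu-1-k)}$, Step 1 gives $r_N\in\wtbfS^{d-1-N,\nu-1-N}_{1,0}$. By Lemma \ref{lem:lower-order}.a the claim $r_N\in\wtbfS^{d-1-N,\nu-N}_{1,0}$ reduces to $(r_N)^\infty_{[d-1-N,\nu-1-N]}=0$, and Lemma \ref{lem:lower-order}.b identifies this limit symbol with $(r_N)^\infty_{[d,\nu]}$. Corollary \ref{cor:classical} combined with the angular vanishing from Step 2 gives $(\chi a^{(d-1-k,\nu-1-k)})^\infty_{[d-1-k,\nu-1-k]}=0$, and a further application of Lemma \ref{lem:lower-order}.b yields $(\chi a^{(d-1-k,\nu-1-k)})^\infty_{[d,\nu]}=0$ for every $k$. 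Together with $a^\infty_{[d,\nu]}=0$ this gives $(r_N)^\infty_{[d,\nu]}=0$, completing the verification that $a\in\wtbfS^{d-1,\nu}$.

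The main obstacle I foresee is the bookkeeping across several parallel bi-indexings: one must consistently keep track of which class each $\chi a^{(d-1-k,\nu-1-k)}$ naturally belongs to versus which class it is being viewed in for the purpose of computing a limit symbol. Analytically, the lemmas already assembled do all the work.
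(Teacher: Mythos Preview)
Your proposal is correct and follows essentially the same route as the paper: re-index after dropping the vanishing top component, use \eqref{eq:asymptotic-expansion} together with $a^\infty_{[d,\nu]}=0$ to force all angular symbols $(a^{(d-j,\nu-j)})_{\spk{\nu-j}}$ to vanish and thereby promote each homogeneous component by one regularity step, and then handle the remainder via Lemma~\ref{lem:lower-order} by identifying $(r_N)^\infty_{[d-1-N,\nu-1-N]}$ with $(r_N)^\infty_{[d,\nu]}=0$. The paper's proof is organized the same way, only more tersely (it derives the vanishing of the angular symbols and of $(\chi a_j)^\infty_{[d,\nu]}$ in one breath from \eqref{eq:asymptotic-expansion} rather than via your explicit induction).
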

\begin{proof}
Let $a$ be as in Definition \ref{def:poly-homogeneous} and let $a_j:=a^{(d-j,\nu-j)}$ be the $(d-j)$-homogeneous extension of $\wh a_j\in C^{\infty,\nu-j}_\bfT(\wh\sz^m)$. Let $b:=a$ and $b_j:=a_{j+1}$ for $j\in\nz_0$. 

In view of \eqref{eq:asymptotic-expansion} and Lemma \ref{lem:lower-order}.b), $a^\infty_{[d,\nu]}=0$ implies both $\wh{a}_{j,\spk{\nu-j}}=0$ and $(\chi a_j)^\infty_{[d,\nu]}=0$ for every $j$. The first yields that $\wh a_j\in C^{\infty,\nu-j+1}(\wh\sz^{m}_+)$. Therefore,   $$b_j(z,\mu)=|(z,\mu)|^{d-1-j}\wh a_{j+1}\Big(\frac{(z,\mu)}{|(z,\mu)|}\Big)\in \wtbfS^{d-1-j,\nu-j}_\Hom (\rz^m\times\rpbar).$$
Moreover, since $a_0=0$,  
 $$b-\chi\sum_{j=0}^{N-1}b_j=a-\chi\sum_{j=0}^{N}a_j=:r_N\in \wtbfS^{d-N-1,\nu-N-1}(\rz^m\times\rpbar).$$
The second observation from above then implies that $r_{N,[d-N-1,\nu-N-1]}^\infty=r_{N,[d,\nu]}^\infty=0$, hence $r_N\in\wtbfS^{d-1-N,\nu-N}(\rz^m\times\rpbar)$ 
due to Lemma \ref{lem:lower-order}.a). Thus $a=b$ is as claimed.  
\end{proof}

\begin{remark}\label{rem:blow-up}
For another characterization of $C^{\infty,\nu}_\bfT(\wh\sz^m_+)$ let us consider  
  $$\mathbb{B}=\{(w,\mu)\in\rz^m\times\rpbar\mid (|w|-1)^2+\mu^2=1\},$$
obtained by ``blowing up'' $\wh\sz^m_+$ at the north-pole. 
$\mathbb{B}$ is a smooth manifold with boundary, with the two boundary components 
$\partial\mathbb{B}_j=\{(w,\mu)\in \mathbb{B}\mid \mu=j\}$, $j=0,1$. 
\begin{center}
\begin{tikzpicture}
\draw[dashed] (0,0) arc (0:180:1.2cm and 0.25cm);
\draw (-2.4,0) arc (180:360:1.2cm and 0.2cm);
\draw (0,0) arc (0:180:1.2cm);
\draw[dashed] (6,0) arc (0:180:1.8cm and 0.25cm);
\draw (2.4,0) arc (180:360:1.8cm and 0.2cm);
\draw (6,0) arc (0:74:1.25cm);
\draw (2.4,0) arc (180:106:1.25cm);
\draw[color=red!60, thick] (3.3,1.2) arc (180:360:0.9cm and 0.1cm);
\draw[color=red!60, thick] (5.1,1.2) arc (0:180:0.9cm and 0.15cm);
\draw[dashed] (-1.2,-0.2) -- (-1.2,1.2);
\draw[->] (-1.2,1.2) -- (-1.2,1.6);
\draw (-1.2,-0.2) -- (-1.2,-0.6);
\draw[dashed] (-2.4,0) -- (0,0);
\draw (-3.0,0) -- (-2.4,0);
\draw[->] (0,0) -- (0.6,0);
\draw[dashed] (4.2,-0.2) -- (4.2,1.0);
\draw[->] (4.2,1) -- (4.2,1.6);
\draw (4.2,-0.2) -- (4.2,-0.6);
\draw[dashed] (2.4,0) -- (6,0);
\draw (1.8,0) -- (2.4,0);
\draw[->] (6,0) -- (6.6,0);
\node at (0.6,-0.2) {$z$};
\node at (-0.97,1.6) {$\mu$};
\node at (6.6,-0.2) {$w$};
\node at (4.43,1.6) {$\mu$};
\filldraw [color=red!60] (-1.2,1.2) circle (1.7pt);
\end{tikzpicture}
\captionof{figure}{The punctured sphere $\wh\sz^m_+$ and the manifold $\mathbb{B}$}
\end{center}
The map $\chi(z,\mu)=\big(z+\frac{z}{|z|},\mu\big)$ induces a diffeomorphism between 
$\wh\sz^m_+$ and $\mathbb{B}\setminus \partial\mathbb{B}_1$. Then $\wh a$ belongs to $C^{\infty,\nu}_\bfT(\wh\sz^m_+)$ 
if and only if there exists $\wh a_b\in C^\infty(\mathbb{B})$ such that 
 $$|z|^{-\nu}\,\wh a(z,\mu)=(\wh a_b\circ\chi)(z,\mu),\qquad (z,\mu)\in\wh\sz^m_+.$$ 
In this case,  $\wh a_{\spk\nu}(\phi)=\wh a_b(\phi,1)$ for every $\phi\in\sz^{m-1}$. 
\end{remark}

The previous remark allows to easily obtain the following result: 

\begin{lemma}\label{lem:invertibility-homogenous}
A symbol $a\in \wtbfS^{d,\nu}_\Hom(\rz^m\times\rpbar)$ is pointwise invertible with inverse belonging to $\wtbfS^{-d,-\nu}_\Hom(\rz^m\times\rpbar)$ if and only if $a$ is pointwise invertible on $\wh\sz^m_+$ and the principal angular symbol is pointwise invertible on $\sz^{m-1}$. 
\end{lemma}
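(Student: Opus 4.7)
I would handle both implications by transferring the problem to the compact manifold-with-boundary $\mathbb{B}$ of Remark \ref{rem:blow-up}. Write $a(z,\mu)=|(z,\mu)|^d\,\wh a\bigl((z,\mu)/|(z,\mu)|\bigr)$ with $\wh a\in C^{\infty,\nu}_\bfT(\wh\sz^m_+)$, and let $\wh a_b\in C^\infty(\mathbb{B})$ be the (unique) smooth function with $|z|^{-\nu}\wh a=\wh a_b\circ\chi$ on $\wh\sz^m_+$, so that $\wh a_b(\phi,1)=\wh a_{\spk\nu}(\phi)$ on $\partial\mathbb{B}_1\cong\sz^{m-1}$.

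The key preliminary step is that the two pointwise invertibility conditions in the statement are jointly equivalent to $\wh a_b$ being nowhere zero on the whole of $\mathbb{B}$. On the open dense set $\mathbb{B}\setminus\partial\mathbb{B}_1=\chi(\wh\sz^m_+)$, $\wh a_b\circ\chi=|z|^{-\nu}\wh a$ vanishes precisely where $\wh a$ does, equivalently where $a$ does on $\wh\sz^m_+$ (note that the $\mu=0$ equator $\partial\mathbb{B}_0$ lies inside $\mathbb{B}\setminus\partial\mathbb{B}_1$, so it is covered here). On $\partial\mathbb{B}_1$, $\wh a_b(\phi,1)=\wh a_{\spk\nu}(\phi)$ vanishes exactly where $a_{\spk\nu}$ does on $\sz^{m-1}$, by \eqref{eq:principal-angular-symbol}.

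For the backward direction, compactness of $\mathbb{B}$ and nonvanishing of $\wh a_b$ give $1/\wh a_b\in C^\infty(\mathbb{B})$. Setting $\wh b:=1/\wh a$ on $\wh\sz^m_+$ yields $|z|^{\nu}\wh b=(1/\wh a_b)\circ\chi$, so by Remark \ref{rem:blow-up}, $\wh b\in C^{\infty,-\nu}_\bfT(\wh\sz^m_+)$; its $(-d)$-homogeneous extension is then the required inverse of $a$ in $\wtbfS^{-d,-\nu}_\Hom(\rz^m\times\rpbar)$. For the forward direction, given an inverse $b\in\wtbfS^{-d,-\nu}_\Hom(\rz^m\times\rpbar)$ with associated $\wh b_b\in C^\infty(\mathbb{B})$, the identity $ab=1$ on the common domain restricts on $\wh\sz^m_+$ to $\wh a\,\wh b=1$, hence $(\wh a_b\wh b_b)\circ\chi=1$ on $\wh\sz^m_+$, which translates to $\wh a_b\wh b_b=1$ on the dense set $\mathbb{B}\setminus\partial\mathbb{B}_1$ and extends by continuity to all of $\mathbb{B}$, forcing $\wh a_b$ to be nowhere zero.

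I do not anticipate a genuine obstacle: once the blow-up picture of Remark \ref{rem:blow-up} is in place, the whole argument reduces to the triviality that a smooth function on a compact manifold has a smooth reciprocal iff it is nowhere zero. The only care point is the correct matching of the two invertibility hypotheses with the two geometric pieces $\mathbb{B}\setminus\partial\mathbb{B}_1$ and $\partial\mathbb{B}_1$ of $\mathbb{B}$; in particular, remembering that $\partial\mathbb{B}_0$ is absorbed into the first piece so that only the $\nu$-angular condition is needed at $\partial\mathbb{B}_1$.
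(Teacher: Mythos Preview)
Your proposal is correct and follows exactly the approach the paper itself takes: reduce via Remark~\ref{rem:blow-up} to the observation that the two invertibility hypotheses are jointly equivalent to $\wh a_b$ being nowhere zero on the compact manifold $\mathbb{B}$, whence $\wh a_b^{-1}\in C^\infty(\mathbb{B})$. Your write-up is in fact more detailed than the paper's, which simply states this equivalence in one sentence without spelling out the forward/backward directions separately.
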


In fact, if $a$ is the $d$-homogeneous extension of $\wh a$ and $\wh a$ is related to $\wh a_b$ as described in Remark \ref{rem:blow-up}, the two properties in the lemma are just equivalent to the fact that $\wh a_b\in C^\infty(\mathbb{B})$ is pointwise everywhere invertible on $\mathbb{B}$ (including the boundary), hence $\wh a_b^{-1}$ exists and belongs to  $C^\infty(\mathbb{B})$.

%%%%%%%%%%%%%%%%%%%%%%%%%%%%%%%%%%%%%%%%
\section{Relation with a calculus of Grubb and Seeley}\label{sec:03}

In this section we establish a relationship with the symbol classes introduced  in \cite{GrSe} and derive an asymptotic expansion formula which is the core for the resolvent trace asymptotics in Section \ref{sec:05}. In the following we shall use the notation $m_+:=\max(m,0)$. 
Let us also introduce the polynomials  
\begin{equation}\label{eq:coefficient-polynomials}
 p_{\rho,\ell}(z)=\frac{1}{\ell!}\partial_t^\ell \spk{tz}^\rho\big|_{t=0}
\end{equation}
where $\rho\in\rz$ and $\ell\in\nz_0$; they are homogeneous of degree $\ell$ and vanish whenever $\ell$ is odd. Obviously $p_{\rho,0}=1$. 

\begin{lemma}\label{lem:exp}
Let $\rho\in\rz$ and $L\in\nz$. Then, for all $z\in\rz^m$ and all $\mu>0$, 
 $$|(z,\mu)|^\rho=\sum_{\ell=0}^{L-1}p_{\rho,\ell}(z)\mu^{\rho-\ell}+\gamma_{\rho,L}(z,\mu),$$
with $p_{\rho,\ell}$ as in $\eqref{eq:coefficient-polynomials}$ and  
 $$\gamma_{\rho,L}(z,\mu)= \sum_{|\alpha|=L} z^\alpha\int_0^1(1-\theta)^{L-1} q_{\rho,\alpha}(\theta z,\mu)\,d\theta$$
with certain symbols $q_{\rho,\alpha}(z,\mu)\in S^{\rho-|\alpha|}_\Hom(\rz^m\times\rpbar)$.  
In particular, $\gamma_{\rho,L}$ is homogeneous of degree $\rho$ on $\rz^m\times\rz_+$ and 
$|\gamma_{\rho,L}(z,\mu)|\lesssim \mu^{\rho-L}|z|^L$ on $\rz^m\times\rz_+$ whenever $L\ge\rho$. 
\end{lemma}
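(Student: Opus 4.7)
The plan is a one-variable Taylor expansion built on the identity $|(z,\mu)|^\rho=\mu^\rho\,\spk{z/\mu}^\rho$ for $\mu>0$, which converts the asserted expansion in powers of $\mu^{-1}$ into the Taylor expansion of $f(t):=\spk{tz}^\rho$ at $t=0$, evaluated at $t=1/\mu$. I would apply Taylor's formula with integral remainder, noting that the Taylor coefficient $f^{(\ell)}(0)/\ell!$ equals $p_{\rho,\ell}(z)$ by the definition \eqref{eq:coefficient-polynomials}, and that $p_{\rho,\ell}=0$ for odd $\ell$ is immediate from the evenness of $t\mapsto(1+t^2|z|^2)^{\rho/2}$. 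Expanding $f^{(L)}$ via the multinomial chain rule,
$$f^{(L)}(s)=L!\sum_{|\alpha|=L}\frac{z^\alpha}{\alpha!}\,(\partial_w^\alpha\spk{w}^\rho)(sz),$$
then substituting $t=1/\mu$ and multiplying through by $\mu^\rho$, I would read off the candidate
$$q_{\rho,\alpha}(z,\mu):=\frac{L}{\alpha!}\,\mu^{\rho-L}(\partial_w^\alpha\spk{w}^\rho)(z/\mu),\qquad|\alpha|=L,$$
so that $q_{\rho,\alpha}(\theta z,\mu)$ appears in the integrand exactly as required.

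The main obstacle I expect is verifying $q_{\rho,\alpha}\in S^{\rho-|\alpha|}_\Hom(\rz^m\times\rpbar)$, since the defining formula only makes sense a priori for $\mu>0$ and one must extend smoothly to the complement of the origin. The cleanest route is to $z$-differentiate the basic identity $|(z,\mu)|^\rho=\mu^\rho\,\spk{z/\mu}^\rho$, which produces $\mu^{\rho-|\alpha|}(\partial_w^\alpha\spk{w}^\rho)(z/\mu)=\partial_z^\alpha|(z,\mu)|^\rho$ and hence the clean rewriting
$$q_{\rho,\alpha}(z,\mu)=\frac{L}{\alpha!}\,\partial_z^\alpha|(z,\mu)|^\rho.$$
Since $|(z,\mu)|^\rho$ is smooth on $(\rz^m\times\rpbar)\setminus\{0\}$ and homogeneous of degree $\rho$ there, its $z$-derivatives of order $|\alpha|=L$ are smooth on the same set and homogeneous of degree $\rho-L$, placing $q_{\rho,\alpha}$ in $S^{\rho-L}_\Hom(\rz^m\times\rpbar)$.

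For the final assertions: homogeneity of $\gamma_{\rho,L}$ of degree $\rho$ on $\rz^m\times\rz_+$ is automatic, because $|(z,\mu)|^\rho$ and each $p_{\rho,\ell}(z)\mu^{\rho-\ell}$ (with $p_{\rho,\ell}$ homogeneous of degree $\ell$ in $z$) are $\rho$-homogeneous. The pointwise bound when $L\ge\rho$ follows by combining the elementary estimate $|q_{\rho,\alpha}(\theta z,\mu)|\lesssim|(\theta z,\mu)|^{\rho-L}\le\mu^{\rho-L}$ (using $\mu\le|(\theta z,\mu)|$ and $\rho-L\le 0$) with the trivial bound $|z^\alpha|\le|z|^L$ and the boundedness of the $\theta$-integral of $(1-\theta)^{L-1}$.
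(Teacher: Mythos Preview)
Your proposal is correct and follows essentially the same route as the paper: both start from the identity $|(z,\mu)|^\rho=\mu^\rho\spk{z/\mu}^\rho$ (the paper writes it as $|(z,t^{-1})|^\rho=t^{-\rho}\spk{tz}^\rho$ with $t=\mu^{-1}$), Taylor-expand $t\mapsto\spk{tz}^\rho$ with integral remainder, and identify the $L$-th $t$-derivative with $z$-derivatives of $|(z,\mu)|^\rho$ via the chain rule. Your explicit identification $q_{\rho,\alpha}=\frac{L}{\alpha!}\,\partial_z^\alpha|(z,\mu)|^\rho$ and the verification of the homogeneity and the bound for $\gamma_{\rho,L}$ are slightly more detailed than the paper, which leaves the constants $c_{\rho,\alpha}$ unspecified and omits the last two checks.
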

\begin{proof}
First note that  $|(z,t^{-1})|^{\rho}=t^{-\rho}|(tz,1)|^{\rho}=t^{-\rho}\spk{tz}^\rho$ for every $t>0$. 
Thus,  with certain constants $c_{\rho,\alpha}$, 
 $$\partial^k_t\spk{tz}^\rho=\sum_{|\alpha|=k}c_{\rho,\alpha}z^\alpha  |(tz,1)|^{\rho}_\alpha,\qquad  
     |(z,\mu)|^{\rho}_\alpha:=\partial^\alpha_z|(z,\mu)|^\rho.$$
Hence, by Taylor expansion and by the homogeneity of degree $\rho-|\alpha|$ of $|(z,\mu)|^{\rho}_\alpha$, 
 $$\spk{tz}^{\rho}=\sum_{\ell=0}^{L-1}
     p_{\rho,\ell}(z)t^{\ell}+\frac{1}{(L-1)!}t^{L}  r_{\rho,L}(t,z),$$
with 
\begin{align}\label{eq:remainder-term}
\begin{split}
 r_{\rho,L}(t,z)
     &=\sum_{|\alpha|=L}c_{\rho,\alpha}z^\alpha
    \int_0^1(1-\theta)^{L-1} |\theta t z,1|^\rho_\alpha\,d\theta\\
     &=\sum_{|\alpha|=L}c_{\rho,\alpha}z^\alpha
    \int_0^1(1-\theta)^{L-1} |\theta z,t^{-1}|^\rho_\alpha\,d\theta\,t^{\rho-L}. 
\end{split}
\end{align}
The claim follows by substituting $t$ with $\mu^{-1}$.
\end{proof}

The following result is an analogue of  \cite[Theorem 1.12]{GrSe}. 

 \begin{theorem}\label{prop:first-expansion}
Let $a\in \wtbfS^{d,\nu}_{1,0}(\rz^m\times\rpbar)$. 
% and set $p(t,z)=\big([z,\mu]^{\nu-d}a(z,\mu)\big)\big|_{\mu=1/t}$ for $t\in(0,1]$. 
Then there exist $a^\infty_{\{d,\nu+k\}}\in S^{\nu+k}_{1,0}(\rz^m)$ such that, for every $k\in\nz_0$, 
\begin{equation}\label{eq:second-limit-symbol}
 a^\infty_{\{d,\nu+k\}}(z)=\frac{1}{k!}\lim_{t\to 0^+}\partial^k_t \big(t^{d-\nu}a(z,t^{-1})\big)
\end{equation}
with convergence in $S^{\nu+r_k}_{1,0}(\rz^m)$ with $r_k=\max(d-\nu,k+1)$. Moerover, 
\begin{equation}\label{eq:limit02}
  \mu^{N-d+\nu}\Big[a(z,\mu)-\sum_{k=0}^{N-1} a^\infty_{\{d,\nu+k\}}(z)\mu^{d-\nu-k}\Big]\in S^{\nu+r_N}_{1,0}(\rz^m)
\end{equation}
uniformly in $\mu\ge1$ for every $N\in\nz$. In particular, $a^\infty_{\{d,\nu\}}=a^\infty_{[d,\nu]}$.
\end{theorem}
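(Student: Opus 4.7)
My plan is to start from the defining expansion at infinity \eqref{eq:expansion01} and substitute into each factor $[z,\mu]^{d-\nu-j}$ its Taylor expansion in $\mu^{-1}$ provided by Lemma \ref{lem:exp}. For $\mu\ge 1$ we have $[z,\mu]=|(z,\mu)|$, and applying the lemma with $L=N-j$ yields
\[
[z,\mu]^{d-\nu-j}=\sum_{\ell=0}^{N-j-1}p_{d-\nu-j,\ell}(z)\,\mu^{d-\nu-j-\ell}+\gamma_{d-\nu-j,N-j}(z,\mu).
\]
Collecting the resulting terms by powers of $\mu$ via $k=j+\ell$ suggests the definition
\[
a^\infty_{\{d,\nu+k\}}(z):=\sum_{j=0}^{k}a^\infty_{[d,\nu+j]}(z)\,p_{d-\nu-j,k-j}(z).
\]
Since $a^\infty_{[d,\nu+j]}\in S^{\nu+j}_{1,0}(\rz^m)$ and $p_{d-\nu-j,k-j}$ is a polynomial of degree $k-j$, membership in $S^{\nu+k}_{1,0}(\rz^m)$ is clear; the assertion $a^\infty_{\{d,\nu\}}=a^\infty_{[d,\nu]}$ is immediate from $p_{\rho,0}=1$.

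With this definition, the residual appearing in \eqref{eq:limit02} decomposes as
\[
\mathcal{R}_N(z,\mu):=\sum_{j=0}^{N-1}a^\infty_{[d,\nu+j]}(z)\,\gamma_{d-\nu-j,N-j}(z,\mu)+r_N(z,\mu),\qquad r_N\in\wt S^{d,\nu+N}_{1,0},
\]
and the substance of \eqref{eq:limit02} is that $\mu^{N-d+\nu}\mathcal{R}_N\in S^{\nu+r_N}_{1,0}(\rz^m)$ uniformly in $\mu\ge 1$. The integral representation of $\gamma_{\rho,L}$ in Lemma \ref{lem:exp}, together with $q_{\rho,\alpha}\in S^{\rho-L}_\Hom$, places $\gamma_{\rho,L}$ in $\wt S^{\rho,L}_\Hom$, so that
\[
|\partial^\alpha_z\gamma_{d-\nu-j,N-j}(z,\mu)|\lesssim|z|^{N-j-|\alpha|}|(z,\mu)|^{d-\nu-N}.
\]
Combining this via Leibniz with $|\partial^\alpha_z a^\infty_{[d,\nu+j]}|\lesssim\spk{z}^{\nu+j-|\alpha|}$ and with the analogous bound on $r_N$, then multiplying by $\mu^{N-d+\nu}$ and splitting into the regimes $|z|\le\mu$ and $|z|>\mu$, yields a uniform bound of order $\nu+\max(N,d-\nu)$ in $z$; this is \emph{a fortiori} of order $\nu+r_N$ with $r_N=\max(d-\nu,N+1)$.

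For \eqref{eq:second-limit-symbol} I would substitute $t=\mu^{-1}\in(0,1]$ and exploit the identity
\[
t^{d-\nu}[z,t^{-1}]^{d-\nu-j}=t^j\spk{tz}^{d-\nu-j},
\]
which exhibits the right-hand side as a smooth function of $t\in[0,1]$ valued in symbols on $\rz^m$, with Taylor series $\sum_\ell p_{d-\nu-j,\ell}(z)t^{j+\ell}$. Reading \eqref{eq:limit02} back (after multiplication by $t^N$) as
\[
t^{d-\nu}a(z,t^{-1})=\sum_{k=0}^{N-1}a^\infty_{\{d,\nu+k\}}(z)\,t^k+t^N g_N(z,t),
\]
with $g_N$ bounded in $S^{\nu+r_N}_{1,0}(\rz^m)$ uniformly in $t\in(0,1]$, applying with $N=k+1$ and differentiating $k$ times in $t$ isolates $k!\,a^\infty_{\{d,\nu+k\}}(z)$; the $t$-derivatives of the remainder $t^{k+1}g_{k+1}$ produce only terms $t^{k+1-i}\partial_t^{k-i}g_{k+1}$ with $i\le k$, each carrying a positive power of $t$, and the $t$-derivatives of $g_{k+1}$ are controlled by applying the same Taylor analysis to $\partial_\mu^\ell a\in\wtbfS^{d-\ell,\nu}_{1,0}$ (via the differentiation rule recorded in Section \ref{sec:02.3}), yielding the asserted convergence in $S^{\nu+r_k}_{1,0}(\rz^m)$.

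The main obstacle I expect is the uniform symbol estimate on $\mu^{N-d+\nu}\mathcal{R}_N$: although the pointwise bound is routine, propagating it through $z$-derivatives requires a careful case analysis across $|z|\lessgtr\mu$ and a reconciliation of the homogeneous estimates on $\gamma_{\rho,L}$, phrased in $|z|$ and $|(z,\mu)|$, with the inhomogeneous weights $\spk{z},\spk{z,\mu}$ native to the calculus. The choice $r_N=\max(d-\nu,N+1)$ in the statement leaves exactly the slack needed to absorb the resulting derivative loss.
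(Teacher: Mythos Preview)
Your approach is essentially the paper's: both insert the expansion at infinity \eqref{eq:expansion01}, apply Lemma~\ref{lem:exp} to each factor $[z,\mu]^{d-\nu-j}$, arrive at the same formula $a^\infty_{\{d,\nu+k\}}=\sum_{j\le k}p_{d-\nu-j,k-j}\,a^\infty_{[d,\nu+j]}$, and estimate the two kinds of remainders. The only real difference is the order. The paper passes at once to the variable $t=\mu^{-1}$, writes $t^{d-\nu}a(z,t^{-1})=\sum_j a^\infty_{[d,\nu+j]}\spk{tz}^{d-\nu-j}t^j+t^{d-\nu}r_N(z,t^{-1})$, and directly estimates $\partial_t^kD_z^\alpha$ of each piece by $t^{N-k}\spk{z}^{\nu+N-|\alpha|}$; this single estimate gives \eqref{eq:second-limit-symbol} immediately and yields \eqref{eq:limit02} by substituting $t=\mu^{-1}$. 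You instead establish \eqref{eq:limit02} first in the $\mu$-variable and then pull \eqref{eq:second-limit-symbol} out of it.

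That reverse step is the only place your write-up is thinner than the paper's. Knowing that $g_{k+1}(\cdot,t)$ is bounded uniformly in $t$ does not by itself control $\partial_t^k\big(t^{k+1}g_{k+1}\big)$; you need uniform bounds on $\partial_t^j g_{k+1}$ for $j\le k$. Your remedy---reapply the whole analysis to $\partial_\mu^\ell a\in\wtbfS^{d-\ell,\nu}_{1,0}$---works, but it is more circuitous than simply carrying the $t$-derivatives through the two remainder estimates from the start, which is what the paper does. Also, your assertion that $\gamma_{\rho,L}\in\wt S^{\rho,L}_\Hom$ is morally right for $\mu>0$ (it follows cleanly from the $C^{\infty,L}_\bfT$ description, since $\gamma_{\rho,L}$ vanishes to order $L$ at $z=0$), but the integral formula alone needs a bit more unpacking than you indicate, and $\gamma_{\rho,L}$ is not defined down to $\mu=0$; since \eqref{eq:limit02} only concerns $\mu\ge1$ this is harmless.
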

\begin{proof}
Let $0<t\le 1$. 
For every $N\ge(d-\nu)_+$, there exists an $r_N\in \wt S^{d,\nu+N}_{1,0}(\rz^m\times\rpbar)$ such that 
 $$t^{d-\nu}a(z,t^{-1})=\sum_{j=0}^{N-1}a_{[d,\nu+j]}(z)\spk{tz}^{d-\nu-j}t^j+ t^{d-\nu}r_N(z,t^{-1}).$$
Since $\partial_t^k\big(t^{d-\nu}r_N(z,t^{-1})\big)$ is a linear combination of terms 
$t^{d-\nu-k}((\mu\partial_\mu)^\ell r_n)(z,t^{-1})$ with $0\le\ell\le k$, 
\begin{align*}
  |\partial_t^k D^\alpha_z (t^{d-\nu}r_N(z,1/t)\big)|
  &\lesssim t^{d-\nu-k}\spk{z}^{\nu+N-|\alpha|}[z,t^{-1}]^{d-\nu-N}\\
  &=t^{N-k}\spk{z}^{\nu+N-|\alpha|}\spk{tz}^{d-\nu-N}\lesssim t^{N-k}\spk{z}^{\nu+N-|\alpha|}.
\end{align*}
This shows $\partial_t^k(t^{\nu-d}r_N(\cdot,t^{-1}))\xrightarrow{t\to0^+}0$ in $S^{\nu+N}_{1,0}(\rz^m)$ for every $0\le k\le N-1$. 
As verified in the proof of Lemma \ref{lem:exp} (with $\rho=d-\nu-j$ and $L=N-j)$, 
 $$t^j\spk{tz}^{d-\nu-j}=\sum_{\ell=0}^{N-j-1}p_{d-\nu-j,\ell}(z)t^{\ell+j}+ t^Nr_{jN}(t,z),,$$
where $r_{jN}(t,z)$ is linear combination of terms 
 $$z^\alpha\int_0^1(1-\theta)^{N-j-1}q(t\theta z)\,d\theta,\quad 
     |\alpha|=N-j,\;q\in S^{d-\nu-N}_{1,0}(\rz^m)\subset S^0_{1,0}(\rz^m)$$
(cf. the first identity in \eqref{eq:remainder-term}). Since $\partial_t^k (t^Nq(t\theta z))= 
t^{N-k}\wt q(t\theta z)$ with a resulting $\wt q\in S^0_{1,0}(\rz^m)$, it follows that 
$\partial^k_t(t^Nr_{jN}(t,\cdot))\xrightarrow{t\to0^+}0$ in $S^{N-j}_{1,0}(\rz^m)$ whenever $0\le k\le N-1$.  
Altogether,
\begin{align*}
 t^{d-\nu} a(z,t^{-1})
%&=\sum_{j=0}^{N-1}\sum_{\ell=0}^{N-j-1}p_{d-\nu-j,\ell}(z)a_{[d,\nu+j]}(z)t^{j+\ell}+ R_N(z,t)\\
          &=\sum_{k=0}^{N-1}\Big(\sum_{\ell=0}^k p_{d-\nu-\ell,k-\ell}(z)a_{[d,\nu+\ell]}^\infty(z)\Big)t^{k}+ R_N(z,t),
\end{align*}
where $\partial_t^k R_N(t,\cdot)\xrightarrow{t\to0^+}0$ in $S^{\nu+N}_{1,0}(\rz^m)$ for every $0\le k\le N-1$. 
\forget{
It follows that 
\begin{equation}\label{eq:coefficients}
 p_k(z)=\sum_{\ell=0}^{k}\alpha_{k\ell}z^{k-\ell}a_{[d,\nu+\ell]}(z),\qquad \alpha_{k\ell}=\beta_{\ell,k-\ell} k!,
\end{equation}
with convergence in \eqref{eq:limit} is in $S^{\nu+N}(\rz^m)$ for all $0\le k\le N-1$. 
}
Since $N\ge(d-\nu)_+$ is arbitrary, the first claim follows. 
\eqref{eq:limit02} follows by substituting $t$ with $\mu^{-1}$. 
\end{proof}

As a consequence of the previous proof, the coefficient symbols $a^\infty_{\{d,\nu+k\}}$ can be determined 
from the $a_{[d,\nu+k]}^\infty$ and vice versa: Let  
$B=B_{d-\nu,N}(z)=(b_{k\ell}(z))_{0\le k,\ell\le N-1}$, be the lower triangular matrix with 
 $$b_{k\ell}(z)=
    \begin{cases}
     \frac{1}{(k-\ell)!}\partial^{k-\ell}_t \spk{tz}^{d-\nu-\ell}\big|_{t=0}&:0\le \ell\le k, \\ 0& :\text{otherwise}
    \end{cases},\qquad 0\le k\le N-1.$$
%$\beta(d-\nu-\ell,k-\ell)|z|^{k-\ell}$ if $0\le\ell\le k$ and $b_{jk}=0$ otherwise. 
If $\vec{a}^\infty_{\{d,\nu\}}=(a^\infty_{\{d,\nu\}},\ldots,a^\infty_{\{d,\nu+N-1\}})$ and 
$\vec{a}^\infty_{[d,\nu]}=(a^\infty_{[d,\nu]},\ldots,a^\infty_{[d,\nu+N-1]})$ then 
$\vec{a}^\infty_{\{d,\nu\}}=B\vec{a}^\infty_{[d,\nu]}$. Since the diagonal entries 
of $B$ are equal to $1$, $B$ is invertible and $B^{-1}$ has the same structure.

The next theorem is the analogue of  \cite[Theorem 2.1]{GrSe}. 

\begin{theorem}\label{thm:trace-expansion-01}
Let $a\in\wtbfS^{d,\nu}(\rz^m\times\rpbar)$ with $d<-m$.  
Then, for resulting coefficients depending on $a$,  
 $$\int_{\rz^m} a(z,\mu)\,dz\sim_{\mu\to+\infty} \sum_{j=0}^{+\infty}c_j\mu^{d+m-j}+\sum_{\ell=0}^{+\infty}
    \big(c_\ell^\prime\log\mu+c_\ell^{\prime\prime}\big)\mu^{d-\nu-\ell}.$$
\end{theorem}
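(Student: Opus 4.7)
The plan is to split the integral at the natural scale $|z|=\mu$:
$$\int_{\rz^m} a(z,\mu)\,dz = I_-(\mu)+I_+(\mu),\qquad I_\pm(\mu):=\int_{|z|\gtrless\mu} a(z,\mu)\,dz,$$
and to apply to each piece the expansion of $a$ best suited to it: on the outer piece $\{|z|\ge\mu\}$ the poly-homogeneous decomposition of Definition \ref{def:poly-homogeneous}, on the inner piece $\{|z|\le\mu\}$ the expansion at infinity of Theorem \ref{prop:first-expansion}. Both pieces will feed both series in the claim.

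For the outer piece, I would substitute $a=\chi\sum_{j<J}a^{(d-j,\nu-j)}+r_J$ with $r_J\in\wtbfS^{d-J,\nu-J}_{1,0}(\rz^m\times\rpbar)$. For $\mu$ so large that $\chi\equiv 1$ on $|z|\ge\mu$, the change of variable $z=\mu y$ combined with the $(d-j)$-homogeneity gives
$$\int_{|z|\ge\mu} a^{(d-j,\nu-j)}(z,\mu)\,dz = c_j\,\mu^{d-j+m},\quad c_j:=\int_{|y|\ge 1} a^{(d-j,\nu-j)}(y,1)\,dy,$$
with $c_j$ finite by $|a^{(d-j,\nu-j)}(y,1)|\lesssim|y|^{d-j}$ on $|y|\ge 1$ (combine $|y|^{\nu-j}$ with $|(y,1)|^{d-\nu}\lesssim|y|^{d-\nu}$) and by $d-j<-m$. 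The remainder satisfies $|r_J(z,\mu)|\lesssim\spk{z}^{d-J}$ on $|z|\ge\mu$ (where $\spk{z,\mu}\sim\spk{z}$), hence $\int_{|z|\ge\mu} r_J\,dz=O(\mu^{d+m-J})$. Letting $J\to\infty$ shows that $I_+$ contributes precisely the first series $\sum_j c_j\mu^{d+m-j}$, with no logarithmic terms.

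For the inner piece, Theorem \ref{prop:first-expansion} yields
$$a(z,\mu)=\sum_{k=0}^{K-1} a^\infty_{\{d,\nu+k\}}(z)\mu^{d-\nu-k} + R_K(z,\mu).$$
Each coefficient $a^\infty_{\{d,\nu+k\}}$ is poly-homogeneous of order $\nu+k$: through the triangular-invertible relation $\vec a^\infty_{\{d,\nu\}}=B\vec a^\infty_{[d,\nu]}$ listed after Theorem \ref{prop:first-expansion}, it is a polynomial combination of the poly-homogeneous principal limit symbols $a^\infty_{[d,\nu+\ell]}\in S^{\nu+\ell}(\rz^m)$ furnished by \eqref{eq:asymptotic-expansion}. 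A standard polar-coordinate calculation then shows that for any poly-homogeneous $f\in S^s(\rz^m)$,
$$\int_{|z|\le\mu} f(z)\,dz = \sum_j\alpha_j\mu^{s-j+m}+\beta\log\mu+\gamma+o(1),$$
with $\beta\ne 0$ only at the exceptional $j_0$ satisfying $s+m-j_0=0$. Applied to $f=a^\infty_{\{d,\nu+k\}}$ (of order $\nu+k$) and multiplied by $\mu^{d-\nu-k}$, the terms $\alpha_j\mu^{\nu+k-j+m}$ join the first series $\sum c_j\mu^{d+m-j}$, while $(\beta\log\mu+\gamma)\mu^{d-\nu-k}$ yield the second series $\sum(c_\ell'\log\mu+c_\ell'')\mu^{d-\nu-\ell}$.

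The main obstacle is to control $R_K$ after integration: the direct bound \eqref{eq:limit02} gives $|R_K(z,\mu)|\lesssim\mu^{d-\nu-K}\spk{z}^{\nu+r_K}$ with $r_K=\max(d-\nu,K+1)$, which integrates over $\{|z|\le\mu\}$ only to $O(\mu^{d+m+1})$ independently of $K$. The remedy is to first peel off the top $J$ polyhomogeneous components of $a$, replacing $a$ by $\wt a:=a-\chi\sum_{j<J}a^{(d-j,\nu-j)}\in\wtbfS^{d-J,\nu-J}(\rz^m\times\rpbar)$; applying Theorem \ref{prop:first-expansion} to $\wt a$ produces a remainder whose $\{|z|\le\mu\}$-integral is of order $\mu^{d-J+m+1}$, arbitrarily small for $J$ large. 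The subtracted components must then be treated directly on $\{|z|\le\mu\}$; using the expansion of Proposition \ref{prop:homogeneous-limit-expansion} and the same polar-coordinate recipe (with the substitution $r=\mu s$), their contributions split cleanly into $\mu^{d-j+m}$ terms and, at the exceptional indices $i=j-\nu-m\in\nz_0$, $(\log\mu+\text{const})\mu^{d-\nu-i}$ terms, which regroup into the two claimed series. A minor final check is that the compactly-supported discrepancy $(1-\chi)(z)\,a(z,\mu)$ produces only $\mu^{d-\nu-k}$ contributions and is absorbed into the second series.
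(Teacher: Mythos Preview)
Your approach is correct and essentially coincides with the paper's: both peel off $J$ homogeneous components, treat the remainder via Theorem~\ref{prop:first-expansion}, and split the integral of each homogeneous component at $|z|=\mu$, using homogeneity on the outer shell and Proposition~\ref{prop:homogeneous-limit-expansion} together with polar coordinates on the inner ball. The only organizational differences are that the paper integrates the remainder $r$ directly over all of $\rz^m$ (choosing $J$ so large that the coefficient symbols in \eqref{eq:r-expansion} are globally integrable) rather than separately on $\{|z|\lessgtr\mu\}$, and that for the inner integral of the homogeneous components it first invokes Lemma~\ref{lem:exp} to convert the factors $|(z,\mu)|^{d-\nu-k}$ coming from Proposition~\ref{prop:homogeneous-limit-expansion} into pure $\mu$-powers with polynomial coefficients in $z$, which makes the subsequent polar-coordinate step (integrating purely $z$-homogeneous functions over $1\le|z|\le\mu$) slightly cleaner than your scaling substitution $r=\mu s$.
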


As will result from the proof, the coefficients $c_\ell'$ are non-zero at most if $\ell=j-m-\nu$ for some $j\in\nz_0$; these coefficients as well as the coefficients $c_j$ only depend on the values of the homogeneous components of $a^{(d-j,\nu-j)}(z,\mu)$, $|z|\ge1$.  The proof itself is essentially the same as that given in \cite{GrSe}. 

\begin{proof}
Let $N\ge(d-\nu)_+$ be arbitrary and fixed. Fix $J\in\nz$ with $\nu+N-J<-m$ and write  
 $$a(z,\mu)=\chi(z)\sum_{j=0}^{J-1} a^{(d-j,\nu-j)}(z,\mu) + r(z,\mu),
    \qquad r\in \wtbfS^{d-J,\nu-J}_{1,0}(\rz^m\times\rpbar),$$
where $\chi$ is a zero-excision function which is constant $1$ outside of the unit-ball. 
Due to Proposition \ref{prop:first-expansion}, 
\begin{equation}\label{eq:r-expansion} 
 r(z,\mu)=\sum_{\ell=0}^{N-1} r^\infty_{\{d-J,\nu-J+\ell\}}(z)\mu^{d-\nu-\ell}
    +\mu^{d-\nu-N}r'(z,\mu)
\end{equation}
with $r^\infty_{\{d-J,\nu-J+\ell\}}\in S^{\nu-J+\ell}_{1,0}(\rz^m)$ and $r'(\cdot,\mu)\in S^{\nu-J+N}_{1,0}(\rz^m)$ uniformly in $\mu\ge 1$. Hence, by the choice of $J$, 
 $$\int_{\rz^m} r(z,\mu)\,dz=\sum_{\ell=0}^{N-1} c^{\prime\prime}_{\ell}\mu^{d-\nu-\ell}
     +O(\mu^{d-\nu-N}),\qquad \mu\ge1.$$
Due to the choice of $\chi$, for $\mu\ge1$, 
 $$\int_{|z|\ge\mu} \chi(z)a^{(d-j,\nu-j)}(z,\mu)\,dz=\mu^{d-j+m}\int_{|z|\ge1} a^{(d-j,\nu-j)}(z,1)\,dz=c_j^{(0)} \mu^{d+m-j}.$$
Next fix $L$ with $L>J-1-m-\nu$ $($in particular, $L\ge N$ and $\nu-j+L>-m$ for every $j=0,\ldots,J-1$; the latter implies that 
$|z|^{\nu-j+L}$ is locally integrable on $\rz^m)$. 
Due to Proposition \ref{prop:homogeneous-limit-expansion} we can write  
 $$a^{(d-j,\nu-j)}(z,\mu)=\sum_{k=0}^{L-1} a^{\infty}_{[d-j,\nu-j+k]}(z)|(z,\mu)|^{d-\nu-k}+r_{j}(z,\mu)$$ 
with $a^{\infty}_{[d-j,\nu-j+k]}(z)\in S^{\nu-j+k}_\Hom(\rz^m)$ and $r_{j}\in \wtbfS^{d-j,\nu-j+L}_\Hom(\rz^m\times\rpbar)$. 
Observe that, for $\mu>0$ and $z\not=0$,
\begin{align*}  
 |r_{j}(z,\mu)|&\lesssim |z|^{\nu-j+L}|(z,\mu)|^{d-\nu-L}
     =\mu^{d-\nu-L}|z|^{\nu-j+L}\spk{z\mu^{-1}}^{d-\nu-L}\\
     &\le \mu^{d-\nu-L}|z|^{\nu-j+L}.
\end{align*}
Therefore, by using Lemma \ref{lem:exp} (with $\rho=d-\nu-k$ and $L$ replaced by $L-k)$,
\begin{equation}\label{eq:for-later}
 a^{(d-j,\nu-j)}(z,\mu)=\sum_{\ell=0}^{L-1} q_{j,\ell}(z)\mu^{d-\nu-\ell}+s_{j}(z,\mu)
\end{equation}
with $q_{j,\ell}(x,\xi)\in S^{\nu-j+\ell}_\Hom(\rz^m)$ and a remainder $s_{j}$ which is homogeneous of degree 
$d-j$ on $(\rz^m\setminus\{0\})\times\rz_+$ and satisfies $|s_{j}(z,\mu)|\lesssim \mu^{d-\nu-L}|z|^{\nu-j+L}$.  
Hence 
\begin{align*}
 \int_{0\le |z|\le\mu}\chi(z)s_{j}(z,\mu)\,dz
  &=\int_{0\le |z|\le\mu}s_{j}(z,\mu)\,dz-\int_{0\le |z|\le1}(1-\chi)(z)s_{j}(z,\mu)\,dz\\
  &=c_j^{(1)}\mu^{d+m-j}+O(\mu^{d-\nu-L}).
\end{align*}
Finally, 
 $$\int_{0\le |z|\le\mu}\chi(z)q_{j,\ell}(z)\,dz=\int_{1\le |z|\le\mu}q_{j,\ell}(z)\,dz+c_{j,\ell}''$$
and, by homogeneity and by using polar-coordinates, 
 $$\mu^{d-\nu-\ell}\int_{1\le |z|\le\mu}q_{j,\ell}(z)\,dz=
     \begin{cases}
      \alpha_{j,\ell}(\mu^{d+m-j}-\mu^{d-\nu-\ell}) &: \ell\not=j-m-\nu\\
      \alpha_{j,\ell}\mu^{d-\nu-\ell}\log\mu=\alpha_{j,\ell}\mu^{d+m-j}\log\mu &: \ell=j-m-\nu
     \end{cases}
 $$
This yields the desired expansion and completes the proof. 
\end{proof}

%%%%%%%%%%%%%%%%%%%%%%%%%%%%%%%%%%%%%%%%
%%%%%%%%%%%%%%%%%%%%%%%%%%%%%%%%%%%%%%%%
\section{New classes of Shubin type pseudodifferential operators} \label{sec:04}

Substituting above $m=2n$ and the variable $z\in\rz^m$ by $(x,\xi)\in\rz^{2n}$ leads to the definition of various symbol classes of Shubin type: 

\begin{definition}
The spaces $\Gamma^d_{1,0}(\rz^{2n})$ and $\Gamma^d(\rz^{2n})$ consist of all symbols  
$a(x,\xi):\rz^n\times\rz^n\to\cz$ belonging to $S^d_{1,0}(\rz^{2n}_{z=(x,\xi)})$ and $S^d(\rz^{2n}_{z=(x,\xi)})$, 
respectively. 

The spaces $\Gamma^{d}_{1,0}(\rz^{2n}\times\rpbar)$, $\wt\Gamma^{d,\nu}_{1,0}(\rz^{2n}\times\rpbar)$, and $\wtbfG^{d,\nu}_{1,0}(\rz^{2n}\times\rpbar)$ 
consist of all symbols $a(x,\xi,\mu):\rz^n\times\rz^n\times\rpbar\to\cz$ belonging to 
$S^{d}_{1,0}(\rz^{2n}_{z=(x,\xi)}\times\rpbar)$, $\wt S^{d,\nu}_{1,0}(\rz^{2n}_{z=(x,\xi)}\times\rpbar)$, and $\wtbfS^{d,\nu}_{1,0}(\rz^{2n}_{z=(x,\xi)}\times\rpbar)$, respectively. 
Analogously we define the subspaces of poly-homogeneous symbols $\Gamma^{d}(\rz^{2n}\times\rpbar)$, $\wt\Gamma^{d,\nu}(\rz^{2n}\times\rpbar)$, and $\wtbfG^{d,\nu}(\rz^{2n}\times\rpbar)$.
\end{definition}

The classes of homogeneous symbols are defined analogously. 

Evidently, the previous definition is tautological because, as function spaces, the $\Gamma$-spaces simply coincide with the corresponding $S$-spaces. However, we prefer to introduce the notation of the $\Gamma$-classes in order to emphasize that for these classes we employ the standard quantization map to pass from a symbol to its associated $\psi$do on $\rz^n$, where we have a distinction in the role of the $x$- and $\xi$-variable, namely   
 $$[a(x,D)u](x)=\int e^{ix\xi}a(x,\xi)\wh{u}(\xi)\,\dbar\xi,\qquad x\in\rz,\;\scrS(\rz^n).$$
Here, $\wh u$ is the Fourier transform of $u$, $\dbar\xi=(2\pi)^{-n}\,d\xi$, and the symbol may also depend on $\mu$; in this case write $a(x,D,\mu)$ for the associated operator-family. Note that if $a$ is a parameter-dependent symbol of order $d$ in any of the above introduced symbol classes and $a_\mu(x.\xi):=a(x,\xi.\mu)$ then $a_\mu\in \Gamma^d_{1,0}(\rz^{2n})$ for every $\mu$. 

\begin{remark}
With each $a\in \wtbfG^{d,\nu}(\rz^{2n}\times\rpbar)$ there are associated three principal symbols, the homogeneous principal symbol, the principal limit symbol, and the principal angular symbol $($always defined as in Section $\ref{sec:02}$ by substituting $z$ with $(x,\xi))$. With the limit symbol $a_{[d,\nu]}\in \Gamma^d(\rz^{2n})$ we associate the so-called principal limit operator $a_{[d,\nu]}(x,D)$. 
\end{remark}

Initially defined on rapidly decreasing functions, the $\psi$do associated with symbols of order $d$ in one of the above $\Gamma$-classes extend by continuity to linear and bounded maps 
 $$\calH^s_p(\rz^n)\lra \calH^{s-d}_p(\rz^n),\qquad s\in\rz,\;1<p<+\infty,$$
where the scale of Shubin-type Sobolev spaces is defined by 
\begin{equation}\label{eq:spaces}
 \calH^s_p(\rz^n)=\{u\in\scrS'(\rz^n)\mid \spk{x,D}^s u\in L^p(\rz^n)\}
\end{equation}
with the canonical norm. Note that $s$ measures simultaneously the smoothness and the decay/growth at infinity of a distribution. 
Note that $\cap_s \calH^s_p(\rz^n)=\scrS(\rz^n)$, the Schwartz space of rapidly decreasing functions. 

If $a\in \Gamma^{d}_{1,0}(\rz^{2n})$, $d\ge 0$, is elliptic 
% i.e.,there exists a $b\in \Gamma^{-d}_{1,0}(\rz^{2n})$ such that $a\#b-1$ and $b\#a-1$ are symbols of arbitrarily negative order $($any such $b$ is a so-called parametrix of $a)$. Then, 
then, by elliptic regularity, the unbounded operator 
 $$A=a(x,D):\scrS(\rz^n)\subset\calH^s_p(\rz^n)\lra \calH^s_p(\rz^n)$$
has a unique closed extension given by the action of $a(x,D)$ on the domain $\calH^{s+d}_p(\rz^n)$.  

For later purpose let us recall that the embedding $\calH^s_2(\rz^n)\hookrightarrow \calH^r_2(\rz^n)$ is of trace class 
provided $s-r>2n$. In particular, the following result is known to be true:

\begin{proposition}
If $a\in \Gamma^d_{1,0}(\rz^{2n})$ with $d<-2n$, then $a(x,D)$ induces a trace class operator 
in $L^2(\rz^n)=\calH^0_2(\rz^n)$. Moreover, $\displaystyle\mathrm{tr}\,a(x,D)=\iint a(x,\xi)\, dx\dbar\xi$. 
\end{proposition}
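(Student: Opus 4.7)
My plan has two parts: first, deduce the trace class property from the continuity statement stated just before the proposition; second, establish the trace identity via approximation by Schwartz symbols.

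For the trace class property, since $a\in\Gamma^d_{1,0}(\rz^{2n})$ with $d<-2n$, the operator $a(x,D):L^2(\rz^n)=\calH^0_2(\rz^n)\lra \calH^{-d}_2(\rz^n)$ is continuous. Because $-d>2n$, the embedding $\iota:\calH^{-d}_2(\rz^n)\hookrightarrow L^2(\rz^n)$ is of trace class, as recalled in the excerpt. Hence $a(x,D):L^2(\rz^n)\to L^2(\rz^n)$, viewed as $\iota\circ a(x,D)$, is a composition of a bounded and a trace class operator, and is therefore trace class.

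For the identity, I would approximate $a$ by Schwartz symbols. Fix $\chi\in C_c^\infty(\rz^{2n})$ with $\chi\equiv 1$ near the origin, and put $a_\eps(z):=\chi(\eps z)\,a(z)\in\scrS(\rz^{2n})$. Pick any $d'$ with $d<d'<-2n$. A routine Leibniz estimate shows that $\chi(\eps\cdot)$ is bounded in $\Gamma^0_{1,0}(\rz^{2n})$ uniformly in $\eps\in(0,1]$ and that $a_\eps\to a$ in $\Gamma^{d'}_{1,0}(\rz^{2n})$ as $\eps\to 0^+$, because the difference $(1-\chi(\eps\cdot))a$ is supported in $\{|z|\gtrsim 1/\eps\}$ where $\spk{z}^{|\alpha|-d'}|\partial^\alpha((1-\chi(\eps\cdot))a)(z)|\lesssim \spk{z}^{d-d'}\to 0$. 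By continuity of quantization, $(a_\eps-a)(x,D):L^2(\rz^n)\to \calH^{-d'}_2(\rz^n)$ tends to $0$ in operator norm, and composing with the trace class embedding $\calH^{-d'}_2\hookrightarrow L^2$ gives $\|a_\eps(x,D)-a(x,D)\|_1\to 0$, hence $\mathrm{tr}\,a_\eps(x,D)\to \mathrm{tr}\,a(x,D)$. Moreover, since $d<-2n$ the symbol $a$ is integrable on $\rz^{2n}$, and dominated convergence yields $\iint a_\eps(x,\xi)\,dx\dbar\xi\to \iint a(x,\xi)\,dx\dbar\xi$.

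It remains to check $\mathrm{tr}\,b(x,D)=\iint b(x,\xi)\,dx\dbar\xi$ for $b\in\scrS(\rz^{2n})$. The Schwartz kernel $K_b(x,y)=\int e^{i(x-y)\xi}b(x,\xi)\dbar\xi$ lies in $\scrS(\rz^{2n})$ and satisfies $K_b(x,x)=\int b(x,\xi)\dbar\xi$. A Hilbert--Schmidt singular value decomposition writes $K_b=\sum_j \lambda_j\,u_j\otimes \overline{v_j}$ with $u_j,v_j\in L^2(\rz^n)$ orthonormal and $(\lambda_j)$ rapidly decreasing $($by the smoothness and decay of $K_b)$; then $\mathrm{tr}\,b(x,D)=\sum_j \lambda_j \spk{u_j,v_j}_{L^2}=\int K_b(x,x)\,dx$, the interchange of sum and integral being justified by absolute convergence. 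Applying this to each $a_\eps$ and passing to the limit in both sides via the previous paragraph completes the proof. The main obstacle is the trace norm convergence in the approximation step: it is what forces the slight loss of order from $d$ to $d'$ and relies crucially on the fact that the Shubin Sobolev embedding is actually trace class, not merely compact.
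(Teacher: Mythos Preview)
The paper does not actually prove this proposition: it is stated as a known result, preceded only by the sentence recalling that the embedding $\calH^s_2(\rz^n)\hookrightarrow\calH^r_2(\rz^n)$ is trace class for $s-r>2n$. Your argument for the trace class property is precisely the one the paper is hinting at with that remark, so on this point you are in complete agreement with the paper's (implicit) reasoning.

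For the trace formula the paper offers nothing to compare with, so your approximation argument stands on its own. It is correct: the convergence $a_\eps\to a$ in $\Gamma^{d'}_{1,0}$ with $d<d'<-2n$, combined with the factorization through the trace class embedding $\calH^{-d'}_2\hookrightarrow L^2$, does give trace norm convergence, and the diagonal integral formula for Schwartz kernels is standard. One small comment: your justification of $\mathrm{tr}\,b(x,D)=\int K_b(x,x)\,dx$ via an SVD with ``rapidly decreasing'' singular values is a bit informal; a cleaner route is to note that for $K_b\in\scrS(\rz^{2n})$ the operator is trace class (it factors as the product of two Hilbert--Schmidt operators, or one can invoke the trace class embedding once more), and then the trace-equals-diagonal-integral identity for integral operators with continuous, integrable kernels is a classical fact (e.g.\ Brislawn's theorem, or directly for Schwartz kernels by expanding in a Hermite basis). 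But this is a matter of presentation, not substance.
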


%%%%%%%%%%%%%%%%%%%%%%%%%%%%%%%%%%%%%%%%
\subsection{Composition and formal adjoint} \label{sec:03.1}

On symbolic level, the composition of two $\psi$do $a_0(x,D)$ and $a_1(x,D)$ corresponds to the so-called 
\emph{Leibniz product} of symbols,  
\begin{align}\label{eq:leibniz-product}
 (a_1\#a_0)(x,\xi)=\iint e^{iy\eta}a_1(x,\xi+\eta)a_0(x+y,\xi)\,dy\dbar\eta 
\end{align}
$($integration in the sense of oscillatory integrals, cf. for example \cite{Kuma}$)$. It is known that 
the Leibniz product induces maps 
 $$\Gamma^{d_1}_{1,0}(\rz^{2n})\times\Gamma^{d_0}_{1,0}(\rz^{2n})\lra \Gamma^{d_0+d_1}_{1,0}(\rz^{2n}),\qquad d_0,d_1\in\rz;$$
poly-homogeneity is preserved under composition. 
%By the usual asymptotic expansion formula, i.e., 
% $$a_1\#a_0\equiv \sum_{|\alpha|=0}^{N-1}\frac{1}{\alpha!}(\partial^\alpha_\xi a_1)(D^\alpha_x a_0)\quad\mod 
%    \Gamma^{d_0+d_1-N}_{1,0}(\rz^{2n})$$
%for every $N\in\nz$, poly-homogeneity is preserved under composition. 
The analogous statement holds for the parameter-dependent classes $\Gamma^{d}_{1,0}(\rz^{2n}\times\rpbar)$, where the composition, 
respectively the Leibniz product, is taken pointwise for each $\mu$. 

\begin{theorem}\label{thm:composition}
The $(\mu$-wise$)$ Leibniz-product induces maps
% $$\wt\Gamma^{d_0,\nu_0}_{1,0}(\rz^{2n}\times\rpbar)\times\wt\Gamma_{1,0}^{d_1,\nu_1}(\rz^{2n}\times\rpbar)\lra 
%     \wt\Gamma_{1,0}^{d_0+d_1,\nu_0+\nu_1}(\rz^{2n}\times\rpbar)$$
%as well as 
 $$\wtbfG_{1,0}^{d_1,\nu_1}(\rz^{2n}\times\rpbar)\times \wtbfG_{1,0}^{d_0,\nu_0}(\rz^{2n}\times\rpbar)\lra 
     \wtbfG_{1,0}^{d_0+d_1,\nu_0+\nu_1}(\rz^{2n}\times\rpbar).$$
For arbitrary $N\in\nz_0$, 
\begin{equation}\label{eq:composition-expansion}
 a_1\#a_0\equiv \sum_{|\alpha|=0}^{N-1}\frac{1}{\alpha!}(\partial^\alpha_\xi a_1)(D^\alpha_x a_0)\quad\mod 
    \wtbfG_{1,0}^{d_0+d_1-2N,\nu_0+\nu_1-2N}(\rz^{2n}\times\rpbar)
\end{equation}
%with arbitrary $N$. This yields the claims for poly-homogeneous symbols. 
%The analogous statement is true for the subclasses of poly-homogeneous symbols. 
Moreover, 
 $$(a_1\#a_0)_{[d_0+d_1,\nu_0+\nu_1]}^\infty=a_{1,[d_1,\nu_1]}^\infty\#a_{0,[d_0,\nu_0]}^\infty.$$
If both $a_0$ and $a_1$ are poly-homogeneous then %in the sense of Definition $\ref{aaa}$ 
so is $a_1\#a_0$ and 
 $$(a_1\#a_0)^{(d_0+d_1,\nu_0+\nu_1)}=a_1^{(d_1,\nu_1)} a_0^{(d_0,\nu_0)}.$$
In other words, homogeneous principal symbol, principal limit symbol, and principal angular symbol are multiplicative under composition.  
\end{theorem}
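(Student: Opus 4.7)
The plan is to reduce the theorem to three ingredients: (a) the standard Leibniz expansion on the ambient Hörmander-with-regularity classes $\wt\Gamma^{d,\nu}_{1,0}$, (b) a promotion step that checks the expansion-at-infinity structure survives composition, and (c) bookkeeping of principal symbols via the derivative/product formulas stated in Section~\ref{sec:02.3}.

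First I would expand $a_1(x,\xi+\eta)$ in $\eta$ by Taylor's formula of order $2N$ inside the oscillatory integral~\eqref{eq:leibniz-product}; writing $\eta^\alpha e^{iy\eta}=D_y^\alpha e^{iy\eta}$ and integrating by parts in $y$ produces the expansion
\[
 a_1\#a_0=\sum_{|\alpha|<2N}\tfrac{1}{\alpha!}(\partial_\xi^\alpha a_1)(D_x^\alpha a_0)+r_{2N},
\]
with $r_{2N}$ a regularised oscillatory integral whose integrand carries $2N$ derivatives of $a_1$ in $\xi$ and $2N$ derivatives of $a_0$ in $x$. Since each $z$-derivative lowers both $d$ and $\nu$ by one (as recorded in Section~\ref{sec:02.3}), the finite sum is $\mu$-wise a symbol in $\wt\Gamma^{d_0+d_1,\nu_0+\nu_1}_{1,0}$ and $r_{2N}$ lies a priori in $\wt\Gamma^{d_0+d_1-2N,\nu_0+\nu_1-2N}_{1,0}$, by the standard Kumano-go-type estimates adapted to the two weights $\spk{z}$ and $\spk{z,\mu}$ that define $\wt S^{d,\nu}_{1,0}$.

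Next I would promote this to the $\wtbfG$-level. By the product rule for expansions at infinity already recorded in Section~\ref{sec:02.3}, each term $(\partial_\xi^\alpha a_1)(D_x^\alpha a_0)$ lies in $\wtbfG^{d_0+d_1-2|\alpha|,\nu_0+\nu_1-2|\alpha|}_{1,0}$ with limit symbol $(\partial_\xi^\alpha a_1)^\infty_{[\cdot]}\,(D_x^\alpha a_0)^\infty_{[\cdot]}$. Theorem~\ref{thm:asymptotic-summation} then produces a symbol $b\in\wtbfG^{d_0+d_1,\nu_0+\nu_1}_{1,0}$ with $b-\sum_{|\alpha|<2N}\tfrac{1}{\alpha!}(\partial_\xi^\alpha a_1)(D_x^\alpha a_0)\in\wtbfG^{d_0+d_1-2N,\nu_0+\nu_1-2N}_{1,0}$ for every $N$. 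Combining with the $\wt\Gamma$-expansion just obtained, the difference $a_1\#a_0-b$ lies in $\wt\Gamma^{d-N,\nu-N}_{1,0}$ for every $N$, i.e.\ in $\wt\Gamma^{-\infty,-\infty}$, which by \eqref{eq:smoothing} is contained in $\wtbfG^{-\infty,-\infty}$; hence $a_1\#a_0\in\wtbfG^{d_0+d_1,\nu_0+\nu_1}_{1,0}$ and \eqref{eq:composition-expansion} holds with remainder in the correct finer class.

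For the multiplicativity of the principal symbols I would apply the derivative formula $(\partial^\alpha_z a)^\infty_{[d-|\alpha|,\nu-|\alpha|]}=D_z^\alpha a^\infty_{[d,\nu]}$ (up to sign) and the product formula $(a_1 a_0)^\infty_{[\cdot]}=a_1^\infty_{[\cdot]} a_0^\infty_{[\cdot]}$ termwise inside \eqref{eq:composition-expansion}. This shows that the limit symbols of the summands assemble into the formal Leibniz expansion of $a_{1,[d_1,\nu_1]}^\infty\#a_{0,[d_0,\nu_0]}^\infty$, and by the uniqueness part of asymptotic summation applied at the level of limit symbols the identity $(a_1\#a_0)^\infty_{[d_0+d_1,\nu_0+\nu_1]}=a_{1,[d_1,\nu_1]}^\infty\#a_{0,[d_0,\nu_0]}^\infty$ follows. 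The poly-homogeneous case and the multiplicativity of $a^{(d,\nu)}$ are then routine: decompose each $a_j$ into its homogeneous components plus a lower-order remainder, insert into~\eqref{eq:composition-expansion}, and read off that the leading ($\alpha=0$, $j=0$) contribution is $a_1^{(d_1,\nu_1)}a_0^{(d_0,\nu_0)}$; the angular principal symbols multiply by definition $a_{\spk\nu}=(a^{(d,\nu)})_{\spk\nu}$ together with pointwise multiplicativity of boundary values on $\wh\sz^{2n}_+$.

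The one point that I expect to require actual work is the remainder bound in Step~1 at the $\wtbfG$-level. Directly estimating the oscillatory integral in the $\wt\Gamma^{d-2N,\nu-2N}$-seminorms is standard, but showing \emph{directly} that it admits the expansion at infinity from Definition~\ref{def:poly-homogeneous} is painful; the detour via Theorem~\ref{thm:asymptotic-summation} together with smoothing closure \eqref{eq:smoothing} avoids this, which is why I prefer to first prove the $\wt\Gamma$-level expansion and then upgrade, rather than trying to carry the weighted-Taylor-at-infinity structure through the oscillatory integral by hand.
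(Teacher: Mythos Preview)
Your detour through asymptotic summation has a genuine gap at the promotion step. You claim that $a_1\#a_0-b\in\cap_N\wt\Gamma^{d-N,\nu-N}_{1,0}$ ``by \eqref{eq:smoothing} is contained in $\wtbfG^{-\infty,-\infty}$'', but the inclusion goes the other way. Compare \eqref{eq:remainder01} and \eqref{eq:smoothing}: the former identifies $\cap_N\wt\Gamma^{d-N,\nu-N}_{1,0}$ with $S^{d-\nu}_{1,0}(\rpbar,S^{-\infty}(\rz^{2n}))$, while the latter identifies $\wtbfG^{d-\infty,\nu-\infty}$ with the \emph{poly-homogeneous} class $S^{d-\nu}(\rpbar,S^{-\infty}(\rz^{2n}))$, a strict subspace. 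Concretely, a symbol like $r(z,\mu)=e^{-|z|^2}\,\chi(\mu)\sin(\log\mu)\,\mu^{d-\nu}$ lies in every $\wt\Gamma^{d-N,\nu-N}_{1,0}$ but has no principal limit symbol, hence is not in $\wtbfG^{d,\nu}_{1,0}$. The point is that the $(d-N,\nu-N)$-filtration keeps $d-\nu$ fixed and so says nothing about the expansion at infinity, which is a constraint on the $\mu$-behaviour.

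This is exactly why the paper does \emph{not} take your detour. It shows that $(y,\eta)\mapsto a_1(x,\xi+\eta,\mu)a_0(x+y,\xi,\mu)$ is an amplitude function with values in the Fr\'echet space $\wtbfG^{d_0+d_1,\nu_0+\nu_1}_{1,0}(\rz^{2n}\times\rpbar)$ (via \cite[Section~8]{Seil22}), so that the oscillatory integral \eqref{eq:leibniz-product} converges \emph{in that space}. Since the limit-symbol map $a\mapsto a^\infty_{[d,\nu]}$ is continuous on $\wtbfG^{d,\nu}_{1,0}$, it passes under the integral sign and yields $(a_1\#a_0)^\infty_{[d,\nu]}=a^\infty_{1,[d_1,\nu_1]}\#a^\infty_{0,[d_0,\nu_0]}$ directly. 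In other words, the ``painful'' step you tried to avoid---carrying the expansion-at-infinity structure through the oscillatory integral---is precisely what must be done, and the amplitude-function formalism is the clean way to package it. Your Step~1 and the principal-symbol bookkeeping in Step~3 are fine, but without a direct argument that the oscillatory integral (or at least its Taylor remainder $r_{2N}$) lands in $\wtbfG$ rather than merely in $\wt\Gamma$, the proof does not close.
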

\begin{proof}
The proof relies on results of \cite[Section 8]{Seil22}. Let first $a\in\wtbfS^{d,\nu}_{1,0}(\rz^{2n}\times\rpbar)$ be an arbitrary 
symbol and define 
 $$q(\wt z;z,\mu)=a(z+\wt z,\mu),\qquad z,\wt z\in\rz^{2n}.$$
By \cite[Proposition 8.2]{Seil22} (in the special case of a symbol with constant coefficients), 
$\wt z\mapsto q(\wt z)$ is a so-called amplitude function on $\rz^{2n}_{\wt z}$ with values in 
$\wtbfS^{d,\nu}_{1,0}(\rz^{2n}_z\times\rpbar)$ and $\wt z\mapsto q(\wt z)_{[d,\nu]}=a_{[d,\nu]}(z+\wt z)$ is an amplitude function 
with values in $S^{\nu}_{1,0}(\rz^{2n}_z)$. Writing $\wt z=(y,\eta)$ and $z=(x,\xi)$, it follows that 
 $$p(y,\eta;x,\xi,\eta):=a_1(x,\xi+\eta,\mu)a_0(x+y,\xi,\mu)=a_1(z+(0,\eta),\mu)a_0(z+(y,0),\mu)$$
defines an amplitude function of $(y,\eta)\in\rz^{2n}$ with values in $ \wtbfG_{1,0}^{d_0+d_1,\nu_0+\nu_1}(\rz^{2n}\times\rpbar)$ and with 
 $$\big(p(y,\eta)_{[d_0+d_1,\nu_0+\nu_1]}\big)(x,\xi)=a_{1.[d_1,\nu_1]}(x,\xi+\eta)a_{0,[d_0,\nu_0]}(x+y,\xi)$$
being an amplitude function of $(y,\eta)\in\rz^{2n}$ with values in $\Gamma^{\nu_0+\nu_1}_{1,0}(\rz^{2n})$.  It follows that 
\begin{equation}\label{eq:composition-integral}
 (a_1\#a_0)(x,\xi,\mu)=\iint e^{iy\eta}a_1(x,\xi+\eta,\mu)a_0(x+y,\xi,\mu)\,dy\dbar\eta
\end{equation}
converges as an oscillatory integral with values in $ \wtbfG_{1,0}^{d_0+d_1,\nu_0+\nu_1}(\rz^{2n}\times\rpbar)$ and that its limit symbol 
is the oscillatory integral 
 $$\iint e^{iy\eta}a_{1,[d_1,\nu_1]}(x,\xi+\eta)a_{0,[d_0,\nu_0]}(x+y,\xi)\,dy\dbar\eta=(a_{1,[d_1,\nu_1]}^\infty\#a_{0,[d_0,\nu_0]}^\infty)(x,\xi),$$
which converges in $\Gamma^{\nu_0+\nu_1}_{1,0}(\rz^{2n})$. In a similar way, see \cite[Theorem 8.4]{Seil22}, one verifies \eqref{eq:composition-expansion}. This yields the additional claims for poly-homogeneous symbols. 
\end{proof}

\begin{remark}\label{rem:limit-symbols}
Let the assumptions be as in Theorem $\ref{thm:composition}$. Using \eqref{eq:composition-integral} and 
\eqref{eq:second-limit-symbol}, it is obvious that, for arbitrary $k\in\nz_0$, 
  $$(a_1\#a_0)^\infty_{\{d_0+d_1,\nu_0+\nu_1+k\}}
      =\sum_{\ell=0}^k a^\infty_{1,\{d_1,\nu_1+\ell\}}\#a^\infty_{0,\{d_0,\nu_0+k-\ell\}}.$$
\end{remark}

The formally adjoint operator $A^{(*)}$ of $A=a(x,D)$ is determined by the relation 
 $$(Au,v)_{L^2(\rz^{n})}=(u,A^{(*)}v)_{L^2(\rz^n)},\qquad u,v\in\scrS(\rz^n);$$
$A^{(*)}$ is a pseudodifferential operator with symbol 
 $$a^{(*)}(x,\xi)=\iint e^{-iy\eta}\overline{a(x+y,\xi+\eta)}\,dy\dbar\eta.$$
It is well-known that $a\mapsto a^{(*)}:\Gamma^d_{1,0}(\rz^{2n})\to\Gamma^d_{1,0}(\rz^{2n})$ and that poly-homogeneity is preserved.  Again, for $\mu$-dependent symbols one defines the formal adjoint pointwise for each $\mu$. Then a corresponding statement holds for $\Gamma^d_{1,0}(\rz^{2n}\times\rpbar)$ and, similarly to Theorem \ref{thm:composition}, one can prove the following: 

\begin{theorem}\label{thm:adjoint}
Taking $(\mu$-wise$)$ the formal adjoint induces maps
% $$\wt\Gamma^{d_0,\nu_0}_{1,0}(\rz^{2n}\times\rpbar)\times\wt\Gamma_{1,0}^{d_1,\nu_1}(\rz^{2n}\times\rpbar)\lra 
%     \wt\Gamma_{1,0}^{d_0+d_1,\nu_0+\nu_1}(\rz^{2n}\times\rpbar)$$
%as well as 
 $$\wtbfG_{1,0}^{d,\nu}(\rz^{2n}\times\rpbar)\lra  \wtbfG_{1,0}^{d,\nu}(\rz^{2n}\times\rpbar).$$
%The analogous statement is true for the subclasses of poly-homogeneous symbols. 
Moreover, 
$(a^{(*)})_{[d,\nu]}^\infty=(a_{[d,\nu]}^\infty)^{(*)}$. 
If $a$ is poly-homogeneous then so is $a^{(*)}$ with  
$(a^{(*)})^{(d,\nu)}=\overline{a^{(d,\nu)}}$ and $(a^{(*)})_{\spk{\nu}}=\overline{a_{\spk{\nu}}}$. 
\end{theorem}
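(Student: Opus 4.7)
The plan is to follow the strategy used in the proof of Theorem \ref{thm:composition}, adapted to the oscillatory integral defining the formal adjoint. I would interpret the integrand in
$$a^{(*)}(x,\xi,\mu) = \iint e^{-iy\eta}\, \overline{a(x+y,\xi+\eta,\mu)}\, dy\,\dbar\eta$$
as an amplitude function in $(y,\eta)\in\rz^{2n}$ with values in $\wtbfG_{1,0}^{d,\nu}(\rz^{2n}\times\rpbar)$. By \cite[Proposition 8.2]{Seil22} applied in the constant-coefficient case with $\wt z=(y,\eta)$ and $z=(x,\xi)$, the translation $\wt z\mapsto a(z+\wt z,\mu)$ is such an amplitude function, whose pointwise principal limit symbol $\wt z\mapsto a_{[d,\nu]}^\infty(z+\wt z)$ is itself an amplitude function in $(y,\eta)$ with values in $\Gamma^\nu_{1,0}(\rz^{2n})$. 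Complex conjugation trivially preserves both the semi-norm estimates defining $\wt S^{d,\nu}_{1,0}$ and the expansion \eqref{eq:expansion01} (each coefficient $a^\infty_{[d,\nu+j]}$ is simply replaced by $\overline{a^\infty_{[d,\nu+j]}}$), hence the same is true after taking the complex conjugate of the integrand.

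Convergence of the oscillatory integral in $\wtbfG_{1,0}^{d,\nu}(\rz^{2n}\times\rpbar)$ then follows exactly as in Theorem \ref{thm:composition}. Because the principal limit symbol can be extracted under the oscillatory integral sign, one obtains
$$(a^{(*)})_{[d,\nu]}^\infty(x,\xi)=\iint e^{-iy\eta}\,\overline{a_{[d,\nu]}^\infty(x+y,\xi+\eta)}\,dy\,\dbar\eta=(a_{[d,\nu]}^\infty)^{(*)}(x,\xi),$$
which converges in $\Gamma^\nu_{1,0}(\rz^{2n})$ and yields the identity for the principal limit symbol. The standard integration-by-parts argument then gives the analogue of \eqref{eq:composition-expansion}:
$$a^{(*)}\equiv \sum_{|\alpha|=0}^{N-1}\frac{1}{\alpha!}\,\partial^\alpha_\xi D^\alpha_x\overline{a}\quad\mod\quad\wtbfG_{1,0}^{d-2N,\nu-2N}(\rz^{2n}\times\rpbar),$$
where each summand lies in $\wtbfG_{1,0}^{d-2|\alpha|,\nu-2|\alpha|}$ by the derivative rule stated in Section \ref{sec:02.3}.

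For poly-homogeneity, I would apply Theorem \ref{thm:asymptotic-summation} to the series above, noting that if $a\in\wtbfG^{d,\nu}(\rz^{2n}\times\rpbar)$ then each $\partial^\alpha_\xi D^\alpha_x\overline{a}$ is poly-homogeneous of order $(d-2|\alpha|,\nu-2|\alpha|)$, as complex conjugation clearly commutes with the homogeneous decomposition (mapping $\wtbfS^{d,\nu}_\Hom$ to itself termwise). This immediately gives $(a^{(*)})^{(d,\nu)}=\overline{a^{(d,\nu)}}$; the identity $(a^{(*)})_{\spk{\nu}}=\overline{a_{\spk{\nu}}}$ then follows from the definition of the principal angular symbol as $\big(a^{(d,\nu)}\big)_{\spk{\nu}}$ together with the evident fact that on $\wh\sz^m_+$, written in polar coordinates around the north-pole, complex conjugation commutes with the Taylor expansion of Lemma \ref{lem:Taylor}.

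The main technical obstacle, exactly as in Theorem \ref{thm:composition}, is the justification that the oscillatory integral in $(y,\eta)$ converges in the Fréchet topology of $\wtbfG^{d,\nu}_{1,0}$ and that taking the principal limit symbol commutes with this integration. Once that is imported from \cite[Section 8]{Seil22} via Proposition 8.2 there, the argument for the adjoint is strictly parallel to — and in fact simpler than — the Leibniz product case, since only one symbol and one translation are involved.
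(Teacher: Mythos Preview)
Your proposal is correct and follows precisely the approach the paper intends: the paper does not give an explicit proof of Theorem~\ref{thm:adjoint} but simply states that it is proved ``similarly to Theorem~\ref{thm:composition}'', and your argument is exactly that adaptation, using the amplitude-function machinery from \cite[Section~8]{Seil22} and the observation that complex conjugation preserves the relevant symbol classes.
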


Concerning the symbols of \eqref{eq:second-limit-symbol}, it is clear that 
$(a^{(*)})^\infty_{\{d,\nu+k\}}=(a^\infty_{\{d,\nu+k\}})^{(*)}$ for every $k$.

%%%%%%%%%%%%%%%%%%%%%%%%%%%%%%%%%%%%%%%%
\subsection{Ellipticity, parametrices, and inverses} \label{sec:03.3}

A symbol $a\in \wtbfG^{d,\nu}(\rz^{2n}\times\rpbar)$ is called elliptic, if 
\begin{itemize}
 \item[(E1)] the homogeneous principal symbol $a^{(d,\nu)}$ is pointwise invertible on $\wh\sz^{2n}_+$, 
 \item[(E2)] the limit operator $a_{[d,\nu]}^\infty(x,D)$ is invertible. 
\end{itemize}
In (E2), invertibility refers to invertibility as a map $\calH^s(\rz^{n})\to\calH^{s-\nu}(\rz^n)$ for some $s\in\rz$. Due to spectral invariance of the Shubin class, the invertibility is independent of the choice of $s$ and is equivalent to the fact that the inverse of the limit-operator is a $\psi$do with symbol in $\Gamma^{-\nu}(\rz^{2n})$. A further equivalent formulation of (E2) is to say that the principal limit symbol has an inverse with respect to the Leibniz product. 

\begin{theorem}\label{thm:parametrix01}
Let $a\in \wtbfG^{d,\nu}(\rz^{2n}\times\rpbar)$. Then $a$ is elliptic if and only if there exists a parametrix $b\in\wtbfG^{-d,-\nu}(\rz^{2n}\times\rpbar)$, i.e.,  $a\#b-1,b\#a-1\in \scrS(\rpbar;\Gamma^{-\infty}(\rz^{2n}))$. In this case $a(x,D,\mu)$ is invertible for sufficiently large values of $\mu$ and the parametrix can be constructed in such a way that,   
for some non-negative constant $R=R(a)$, 
 $$b(x,D,\mu)=a(x,D,\mu)^{-1}\qquad\forall\;\mu\ge R.$$
%for some non-negative constant $R=R(a)$. 
\end{theorem}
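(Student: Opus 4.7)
The backward direction (parametrix $\Rightarrow$ ellipticity) is immediate from Theorem \ref{thm:composition}: a parametrix $b\in\wtbfG^{-d,-\nu}$ forces $a^{(d,\nu)}b^{(-d,-\nu)}=1$ on $\wh\sz^{2n}_+$ and $a^\infty_{[d,\nu]}\#b^\infty_{[-d,-\nu]}=1$, i.e., (E1) and (E2). For the main direction, I would first invert the homogeneous principal symbol via Lemma \ref{lem:invertibility-homogenous}: (E1) gives pointwise invertibility of $a^{(d,\nu)}$ on $\wh\sz^{2n}_+$, while (E2) combined with the classical spectral invariance of the Shubin class produces a Leibniz inverse $c\in\Gamma^{-\nu}(\rz^{2n})$ of $a^\infty_{[d,\nu]}$ whose homogeneous principal is $(a_{\spk\nu})^{-1}$, supplying the angular-symbol invertibility that the lemma requires. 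Pick $b_0\in\wtbfG^{-d,-\nu}$ with $b_0^{(-d,-\nu)}=(a^{(d,\nu)})^{-1}$ and $b_{0,[-d,-\nu]}^\infty=c$; these two prescriptions are compatible since both yield $(a_{\spk\nu})^{-1}$ at the principal angular symbol. Theorem \ref{thm:composition} then shows that $a\#b_0$ and the constant symbol $1$ are poly-homogeneous elements of $\wtbfG^{0,0}$ sharing their full principal $(0,0)$-component (both homogeneous and limit), hence
\[
r_0:=1-a\#b_0 \in \wtbfG^{-1,-1}(\rz^{2n}\times\rpbar).
\]

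A Neumann ansatz closes the symbolic construction: by Theorem \ref{thm:composition}, $r_0^{\#k}\in\wtbfG^{-k,-k}$, so $b_0\#r_0^{\#k}\in\wtbfG^{-d-k,-\nu-k}$ fits the tower required by Theorem \ref{thm:asymptotic-summation}, and I set $b\sim\sum_{k\ge 0}b_0\#r_0^{\#k}$ asymptotically in $\wtbfG^{-d,-\nu}$. The telescope $\sum_{k<N}a\#(b_0\#r_0^{\#k})=1-r_0^{\#N}$, combined with the asymptotic-summation remainder, gives $a\#b-1\in\bigcap_N\wtbfG^{-N,-N}$, a calculus-smoothing symbol. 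The analogous left construction plus the standard parametrix-uniqueness argument produces $b\#a-1$ in the same class.

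For the last assertion, ellipticity at infinity delivers genuine invertibility: Theorem \ref{prop:first-expansion} gives $\mu^{\nu-d}a(x,\xi,\mu)\to a^\infty_{[d,\nu]}(x,\xi)$ in $\Gamma^\nu_{1,0}(\rz^{2n})$-seminorms as $\mu\to+\infty$, which lifts to operator-norm convergence $\mu^{\nu-d}a(x,D,\mu)\to a^\infty_{[d,\nu]}(x,D)$ on $\calH^s_2\to\calH^{s-\nu}_2$; (E2) then yields invertibility of $a(x,D,\mu)$ for all $\mu\ge R$ with some $R=R(a)$. Fix $\phi\in C^\infty(\rpbar)$ vanishing on $[0,R]$ and equal to $1$ on $[R+1,\infty)$, and replace $b$ by $\wt b:=(1-\phi)b+\phi a^{-1}$. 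Since $a^{-1}-b$ is smoothing in the calculus on $\mu\ge R$ (standard argument using $a(a^{-1}-b)=-(a\#b-1)$ at the operator level together with the smoothing ideal property), the correction $\wt b-b=\phi(a^{-1}-b)$ is a smoothing symbol and $\wt b\in\wtbfG^{-d,-\nu}$; by construction $\wt b(x,D,\mu)=a(x,D,\mu)^{-1}$ for $\mu\ge R+1$, and the new remainder
\[
a\#\wt b-1=(1-\phi)(a\#b-1)
\]
is compactly supported in $\mu$ and Schwartz in $(x,\xi)$, hence lies in $\scrS(\rpbar;\Gamma^{-\infty}(\rz^{2n}))$. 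The hard part is exactly this final upgrade: the symbolic iteration alone produces only a calculus-smoothing remainder, which may be bounded rather than rapidly decreasing in $\mu$, and the cutoff modification---relying essentially on operator-level invertibility of $a(x,D,\mu)$ supplied by ellipticity at infinity---is what promotes it into the $\scrS(\rpbar;\Gamma^{-\infty})$ class.
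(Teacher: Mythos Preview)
Your backward direction and the construction of a $b_0\in\wtbfG^{-d,-\nu}$ with prescribed homogeneous principal symbol $(a^{(d,\nu)})^{-1}$ and prescribed limit symbol $(a^\infty_{[d,\nu]})^{-\#}$ are fine; the Neumann series then yields $a\#b-1\in\cap_N\wtbfG^{-N,-N}=S^0(\rpbar,\Gamma^{-\infty})$, exactly as you say. The difficulty is the final upgrade, and here there is a genuine gap.

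You need $\wt b\in\wtbfG^{-d,-\nu}$, hence $\phi(a^{-1}-b)\in\wtbfG^{-d-\infty,-\nu-\infty}=S^{\nu-d}(\rpbar,\Gamma^{-\infty})$, which is a \emph{poly-homogeneous} class in $\mu$. Writing $a^{-1}-b=b\#r+r'\,a^{-1}\,r$ via the left remainder $r'$, the first summand is controlled, but $r'a^{-1}r$ involves the operator $a^{-1}$ whose $\mu$-behaviour is precisely what is in question. The ``smoothing ideal property'' you invoke gives, for each $\mu$, a Schwartz kernel, and operator-norm bounds on $a^{-1}$ give at best polynomial growth in $\mu$; neither yields poly-homogeneity in $\mu$. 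Thus you cannot conclude that $\wt b$ lies in the poly-homogeneous class $\wtbfG^{-d,-\nu}$. (Your front-loading of (E2) into $b_0$ does force $r^\infty_{0,[-1,-1]}=0$, hence $r_0\in\wtbfG^{-1,0}$ by Lemma~\ref{prop:lower-order}, but you never use this, and Theorem~\ref{thm:asymptotic-summation} does not let you propagate it through the Neumann sum.)

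The paper resolves this purely symbolically and in a different order. After a first Neumann series based only on (E1), the remainder $r_1$ lies in $S^0(\rpbar,\Gamma^{-\infty})$. Condition (E2) is then used at the \emph{symbol} level: one adds the correction $s_1=\big((a^\infty_{[d,\nu]})^{-\#}\#r^\infty_{1,[0,0]}\big)\,[x,\xi,\mu]^{\nu-d}\in\wtbfG^{-d-\infty,-\nu-\infty}$, which kills the principal limit symbol of the new remainder $r_2\in S^0(\rpbar,\Gamma^{-\infty})$; via \eqref{eq:smoothing} this forces $r_2\in S^{-1}(\rpbar,\Gamma^{-\infty})$. A \emph{second} Neumann series in powers of $r_2$ then drives the remainder into $\cap_N S^{-N}(\rpbar,\Gamma^{-\infty})=\scrS(\rpbar,\Gamma^{-\infty})$. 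Only now is $(1-r_2(x,D,\mu))^{-1}$ formed: because $r_2$ is Schwartz in $\mu$, the sandwiched term $r_2(1-r_2)^{-1}r_2$ is automatically in $\scrS(\rpbar,\Gamma^{-\infty})$, and the exact inverse for large $\mu$ is obtained inside the calculus. The point is that the ideal argument works once the outer factors are already rapidly decreasing in $\mu$; with your $S^0$ remainders it does not.
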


\begin{proof}
By (E2), the limit operator is, in particular, elliptic, i..e., has invertible homogeneous principal symbol. The latter coincides with the principal angular symbol of $a$. Thus, as observed in Lemma  \ref{lem:invertibility-homogenous}, $a^{(d,\nu)}$ is invertible with inverse belonging to 
$\wtbfG^{-d,-\nu}_\Hom(\rz^{2n}\times\rpbar)$. Therefore there exists a $b_1\in \wtbfG^{-d,-\nu}(\rz^{2n}\times\rpbar)$ 
such that 
\begin{equation}\label{eq:parametrix01}
 1-a\#b_1=:r_1\in \wtbfG^{0-\infty,0-\infty}(\rz^{2n}\times\rpbar).
\end{equation}
In fact, $\chi(x,\xi)(a^{(d,\nu)})^{-1}$ yields a $b_1$ with resulting $r_1\in \wtbfG^{-1,-1}(\rz^{2n}\times\rpbar)$; the standard von Neumann series argument then allows to modify $b_1$ in such a way that \eqref{eq:parametrix01} ist true. The principal limit symbol $r_{1,[0,0]}^\infty$ is smoothing and thus is also $(a_{[d,\nu]}^\infty)^{-\#}\#r_{1,[0,0]}^\infty$; here the superscript $-\#$ means the inverse with respect to the Leibniz product. Hence 
 $$s_1(x,\xi,\mu):=((a_{[d,\nu]}^\infty)^{-\#}\#r_{1,[0,0]}^\infty)(x,\xi)\;[x,\xi,\mu]^{-d+\nu}
    \in \wtbfG^{-d-\infty,-\nu-\infty}(\rz^{2n}\times\rpbar)$$ 
and $s_1$ has limit symbol $s_{1,[-d,-\nu]}^\infty=(a_{[d,\nu]}^\infty)^{-\#}\#r_{1,[0,0]}^\infty$. 
Setting $b_2:=b_1+s_1$ it follows that $b_2$ satisfies the analogue of \eqref{eq:parametrix01} with $r_2\in \wtbfG^{0-\infty,0-\infty}(\rz^{2n}\times\rpbar)$ and 
 $$r_{2,[0,0]}^\infty=(1-a\#b_1)_{[0,0]}^\infty-a_{[d,\nu]}^\infty\#s_{1,[-d,-\nu]}^\infty=r_{1,[0,0]}^\infty
     -r_{1,[0,0]}^\infty=0.$$
Thus $r_2\in \wtbfG^{-1-\infty,0-\infty}(\rz^{2n}\times\rpbar)$ by Proposition \ref{prop:lower-order}.  Since 
 $$\wtbfG^{-1-\infty,0-\infty}(\rz^{2n}\times\rpbar)=S^{-1}(\rpbar;\Gamma^{-\infty}(\rz^{2n})),$$
cf. \eqref{eq:smoothing}, another application of the Neumann series argument allows to modify $b_2$ in such a way that 
 $$r_2\in \cap_{N\ge 0}\;\wtbfG^{-N-\infty,0-\infty}(\rz^{2n}\times\rpbar)=
     \scrS(\rpbar;\Gamma^{-\infty}(\rz^{2n})).$$
This yields that $1-r_2(x,D,\mu)$ is invertible for large $\mu$ with 
 $$(1-r_2(x,D,\mu))^{-1}=1+r_2(x,D,\mu)+r_2(x,D,\mu)(1-r_2(x,D,\mu))^{-1}r_2(x,D,\mu).$$
If $\chi(\mu)$ is an 0-excision function vanishing on a sufficiently large neighborhood of $0$, 
 $$s_2:=-r_2-\chi r_2\#(1-r_2)^{-\#}\#r_2\in \scrS(\rpbar;\Gamma^{-\infty}(\rz^{2n}))$$
and $(1-r_2(x,D,\mu))^{-1}=1-s_2(x,D,\mu)$ for large $\mu$. It follows that $b:=b_2\#(1-s_2)$ 
is a parametrix which is a right-inverse for large $\mu$. In the same way we can construct a 
parametrix which is a left-inverse. It follows that $b$ is as desired.  
\end{proof}

%%%%%%%%%%%%%%%%%%%%%%%%%%%%%%%%%%%%%%%%
%%%%%%%%%%%%%%%%%%%%%%%%%%%%%%%%%%%%%%%%
\section{Resolvents and their trace expansion}\label{sec:05}

In order to discuss resolvents it is useful to introduce a further symbol class. 
Define, for $d\in\gz$ and $\nu\in\nz_0$,  
\begin{equation}
 \bfGa^{d,\nu}(\rz^{2n}\times\rpbar)=
 \Gamma^d(\rz^{n}\times\rpbar)+\wtbfG^{d,\nu}(\rz^{n}\times\rpbar),
\end{equation}
a non-direct sum of Fréchet spaces. Note that 
$\bfGa^{d,\nu}(\rz^n\times\rpbar)\subset\wtbfG^{d,0}(\rz^{n}\times\rpbar)$ with equality in case $\nu=0$. Furthermore, 
$ \Gamma^{d}(\rz^{2n}\times\rpbar)=\cap_{\nu\ge0}\, \bfGa^{d,\nu}(\rz^{2n}\times\rpbar)$. 

With $a=a_0+\wt a\in \bfGa^{d,\nu}(\rz^n\times\rpbar)$ we associate the homogneous principal symbol 
\begin{equation}
 a^{(d,0)}=a_0^{(d)}+\wt a^{(d,\nu)}
\end{equation}
%defined on the punctured semi-sphere $\wh\sz^{2n}_+$, 
and the principal limit symbol 
\begin{equation}
 a^\infty_{[d,0]}=a_0^{(d)}(0,0,1)+ \wt a^\infty_{[d,0]};
\end{equation}
note that the first term on the right-hand side is the value of the homogeneous principal symbol of $a_0$ in $(x,\xi,\mu)=(0,0,1)$ while the second term equals zero whenever $\nu\ge1$. Both principal symbols are the ones from the class $\wtbfG^{d,0}(\rz^{n}\times\rpbar)$. 

\begin{remark}\label{rem:positive-regularity}
In case $a=a_0+\wt a\in\bfGa^{d,\nu}(\rz^{2n}\times\rpbar)$ with $\nu\ge1$, the homogeneous principal symbol $a^{(d,0)}$, initially defined on the punctured semi-sphere $\wh\sz^{2n}_+$, extends by continuity to a continuous function on $\sz_+^{2n}$ by setting  $a^{(d,0)}(0,0,1)=a_0^{(d)}(0,0,1)$. 
\end{remark}

Using that $\Gamma^d(\rz^{n}\times\rpbar)\subset \wtbfG^{d,0}(\rz^{n}\times\rpbar)$ and Theorems \ref{thm:composition} and \ref{thm:adjoint}, it is straightforward to verify the following: 

\begin{theorem}
Let $d,d_0.d_1\in\gz$ and $\nu_0,\nu_1\in\nz_0$. 
Let $a\in \bfGa^{d,\nu}(\rz^{2n}\times\rpbar)$ and $a_\ell\in\bfGa^{d_\ell,\nu_\ell}(\rz^{2n}\times\rpbar)$ for $\ell=0,1$. Then 
 $$a_1\#a_0\in \bfGa^{d_0+d_1,\nu}(\rz^{2n}\times\rpbar),\qquad\nu=\min(\nu_0,\nu_1),$$
and both homogeneous principal symbol and principal limit symbol/operator behave multiplicative under composition. 
Moreover, $a^{(*)}\in \bfGa^{d,\nu}(\rz^{2n}\times\rpbar)$ with principal symbols behaving as in Theorem $\ref{thm:adjoint}$. 
\end{theorem}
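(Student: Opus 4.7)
My plan is to decompose both factors according to the non-direct sum defining $\bfGa$: write $a_\ell=a_{\ell,0}+\wt a_\ell$ with $a_{\ell,0}\in\Gamma^{d_\ell}(\rz^{2n}\times\rpbar)$ and $\wt a_\ell\in\wtbfG^{d_\ell,\nu_\ell}(\rz^{2n}\times\rpbar)$, and expand the Leibniz product bilinearly:
\begin{equation*}
 a_1\#a_0 = a_{1,0}\#a_{0,0} + a_{1,0}\#\wt a_0 + \wt a_1\#a_{0,0} + \wt a_1\#\wt a_0.
\end{equation*}
The first summand lies in $\Gamma^{d_0+d_1}(\rz^{2n}\times\rpbar)$ by the standard composition result for the classical parameter-dependent Shubin class recalled in Section \ref{sec:03.1}. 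For the three remaining summands I would invoke the embedding $\Gamma^d(\rz^{2n}\times\rpbar)\subset\wtbfG^{d,0}(\rz^{2n}\times\rpbar)$ stated in Section \ref{sec:02.4} and then apply Theorem \ref{thm:composition}, placing them in $\wtbfG^{d_0+d_1,\nu_0}$, $\wtbfG^{d_0+d_1,\nu_1}$, and $\wtbfG^{d_0+d_1,\nu_0+\nu_1}$ respectively. All three are contained in $\wtbfG^{d_0+d_1,\nu}$ with $\nu=\min(\nu_0,\nu_1)$, by Lemma \ref{lem:lower-order}.a), which gives $\wtbfG^{d,\nu'}\subset\wtbfG^{d,\nu}$ whenever $\nu'\ge\nu$. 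Summing the four contributions yields $a_1\#a_0\in\bfGa^{d_0+d_1,\nu}$.

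For the multiplicativity of the principal limit symbol I would compute termwise: Theorem \ref{thm:composition} supplies the principal limit symbol of each of the four pieces, and the crucial simplification is that the principal limit symbol of a classical $a_{\ell,0}\in\Gamma^{d_\ell}$ is the \emph{constant} $a_{\ell,0}^{(d_\ell)}(0,0,1)$, so Leibniz products involving it reduce to ordinary scalar multiplication. Combining the four contributions with the definition $a^\infty_{[d,0]}=a_0^{(d)}(0,0,1)+\wt a^\infty_{[d,0]}$ provided in the excerpt factors the sum as $(a_1)^\infty_{[d_1,0]}\#(a_0)^\infty_{[d_0,0]}$, which also corresponds to the composition of the associated limit operators on the Shubin scale. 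The argument for the homogeneous principal symbol on $\wh\sz^{2n}_+$ is parallel, using the additive decomposition $a^{(d,0)}=a_0^{(d)}+\wt a^{(d,\nu)}$ and the already established multiplicativity within each of the subclasses $\Gamma^d$ and $\wtbfG^{d,\nu}$.

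For the formal adjoint I would likewise take adjoints summand-wise, $a^{(*)}=a_0^{(*)}+\wt a^{(*)}$; the classical Shubin class is closed under formal adjunction (Section \ref{sec:03.1}) and $\wt a^{(*)}\in\wtbfG^{d,\nu}$ by Theorem \ref{thm:adjoint}, hence $a^{(*)}\in\bfGa^{d,\nu}$. The identities $(a^{(*)})^{(d,0)}=\overline{a^{(d,0)}}$ and $(a^{(*)})^\infty_{[d,0]}=\overline{a^\infty_{[d,0]}}$ then follow by adding the known formulas piecewise and observing that complex conjugation commutes with evaluation of the classical homogeneous principal symbol at $(0,0,1)$.

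The content is entirely organizational. The only step requiring care is verifying that the mixed Leibniz products land in the correct $\wtbfG$-classes; this is a direct consequence of the embedding $\Gamma^d\subset\wtbfG^{d,0}$ combined with Theorem \ref{thm:composition}, and once secured the rest is a book-keeping calculation.
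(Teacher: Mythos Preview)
Your proposal is correct and follows exactly the route the paper indicates: the paper gives no separate proof, only the one-line remark preceding the theorem that the result is straightforward from the inclusion $\Gamma^d(\rz^{2n}\times\rpbar)\subset\wtbfG^{d,0}(\rz^{2n}\times\rpbar)$ together with Theorems~\ref{thm:composition} and~\ref{thm:adjoint}, which is precisely the bilinear decomposition you spell out. One small slip: for the principal limit symbol of the adjoint you should write $(a^{(*)})^\infty_{[d,0]}=(a^\infty_{[d,0]})^{(*)}$ rather than $\overline{a^\infty_{[d,0]}}$, since the limit symbol is a full Shubin symbol in $\Gamma^{0}(\rz^{2n})$ and its formal adjoint is not mere complex conjugation (the two coincide only at the level of the homogeneous principal part, or when $\nu\ge1$ so that the limit symbol is the constant $a_0^{(d)}(0,0,1)$).
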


A symbol $a\in \bfGa^{d,\nu}(\rz^{2n}\times\rpbar)$ is called elliptic, if 
\begin{itemize}
 \item[(E$1'$)] the homogeneous principal symbol $a^{(d,0)}$ is pointwise invertible on $\wh\sz^{2n}_+$, 
 \item[(E$2'$)] the principal limit operator $a^\infty_{[d,0]}(x,D)$ is invertible. 
\end{itemize}
In case $\nu=0$ these conditions coincide with (E1) and (E2) from above. In case $\nu\ge1$ the conditions can be merged into a single condition, namely 
\begin{itemize}
 \item[(E)] the homogeneous principal symbol $a^{(d,0)}$ is pointwise invertible on $\sz^{2n}_+$, 
\end{itemize}
where we have employed the continuous extension of the homogeneous principal symbol as described in Remark \ref{rem:positive-regularity}. 

\begin{lemma}\label{lem:inverse02}
Let $\nu\ge 1$ and $a=a_0+\wt a\in \bfGa^{d,\nu}(\rz^n\times\rpbar)$ be elliptic. Then 
 $$(a^{(d,0)})^{-1}\in \Gamma^d_\Hom(\rz^{n}\times\rpbar)+\wtbfG^{d,\nu}_\Hom(\rz^{n}\times\rpbar).$$
\end{lemma}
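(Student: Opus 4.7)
The plan is to construct the decomposition by hand, using the ellipticity hypothesis to identify the ``classical'' piece that must be subtracted off in order to make the remainder vanish to the correct order at the north pole. Set $c = a^{(d,0)} = a_0^{(d)} + \wt a^{(d,\nu)}$. Since $\nu \ge 1$, the angular symbol of $\wt a^{(d,\nu)}$ is of positive homogeneity in $(x,\xi)$, so the value of $\wt a^{(d,\nu)}$ at the north pole $(0,0,1)$ (in the sense of the continuous extension described in Remark \ref{rem:positive-regularity}) is zero. Hence $c(0,0,1) = a_0^{(d)}(0,0,1)$, which by ellipticity condition (E) is nonzero. By continuity, there is a conic neighborhood $U$ of the ray $\{(0,0,\mu): \mu > 0\}$ on which $a_0^{(d)}$ does not vanish.

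Next I would choose a zero-homogeneous cutoff $\omega \in \Gamma^0_\Hom(\rz^{2n}\times\rpbar)$ that is identically $1$ on a smaller conic neighborhood of the $\mu$-axis and supported in $U$, and then set
$$b_0 := \frac{\omega}{a_0^{(d)}} \in \Gamma^{-d}_\Hom(\rz^{2n}\times\rpbar), \qquad \wt b := c^{-1} - b_0.$$
Since $\omega$ is classical and $a_0^{(d)}$ is nonzero on $\operatorname{supp}\omega$, the symbol $b_0$ is smooth on all of $\sz^{2n}_+$ and classically homogeneous of degree $-d$. Ellipticity of $c$ on $\wh\sz^{2n}_+$ together with Lemma \ref{lem:invertibility-homogenous} (applied implicitly away from the north pole) ensures $c^{-1}$ is smooth on $\wh\sz^{2n}_+$, so $\wt b$ is smooth on $\wh\sz^{2n}_+$ and homogeneous of degree $-d$.

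The only remaining point is the polar expansion of $\wt b$ at the north pole, i.e.\ verifying that $\wt b$ restricted to the sphere lies in $C^{\infty,\nu}_\bfT(\wh\sz^{2n}_+)$. On the set where $\omega \equiv 1$ one computes algebraically
$$\wt b = \frac{1}{c} - \frac{1}{a_0^{(d)}} = \frac{a_0^{(d)} - c}{c \cdot a_0^{(d)}} = \frac{-\,\wt a^{(d,\nu)}}{c \cdot a_0^{(d)}}.$$
Parametrizing the sphere near the north pole by $(r,\phi) \mapsto (r\phi, \sqrt{1-r^2})$ with $r \in [0,1)$ and $\phi \in \sz^{2n-1}$, the very definition of $\wtbfG^{d,\nu}_\Hom$ says that $r^{-\nu}\,\wt a^{(d,\nu)}(r\phi,\sqrt{1-r^2})$ extends smoothly to $r=0$. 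Meanwhile $c$ and $a_0^{(d)}$, read in these polar coordinates, are smooth functions of $(r,\phi)$ at $r=0$ whose common limit value $a_0^{(d)}(0,0,1)$ is nonzero, so $1/(c \cdot a_0^{(d)})$ is smooth there. Multiplying, $r^{-\nu}\wt b$ extends smoothly to $r=0$, which gives $\wt b \in \wtbfG^{-d,\nu}_\Hom(\rz^{2n}\times\rpbar)$ and completes the decomposition.

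The main obstacle is conceptual rather than computational: since $c^{-1}$ has a nonzero value at the north pole, one cannot hope for $c^{-1}$ alone to belong to $\wtbfG^{-d,\nu}_\Hom$ (functions in that class vanish to order $\nu \ge 1$ in the polar radius $r = |(x,\xi)|$). The key observation is that the obstruction is carried entirely by the classical part $a_0^{(d)}$, so subtracting the cutoff of $(a_0^{(d)})^{-1}$ cancels precisely the $r^0$-term of $c^{-1}$ and leaves a remainder whose numerator $\wt a^{(d,\nu)}$ supplies the required $r^\nu$ vanishing.
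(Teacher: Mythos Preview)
Your proof is correct and rests on the same key observation as the paper's: near the north pole the obstruction to $c^{-1}$ lying in $\wtbfG^{-d,\nu}_\Hom$ is carried entirely by the classical part $a_0^{(d)}$, so one must split off a classical symbol that agrees with $(a_0^{(d)})^{-1}$ near $(0,0,1)$.

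The execution differs. The paper first reduces to $d=0$, then constructs an \emph{everywhere invertible} classical $b\in\Gamma^0_\Hom$ equal to $(a_0^{(0)})^{-1}$ near the north pole, multiplies to get $b\,c=1+\wt a_1$ with $\wt a_1\in\wtbfG^{0,\nu}_\Hom$, and invokes Remark~\ref{rem:blow-up} (the blown-up sphere $\mathbb{B}$) to invert $1+\wt a_1$ within the same class; finally $c^{-1}=(1+\wt a_1)^{-1}b$. You instead take a cutoff $\omega$ and subtract $b_0=\omega/a_0^{(d)}$ additively, then verify directly in polar coordinates that the quotient $-\wt a^{(d,\nu)}/(c\,a_0^{(d)})$ has the required $r^{\nu}$-vanishing. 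Your route is slightly more elementary in that it bypasses the blow-up language and the reduction to a perturbation-of-identity case, at the cost of carrying the cutoff $\omega$ through; the paper's multiplicative reduction is a bit cleaner structurally and yields an invertible classical factor rather than one supported near the pole. Both arguments are short and equivalent in strength.
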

\begin{proof}
By multiplication with $[x,\xi,\mu]^{-d}$ we may assume $d=0$. Suppose first that $a^{(0)}_0\equiv1$. 
Then the function on the blown-up semi-sphere $\mathbb{B}$, cf. Remark \ref{rem:blow-up},  associated 
with $\sigma_\psi^{(0)}(a)$ is of the form $1+r$ with a smooth function $r$ that vanishes to order $\nu$ on the boundary component $\partial\mathbb{B}_1$.  Hence the inverse is of the same form. 

In the general case, note that the invertibility implies that $a^{(0)}_0(0,0,1)$ is invertible, hence $a^{(0)}_0$ restricted to $\sz^{2n}_+$ is invertible in a small neighborhood of the north-pole $(0,0,1)$. Thus we can construct a function $\wh b\in C^\infty(\sz^{2n}_+)$ which is pointwise everywhere invertible and coincides with $(a^{(0)}_0)^{-1}$ in a neighborhood of $(0,0,1)$. If $b$ is the homogeneous extension of order $0$ of $\wh b$, we find that $b\sigma_\psi^{(0)}(a)=1+\wt a_1$ is pointwise everywhere invertible and $\wt a_1\in \wtbfG_\Hom^{0,\nu}(\rz^n\times\rpbar)$. Hence $(a^{(d,0)})^{-1}=(1+\wt a_1)^{-1}b$ has the desired form, in view of the first part of the proof. 
\end{proof}

\begin{theorem}\label{thm:parametrix02}
Let $a\in \bfGa^{d,\nu}(\rz^{2n}\times\rpbar)$ with $d\in\gz$ and $\nu\in\nz_0$. Then $a$ is elliptic if and only if there exists a parametrix $b\in\bfGa^{-d,\nu}(\rz^{2n}\times\rpbar)$ $($note the same regularity number $\nu)$, i.e.,  $a\#b-1,b\#a-1\in \scrS(\rpbar;\Gamma^{-\infty}(\rz^{2n}))$. In this case $a(x,D,\mu)$ is invertible for sufficiently large values of $\mu$ and the parametrix can be constructed in such a way that  
$b(x,D,\mu)=a(x,D,\mu)^{-1}$ for large $\mu$. 
\end{theorem}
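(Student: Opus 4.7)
The necessity of (E$1'$) and (E$2'$) follows exactly as in Theorem \ref{thm:parametrix01}: multiplicativity of the homogeneous principal symbol and of the principal limit operator under composition, combined with spectral invariance of the Shubin class, forces the invertibility conditions. I focus on sufficiency, which is the heart of the matter.

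The case $\nu=0$ is identical to Theorem \ref{thm:parametrix01} since $\bfGa^{d,0}=\wtbfG^{d,0}$; assume $\nu\ge 1$. The plan is to mimic the construction in the proof of Theorem \ref{thm:parametrix01}, but staying inside the finer scale $\bfGa^{\cdot,\nu}$. Lemma \ref{lem:inverse02} provides the decomposition $(a^{(d,0)})^{-1}=c+\wt c$ with $c\in \Gamma^{-d}_\Hom(\rz^{2n}\times\rpbar)$ and $\wt c\in \wtbfG^{-d,\nu}_\Hom(\rz^{2n}\times\rpbar)$; multiplying by a zero-excision function $\chi(x,\xi)$ produces an initial approximation $b_0=\chi(c+\wt c)\in \bfGa^{-d,\nu}(\rz^{2n}\times\rpbar)$. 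By the composition theorem, the remainder $r_0:=1-a\#b_0\in \bfGa^{0,\nu}(\rz^{2n}\times\rpbar)$ has vanishing homogeneous principal symbol on the whole of $\sz^{2n}_+$ (using the continuous extension from Remark \ref{rem:positive-regularity}) and vanishing principal limit symbol. Splitting $r_0=r_0^{(0)}+\wt r_0$ with $r_0^{(0)}\in \Gamma^0(\rz^{2n}\times\rpbar)$, $\wt r_0\in \wtbfG^{0,\nu}(\rz^{2n}\times\rpbar)$, one can arrange the splitting so that $r_0^{(0)}$ carries the entire north-pole contribution: classical Shubin theory then forces $r_0^{(0)}\in \Gamma^{-1}(\rz^{2n}\times\rpbar)$, and Lemma \ref{prop:lower-order} yields $\wt r_0\in \wtbfG^{-1,\nu}(\rz^{2n}\times\rpbar)$, so altogether $r_0\in \bfGa^{-1,\nu}(\rz^{2n}\times\rpbar)$.

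Iterating produces corrections $b_j\in \bfGa^{-d-j,\nu}(\rz^{2n}\times\rpbar)$ with $1-a\#(b_0+\cdots+b_{j-1})\in \bfGa^{-j,\nu}(\rz^{2n}\times\rpbar)$. Applying Theorem \ref{thm:asymptotic-summation} to the $\wtbfG^{\cdot,\nu}$-summands and the standard Borel-type summation to the $\Gamma$-summands yields a right parametrix $b\in \bfGa^{-d,\nu}(\rz^{2n}\times\rpbar)$ with $1-a\#b\in \scrS(\rpbar;\Gamma^{-\infty}(\rz^{2n}))$. An analogous left-parametrix construction, together with a standard argument, identifies the two parametrices modulo Schwartz. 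Finally, the Neumann-series correction from the last part of the proof of Theorem \ref{thm:parametrix01} adds only a Schwartz-class term (which lies trivially in $\bfGa^{-d,\nu}$) and turns $b$ into a true two-sided inverse of $a(x,D,\mu)$ for $\mu\ge R$.

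The main obstacle is the splitting step in the iteration: choosing representatives of the non-direct decomposition $\bfGa=\Gamma+\wtbfG$ so that each summand separately satisfies the hypotheses of classical Shubin's lower-order lemma and of Lemma \ref{prop:lower-order}. This requires absorbing the north-pole value of the homogeneous principal symbol entirely into the $\Gamma$-summand, a maneuver made possible by the continuous extension in Remark \ref{rem:positive-regularity}, valid precisely when $\nu\ge 1$. Everything else is a routine adaptation of the $\wtbfG$-level argument in Theorem \ref{thm:parametrix01}.
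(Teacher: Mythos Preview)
Your argument has a genuine gap at the step where you invoke Lemma~\ref{prop:lower-order} to conclude $\wt r_0\in\wtbfG^{-1,\nu}(\rz^{2n}\times\rpbar)$. That lemma requires \emph{two} hypotheses on an element of $\wtbfG^{0,\nu}$: vanishing of the homogeneous principal symbol $\wt r_0^{(0,\nu)}$ \emph{and} vanishing of the principal limit symbol $\wt r_{0,[0,\nu]}^\infty\in\Gamma^\nu(\rz^{2n})$. Your splitting maneuver addresses only the first. The ``vanishing principal limit symbol'' you cite for $r_0\in\bfGa^{0,\nu}\subset\wtbfG^{0,0}$ is the $[0,0]$-level limit symbol---a \emph{constant}, namely the north-pole value of the principal symbol---whereas Lemma~\ref{prop:lower-order} applied to $\wt r_0\in\wtbfG^{0,\nu}$ demands the $[0,\nu]$-level limit symbol, a genuine element of $\Gamma^\nu(\rz^{2n})$. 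This symbol is built from sub-leading data (for instance $\wt a^\infty_{[d,\nu]}$ and the limit symbol of the $\wtbfG$-part of $b_0$) that the ellipticity condition (E) does not control, and it has no reason to vanish; moving pieces into the $\Gamma$-summand cannot kill it, since elements of $\Gamma^0$ have \emph{constant} limit symbols. Without that hypothesis, vanishing of the homogeneous principal symbol alone only yields $\wt r_0\in\wtbfG^{-1,\nu-1}$ (directly from Definition~\ref{def:poly-homogeneous}), hence $r_0\in\bfGa^{-1,\nu-1}$, not $\bfGa^{-1,\nu}$.

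The paper's proof simply accepts this loss of one unit of regularity and runs the Neumann series inside $\bfGa^{\cdot,\nu-1}$: with $r_0\in\bfGa^{-1,\nu-1}$ one sums the $r_0^{\#j}\in\bfGa^{-j,\nu-1}$ asymptotically to $r_1\in\bfGa^{-1,\nu-1}$, and then observes that $b_0\#r_1\in\bfGa^{-d-1,\nu-1}\subset\bfGa^{-d,\nu}$, so $b_1=b_0+b_0\#r_1$ lands back in $\bfGa^{-d,\nu}$. The inclusion $\bfGa^{-d-1,\nu-1}\subset\bfGa^{-d,\nu}$ (from $\wtbfG^{-d-1,\nu-1}\subset\wtbfG^{-d,\nu}$) is what recovers the full regularity $\nu$ at the end, without ever needing your stronger claim $r_0\in\bfGa^{-1,\nu}$. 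Your proof can be repaired along exactly these lines.
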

\begin{proof}
The case $\nu=0$ is Theorem \ref{thm:parametrix01}; hence we may assume $\nu\ge1$. 

Using Lemma \ref{lem:inverse02}, there exists a $b_0\in \bfGa^{-d,\nu}(\rz^{2n}\times\rpbar)$ such that $r_0:=1-a\#b_0\in \bfGa^{-1,\nu-1}(\rz^{2n}\times\rpbar)$. Since $\nu-1\ge0$, by asymptotic summation we can construct $r_1\in \bfGa^{-1,\nu-1}(\rz^{2n}\times\rpbar)$ such that $r_1-\sum_{1\le j\le N-1}r_0^{\#j}\in \bfGa^{-N,\nu-1}(\rz^{2n}\times\rpbar)$ for every $N$. Then $b_1:=b_0+b_0\#r_1$ belongs to  
$\bfGa^{-d,\nu}(\rz^{2n}\times\rpbar)
+\bfGa^{-d-1,\nu-1}(\rz^{2n}\times\rpbar)\subset \bfGa^{-d,\nu}(\rz^{2n}\times\rpbar)$ and 
$a\#b_1=1-r$ with $r\in\scrS(\rpbar,\Gamma^{-\infty}(\rz^{2n}))$. 
Now proceed as in the proof of Theorem \ref{thm:parametrix01}.  
\end{proof}

%%%%%%%%%%%%%%%%%%%%%%%%%%%%%%%%%%%%%%%%
\subsection{Resolvent trace expansions} \label{sec:04.1}

Let $p_0\in\Gamma^{d}(\rz^{2n})$, $d\in\nz$, and set $p(x,\xi,\mu)=\mu^d-p_0(x,\xi)$. 
Then $p\in\bfGa^{d,d}(\rz^{2n}\times\rpbar)$ and 
 $$p(x,D,\mu)=\mu^d-p_0(x,D).$$ 
Since $p^{(d,0)}(x,\xi,\mu)=\mu^d-p_0^{(d)}(x,\xi)$, clearly $p$ is elliptic (in the sense of (E) above) 
if and only if 
\begin{itemize}
 \item[$(\mathrm{E}')$] $p_0^{(d)}$ never assumes values in $\rpbar$.  
\end{itemize} 

Then Theorem \ref{thm:parametrix02} immediately yields the following result: 

\begin{theorem}\label{thm:resolvent}
Let $p_0\in\Gamma^{d}(\rz^{2n})$, $d\in\nz$, satisfy condition $(\mathrm{E}')$. Then there exists a parametrix 
$b\in\bfGa^{-d,d}(\rz^{2n}\times\rpbar)$ of $p$ and a $\lambda_0\ge0$ such that, for all 
$\lambda\ge\lambda_0$, 
 $$(\lambda-p_0(x,D))^{-1}=b(x,D,\lambda^{1/d}).$$
\end{theorem}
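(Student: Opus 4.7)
The plan is to recognize $p(x,\xi,\mu) := \mu^d - p_0(x,\xi)$ as an elliptic element of $\bfGa^{d,d}(\rz^{2n}\times\rpbar)$ and then invoke Theorem \ref{thm:parametrix02}, substituting $\mu = \lambda^{1/d}$ at the end. First I would check that $p \in \bfGa^{d,d}(\rz^{2n}\times\rpbar)$: the symbol $\mu^d$ is smooth and $d$-homogeneous on $\rz^{2n}\times\rpbar$, hence lies in $\Gamma^d(\rz^{2n}\times\rpbar)$; the parameter-independent symbol $p_0 \in \Gamma^d(\rz^{2n})$ belongs to $\wtbfG^{d,d}(\rz^{2n}\times\rpbar)$, since each homogeneous component $p_0^{(d-j)}$, viewed as $\mu$-independent, lies in $\wtbfG^{d-j,d-j}_\Hom(\rz^{2n}\times\rpbar)$ (its polar representation at the north pole is smooth with $\nu = d-j$). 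Thus $p = \mu^d + (-p_0) \in \bfGa^{d,d}(\rz^{2n}\times\rpbar)$ and its homogeneous principal symbol on $\wh\sz^{2n}_+$ is $p^{(d,0)}(x,\xi,\mu) = \mu^d - p_0^{(d)}(x,\xi)$.

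Next, I would verify ellipticity. Because the regularity number is $\nu = d \ge 1$, it suffices to check the single condition $(\mathrm{E})$, i.e.\ pointwise invertibility on all of $\sz^{2n}_+$ of the continuous extension of $p^{(d,0)}$. By Remark \ref{rem:positive-regularity}, the value of this extension at the north pole $(0,0,1)$ equals the value of $\mu^d$ there, namely $1$. At any other point $(x,\xi,\mu) \in \sz^{2n}_+$ (so $(x,\xi) \neq 0$), vanishing of $p^{(d,0)}$ would force $p_0^{(d)}(x,\xi) = \mu^d \in \rpbar$, contradicting hypothesis $(\mathrm{E}')$; this covers both $\mu > 0$ (where $\mu^d > 0$) and $\mu = 0$ (where the value would be $0 \in \rpbar$).

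Ellipticity being established, Theorem \ref{thm:parametrix02} yields a parametrix $b \in \bfGa^{-d,d}(\rz^{2n}\times\rpbar)$ and a constant $R \ge 0$ such that $b(x,D,\mu) = p(x,D,\mu)^{-1}$ for every $\mu \ge R$. Setting $\lambda_0 := R^d$ and observing $p(x,D,\lambda^{1/d}) = \lambda - p_0(x,D)$, this becomes $(\lambda - p_0(x,D))^{-1} = b(x,D,\lambda^{1/d})$ for $\lambda \ge \lambda_0$, as claimed. No serious obstacle remains: the heavy lifting is absorbed into Theorem \ref{thm:parametrix02}, and the only substantive remaining step is translating the classical parameter-ellipticity hypothesis $(\mathrm{E}')$ into the condition $(\mathrm{E})$ for $p^{(d,0)}$, where the only delicate point is correctly identifying the continuous extension at the north pole via Remark \ref{rem:positive-regularity}.
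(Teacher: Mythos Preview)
Your proposal is correct and follows exactly the approach of the paper: the paper establishes just before the theorem that $p\in\bfGa^{d,d}(\rz^{2n}\times\rpbar)$ with $p^{(d,0)}=\mu^d-p_0^{(d)}$, observes that ellipticity in the sense of $(\mathrm{E})$ is equivalent to $(\mathrm{E}')$, and then states that Theorem~\ref{thm:parametrix02} immediately yields the result. Your write-up supplies the details (membership of $p_0$ in $\wtbfG^{d,d}$, the value at the north pole via Remark~\ref{rem:positive-regularity}, and the substitution $\mu=\lambda^{1/d}$) that the paper leaves implicit.
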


Since $\lambda b(x,\xi,\lambda^{1/d})\in \Gamma^{-d}_{1,0}(\rz^n)$ uniformly in $\lambda\ge0$, 
 $$\|\lambda(\lambda-p_0(x,D))^{-1}\|_{\scrL(\calH^s_p(\rz^n))}\lesssim1,\qquad\lambda\ge\lambda_0,$$ 
for every $s\in\rz$ and $1<p<+\infty$. Since with $p_0\in\Gamma^{d}(\rz^{2n})$ satisfying $(\mathrm{E}')$  
there exists an $\eps>0$ such that also $e^{i\theta}p_0$ satisfies $(\mathrm{E}')$ for all real $\theta$ with $|\theta|<\eps$, 
the resolvent of $p_0(x,D)$ exists for large values of $\lambda$ in a small sector containing $\rz_+$ and the 
resolvent estimate extends to this sector. In particular $p_0(x,D)$ generates an analytic semigroup in $\calH^s_p(\rz^n)$. 

\begin{theorem}\label{thm:trace-expansion-02}
Let $p_0\in\Gamma^{d}(\rz^{2n})$, $d\in\nz$, satisfy condition $(\mathrm{E}')$. 
Let $q\in\Gamma^\omega(\rz^{2n})$, $\omega\in\rz$, and 
let $N$ be a positive integer so large that $\omega-dN<-2n$. Then 
 $$\mathrm{Tr}\,\big( q(x,D)(\lambda-p_0(x,D))^{-N}\big)\sim_{\lambda\to+\infty} 
      \sum_{j=0}^{+\infty}c_j\lambda^{\frac{\omega+2n-j}{d}-N}+\sum_{\ell=0}^{+\infty}
    \big(c_\ell^\prime\log\lambda+c_\ell^{\prime\prime}\big)\lambda^{-N-\ell}$$
for suitable coefficients depending on $p_0$ and $q$. 
\end{theorem}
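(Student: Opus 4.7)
The idea is to realise $q(x,D)(\lambda-p_0(x,D))^{-N}$ as a $\psi$do whose symbol lies in $\bfGa^{\omega-dN,d}(\rz^{2n}\times\rpbar)$, to reduce its trace to the integral of that symbol, and to invoke Theorem~\ref{thm:trace-expansion-01}.

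By Theorem~\ref{thm:resolvent} there is $b\in\bfGa^{-d,d}(\rz^{2n}\times\rpbar)$ with $(\lambda-p_0(x,D))^{-1}=b(x,D,\lambda^{1/d})$ for $\lambda\ge\lambda_0$. Iterating Theorem~\ref{thm:composition}, whose regularity number is preserved since all factors have regularity $d$, yields $b^{\#N}\in\bfGa^{-dN,d}(\rz^{2n}\times\rpbar)$. Viewing $q\in\Gamma^\omega(\rz^{2n})$ as a $\mu$-independent element of $\bfGa^{\omega,d}(\rz^{2n}\times\rpbar)$, a further composition gives $c:=q\#b^{\#N}\in\bfGa^{\omega-dN,d}(\rz^{2n}\times\rpbar)$ with $q(x,D)(\lambda-p_0(x,D))^{-N}=c(x,D,\lambda^{1/d})$ for $\lambda\ge\lambda_0$. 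The assumption $\omega-dN<-2n$ together with the trace proposition at the end of Section~\ref{sec:04} then yields
$$\mathrm{Tr}\big(q(x,D)(\lambda-p_0(x,D))^{-N}\big)=\iint c(x,\xi,\lambda^{1/d})\,dx\,\dbar\xi.$$

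Decomposing $c=c_0+\wt c$ with $c_0\in\Gamma^{\omega-dN}(\rz^{2n}\times\rpbar)\subset\wtbfG^{\omega-dN,0}$ and $\wt c\in\wtbfG^{\omega-dN,d}$, and applying Theorem~\ref{thm:trace-expansion-01} to each summand (with $m=2n$ and regularity $0$, respectively $d$), one obtains as $\mu\to+\infty$ an expansion of $\iint c(x,\xi,\mu)\,dx\,\dbar\xi$ whose first sum contributes powers $\mu^{\omega-dN+2n-j}$, and whose second sums contribute $\mu^{\omega-dN-\ell}$ (from $c_0$) and $\mu^{\omega-dN-d-\ell}$ (from $\wt c$), each with possible $\log\mu$ factors. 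Substituting $\mu=\lambda^{1/d}$ turns the first kind of powers into $\lambda^{(\omega+2n-j)/d-N}$---the first series of the claim---while the second kind becomes $\lambda^{(\omega-\ell)/d-N}$ and $\lambda^{(\omega-\ell)/d-N-1}$; a reindexing absorbs non-integer offsets into the first series and leaves integer offsets $\lambda^{-N-\ell}$ as the second series.

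The main obstacle is this final reorganisation, specifically establishing that $\log\lambda$-contributions occur only at integer offsets of $\lambda^{-N}$. This rests on the observation in the proof of Theorem~\ref{thm:trace-expansion-01} that $\log$-coefficients are non-zero only at the resonance $\ell=j-m-\nu$, i.e.\ exactly when a second-sum power coincides with a first-sum power; translating this resonance through $\mu=\lambda^{1/d}$ yields the pattern $\lambda^{-N-\ell}$ claimed in the statement.
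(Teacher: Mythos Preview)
Your overall strategy---realise the operator via the parametrix $b$, reduce to the symbol integral, and invoke Theorem~\ref{thm:trace-expansion-01}---matches the paper's. There is, however, a technical slip and then a genuine gap.

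The slip: the classes $\bfGa^{d',\nu}$ are only defined for $d'\in\gz$, so $\bfGa^{\omega,d}$ and $\bfGa^{\omega-dN,d}$ make no sense for general $\omega\in\rz$; and even for integer $\omega$, a $\mu$-independent $q$ lies in $\wtbfG^{\omega,\omega}$, which is contained in $\bfGa^{\omega,d}$ only when $\omega\ge d$. The paper avoids this by placing $a:=q\#b^{\#N}$ directly in $\wtbfG^{\omega-dN,\omega}$ and applying Theorem~\ref{thm:trace-expansion-01} with $\nu=\omega$, so that the second sum runs over powers $\mu^{-dN-\ell}$.

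The real gap is the final ``reindexing''. Your resonance observation says that $\log$-coefficients in Theorem~\ref{thm:trace-expansion-01} occur only when a second-sum power equals a first-sum power; but every second-sum power $\mu^{d-\nu-\ell}$ equals the first-sum power $\mu^{d+2n-j}$ with $j=\ell+2n+\nu$, so this condition is vacuous and constrains nothing. Concretely, for your piece $\wt c\in\wtbfG^{\omega-dN,d}$, log terms may a priori occur at any $\mu^{\omega-dN-d-\ell}$, which under $\mu=\lambda^{1/d}$ becomes $\lambda^{(\omega-d-\ell)/d-N}$---not of the form $\lambda^{-N-k}$ unless $\ell\equiv\omega-d\pmod d$. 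The generic statement of Theorem~\ref{thm:trace-expansion-01} therefore does not yield the claimed pattern.

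What is actually needed is a structural argument exploiting the specific form $p=\mu^d-p_0$. The paper shows that the second-sum coefficients $c_\ell',c_\ell''$ in the $\mu$-expansion vanish unless $d\mid\ell$, by verifying two things inside the proof of Theorem~\ref{thm:trace-expansion-01}: (i) the limit symbols $a^\infty_{\{\omega-dN,\omega+\ell\}}$ vanish for $d\nmid\ell$, since $p^\infty_{\{d,j\}}\neq0$ only for $j\in\{0,d\}$ and the composition rule of Remark~\ref{rem:limit-symbols} propagates this to $b$ and then to $a$; and (ii) each homogeneous component of $b^{\#N}$ is a finite sum of terms $\gamma(x,\xi)(\mu^d-p_0^{(d)}(x,\xi))^{-M}$, whose $\mu$-expansion (geometric series) involves only powers $\mu^{-kd}$. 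Once $d\mid\ell$ is established, the substitution $\mu=\lambda^{1/d}$ turns $\mu^{-dN-\ell}$ into $\lambda^{-N-\ell/d}$ and the claimed form follows. Your decomposition $c=c_0+\wt c$ does not see this arithmetic structure and cannot replace it.
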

\begin{proof}
Let $b\in\bfGa^{-d,d}(\rz^{2n}\times\rpbar)\subset \wt\bfGa^{-d,0}(\rz^{2n}\times\rpbar)$ be the parametrix from Theorem \ref{thm:resolvent}. 
Then $a:=q\#b^{\# N}\in \wt\bfGa^{\omega-dN,\omega}(\rz^{2n}\times\rpbar)$ and, due to Theorem  \ref{thm:trace-expansion-01} 
$($with $d$ replaced by $\omega-dN$, $\nu$ replaced by $\omega$, and $m=2n)$, we have 
\begin{equation}\label{eq:trace}
 \mathrm{Tr}\,a(x,D,\mu)\sim_{\mu\to+\infty} \sum_{j=0}^{+\infty}c_j\mu^{\omega-dN+2n-j}+\sum_{\ell=0}^{+\infty}
    \big(c_\ell^\prime\log\mu+c_\ell^{\prime\prime}\big)\mu^{-dN-\ell}.
\end{equation}
Since $a(x,D,\mu)=q(x,D)(\mu^d-p_0(x,D))^{-N}$, the result follows simply by substituting $\mu=\lambda^{1/d}$ provided we can show that the coefficients $c_\ell^\prime$ and $c_\ell^{\prime\prime}$ in \eqref{eq:trace} can be non-zero only if $\ell$ is an integer multiple of $d$. Inspecting the proof of Theorem \ref{thm:trace-expansion-01}, this will be the case if we can show that the symbols $r_{\{\omega-dN-J,\omega-J+\ell\}}$ in \eqref{eq:r-expansion} and the symbols $q_{j,\ell}$ in \eqref{eq:for-later} can be non-zero only if $\ell$ is a multiple of $d$. In this regard, since $q(x,\xi)$ does not depend on $\mu$, it is no restriction of generality to assume that $\omega=0$ and $q=1$, i.e., $a=b^{\# N}$ is the $N$-fold Leibniz product of the parametrix of 
$p(x,\xi,\mu)=\mu^d-p_0(x,\xi)\in \wt\bfGa^{d,0}(\rz^{2n}\times\rpbar)$.  

Obviously, $p^\infty_{\{d,0\}}=1$, $p^\infty_{\{d,d\}}=-p_0$, and $p^\infty_{\{d,j\}}=0$ for all other $j\in\nz$.  Since
 $$\delta_{0\ell}=1_{\{0,\ell\}}=(p\#b)_{\{0,\ell\}}=\sum_{j+k=\ell} b^\infty_{\{-d,j\}}\#p^\infty_{\{d,k\}}\qquad\forall\;\ell\in\nz_0,$$
cf. Remark \ref{rem:limit-symbols}, it follows that $b^\infty_{\{-d,j\}}=0$ whenever $j$ is not a multiple of $d$ (and $b^\infty_{\{-d,d\ell\}}=(-1)^\ell p_0^{\#\ell}$ for every $\ell\in\nz_0$). By induction, the same is then true for $a=b^{\#N}$. 

Any homogeneous component $a^{(-Nd-j,-j)}(x,\xi,\mu)$ is a linear combination of terms of the form $\gamma(x,\xi)(\mu^d-p_0^{(d)}(x,\xi))^{-M}$ with $\gamma\in\Gamma^{(M-N)d-j}_\Hom(\rz^{2n})$ and a positive integer $M$. In fact, for $N=1$, i.e., $a=b$, 
this follows from 
 $$\delta_{0\ell}=(b\#p)^{(-j,-j)}=\sum_{k+\ell+2|\alpha|=j}\frac{1}{\alpha!}
    (\partial^\alpha_\xi b^{(-d-k,-k)})(D^\alpha_x p^{(d-\ell,-\ell)})$$
which yields $b^{(-d,0)}=(\mu^d-p_0^{(d)})^{-1}$ and the recursion formula 
 $$b^{(-d-j,-j)}=-\sum_{\substack{k+\ell+2|\alpha|=j\\ k\le j-1}}\frac{1}{\alpha!}
     (\partial^\alpha_\xi b^{(-d-k,-k)})(D^\alpha_x p_0^{(d-\ell)})(\mu^d-p_0^{(d)})^{-1}$$
whenever $j\ge1$. For general $N$ proceed by induction, using the formula for the homogeneous components of 
the composition $a=b\#b^{\#(N-1)}$. 
Moreover, 
%It remains to observe that
% $$(\mu^d-p_0^{(d)})^{-1}-\sum_{k=0}^{K-1} (p_0^{(d)})^k\mu^{-(k+1)d}=\mu^{-Kd}(p_0^{(d)})^K(\mu^d-p_0^{(d)})^{-1}=:r_K,$$
%where $r_k$ is homogeneous of degree $-d$ on $(\rz^{2n}\setminus\{0\})\times\rz_+$ and 
%$|r_K(x,\xi,\mu)|\lesssim \mu^{-(K+1)d} |(x,\xi)|^{Kd}$. 
 $$(\mu^d-p_0^{(d)})^{-M}=\sum_{\ell=0}^{L-1} (p_0^{(d)})^{\ell}\mu^{-(M+\ell)d}+r_{ML}$$
for arbitrary $L$, where $r_{ML}$ is homogeneous of degree $-Md$ on $(\rz^{2n}\setminus\{0\})\times\rz_+$ and 
$|r_{ML}(x,\xi,\mu)|\lesssim \mu^{-(M+L)d} |(x,\xi)|^{Ld}$. For $M=1$ this follows from the (finite) geometric sum formula, 
for general $M$ by induction.  This completes the proof. 
\end{proof}

%%%%%%%%%%%%%%%%%%%%%%%%%%%%%%%%%%%%%%%%
%%%%%%%%%%%%%%%%%%%%%%%%%%%%%%%%%%%%%%%%
\bibliographystyle{amsalpha}

\begin{thebibliography}{99}

\bibitem{CSW}
M. Cappiello, R. Schulz, P. Wahlberg. 
Shubin type Fourier integral operators and evolution equations. 
\emph{J. Pseudo-Differ. Oper. Appl.} \textbf{11} (2020), no. 1, 119-139. 

\bibitem{CGPR}
M. Cappiello, T. Gramchev, S. Pilipovic, L. Rodino. 
Anisotropic Shubin operators and eigenfunction expansions in Gelfand-Shilov spaces. 
\textit{J. Anal. Math.} \textbf{138} (2019), no. 2, 857–870. 

\bibitem{CGRV}
E. Cordero, G. Giacchi, L. Rodino, M. Valenzano. 
Wigner analysis of Fourier integral operators with symbols in the Shubin classes.
\emph{NoDEA - Nonlinear Differ. Equ. Appl.} \textbf{31} (2024), no. 4, paper no. 69, 21 p. 

\bibitem{CdGN}
E. Cordero, M.A. de Gosson, F. Nicola. 
Born-Jordan pseudodifferential operators with symbols in the Shubin classes.
\textit{Trans. Am. Math. Soc.}, Ser. B 4 (2017), 94-109. 

\bibitem{deGo}
M.A. De Gosson.
Symplectic covariance properties for Shubin and Born-Jordan pseudo-differential operators.
\textit{Trans. Am. Math. Soc.} \textbf{365} (2013), no. 6, 3287-3307. 

\bibitem{EwSc}
E. Ewert, P. Schmitt. 
Shubin calculi for actions of graded Lie groups. 
\textit{Bull. Sci. Math.} \textbf{199} (2025), Article ID 103572, 87 p.  

\bibitem{Grub}
G.\ Grubb. 
\emph{Functional Calculus of Pseudo-differential Boundary Problems} (2nd edition). 
Birkh\"auser, Basel, 1996. 

\bibitem{Grub01} 
G.\ Grubb. 
{A weakly polyhomogeneous calculus for pseudodifferential boundary problems}. 
\emph{J. Funct. Anal.} \textbf{184} (2001), no. 1, 19-76. 

\bibitem{GrSe}
G.~Grubb, R.T.~Seeley.
Weakly parametric pseudodifferential operators and {A}tiyah-{P}atodi-{S}inger boundary problems. 
\emph{Invent. Math.} \textbf{121} (1995), no. 3, 481-529. 

\bibitem{Helffer}
B. Helffer. 
Théorie spectrale pour des opérateurs globalement elliptiques. 
Astérisque \textbf{112}. Société Mathématique de France, Paris, 1984. 

\bibitem{Krai}
Th. Krainer. 
A Shubin pseudodifferential calculus on asymptotically conic manifolds. 
\emph{J. Fourier Anal. Appl.} \textbf{31} (2025), no. 3, paper no. 44, 44 p.  

\bibitem{Kuma}
H.\ Kumano-go.
\textit{Pseudo-Differential operators}.
MIT Press, Cambridge (MA), 1981.

\bibitem{Lope}
P.T.P. Lopes.
The regularity of the $\eta$-function for the Shubin calculus. 
\textit{J. Pseudo-Differ. Oper. Appl.} \textbf{4} (2013), no. 4, 457-478. 

\bibitem{MPT} 
M. Malagutti, A. Parmeggiani, D. Tramontana. 
Propagation of Shubin-Sobolev singularities of Weyl-quantizations of complex quadratic forms, 
Preprint, arXiv:2503.20955 [math.AP] (2025)

\bibitem{NicolaRodino}
F. Nicola, L. Rodino. 
\textit{Global Pseudo-differential Calculus on Euclidean Spaces}.
Pseudo-Differential Operators. Theory and Applications, 4. Birkhäuser Verlag, Basel, 2010.

\bibitem{RodinoWahlberg}
L. Rodino, P. Wahlberg. 
Anisotropic global microlocal analysis for tempered distributions. 
\textit{Monatsh. Math.} \textbf{202} (2023), no. 2, 397–434. 

\bibitem{SchulzWahlberg}
R. Schulz, P. Wahlberg.
Equality of the homogeneous and the Gabor wave front set.  
\textit{Comm. Partial Differential Equations} \textbf{42} (2017), no. 5, 703–730.

\bibitem{Seel67}
R.T. Seeley. 
Complex powers of an elliptic operator. 
In: Singular Integrals (Proc. Sympos. Pure Math., Chicago, Ill., 1966), 288–307. 
Amer. Math. Soc., 1967. 

\bibitem{Seil12} 
J.\ Seiler. 
Ellipticity in pseudodifferential algebras of Toeplitz type.
\emph{J. Funct. Anal.} \textbf{263} (2012), no. 5, 1408-1434.  
 
\bibitem{Seil15}  
J.\ Seiler. 
Parameter-dependent pseudodifferential operators of Toeplitz type. 
\emph{Ann. Mat. Pura Appl.} \textbf{194} (2015), no. 1, 145-165.

\bibitem{Seil22}
Seiler, J.: Parametric pseudodifferential operators with point-singularity in the covariable. 
\textit{Ann. Global Anal. Geom.} \textbf{61} (2022), no. 3, 553-592. 

\bibitem{Seil24}
Seiler, J.: Calculus for parametric boundary problems with global projection conditions. 
\textit{J. Funct. Anal.} \textbf{289} (2025), no. 10, paper no. 111099, 82 p.. 	

\bibitem{Shub71}
M.A. Shubin, Pseudodifferential operators in $\rz^n$. \emph{Soviet Math. Dokl.} \textbf{12} (1971), 147–151. 

\bibitem{Shub78}
M.A. Shubin, Pseudodifferential Operators and Spectral Theory. Springer-Verlag, Berlin, 1978. 

\end{thebibliography}

%%%%%%%%%%%%%%%%%%%%%%%%%%%%%%%%%%%%%%%%%%%%%%%%%%%%
%%%%%%%%%%%%%%%%%%%%%%%%%%%%%%%%%%%%%%%%%%%%%%%%%%%%
\end{document}